\numberwithin{equation}{section}
\numberwithin{figure}{section}
  \theoremstyle{definition}
  \newtheorem{defn}{\protect\definitionname}[section]
  \theoremstyle{plain}
  \newtheorem{thm}{\protect\theoremname}[section]
  \theoremstyle{definition}
  \newtheorem{example}{\protect\examplename}[section]
  \theoremstyle{plain}
  \newtheorem{prop}{\protect\propositionname}[section]
  \theoremstyle{remark}
  \newtheorem{rem}{\protect\remarkname}[section]
  \theoremstyle{plain}
  \newtheorem{cor}{\protect\corollaryname}[section]
  \theoremstyle{plain}
  \newtheorem{lem}{\protect\lemmaname}[section]
\DeclareSymbolFont{yhlargesymbols}{OMX}{yhex}{m}{n}
\DeclareMathAccent{\wideparen}{\mathord}{yhlargesymbols}{"F3}
  \providecommand{\definitionname}{Definition}
  \providecommand{\examplename}{Example}
  \providecommand{\lemmaname}{Lemma}
  \providecommand{\propositionname}{Proposition}
  \providecommand{\remarkname}{Remark}
\providecommand{\corollaryname}{Corollary}
\providecommand{\theoremname}{Theorem}
\begin{document}

\title{Reconstruction for the Signature of a Rough Path}

\author{Xi Geng%
\thanks{Department of Mathematical Sciences, Carnegie Mellon University, Pittsburgh, PA 15213, United States. Email: xig@andrew.cmu.edu. %
}}
\date{}

\maketitle
\begin{abstract}
Recently it was proved that the group of rough paths modulo tree-like
equivalence is isomorphic to the corresponding signature group through
the signature map $S$ (a generalized notion of taking iterated path
integrals). However, the proof of this uniqueness result does not
contain any information on how to ``see'' the trajectory of a (tree-reduced)
rough path from its signature, and a constructive understanding on
the uniqueness result (in particular on the inverse of $S$) has become
an interesting and important question. The aim of the present paper
is to reconstruct a rough path from its signature in an explicit and
universal way.
\end{abstract}

\smallskip
\noindent$\mathrm{MSC\ 2010:}$ 34F05(primary), 60G17, 60H10(secondary)

\smallskip
\noindent $\mathrm{Keywords:}$ Rough paths; Uniqueness of signature; Reconstruction

\section{Introduction}

Motivated from the study of homotopy theory and cohomology of loop
spaces, in 1954 K.T. Chen \cite{Chen1954} introduced the powerful
tool of iterated path integrals. In particular, he observed that a
continuous path with bounded variation can be represented by a fully
non-commutative power series through an exponential homomorphism.
In terms of tensor products, this is equivalent to saying that the
integration map $S$, which sends a path $x$ in $\mathbb{R}^{d}$
to the formal tensor series
\[
1+\sum_{n=1}^{\infty}\int_{0<t_{1}<\cdots<t_{n}<1}dx_{t_{1}}\otimes\cdots\otimes dx_{t_{n}},
\]
is a homomorphism from the semigroup of paths under concatenation
to the algebra of formal tensor series under tensor product. Moreover,
in 1957 Chen \cite{Chen1957} discovered an important algebraic property
of such representation which asserts that the formal logarithm of
$S$ is always a formal Lie series. Equivalently, $S(x)$ satisfies
the shuffle product formula for every path $x$.

Inspired by the algebraic structure of iterated path integrals, in
a seminal work \cite{Lyons1998} in 1998, T. Lyons developed a theory
of path integration and differential equations driven by rough signals,
which is now known as \textit{rough path theory}. The key novel point in his
theory is that if a path is irregular, a collection of ``generalized
iterated integrals'' up to a certain degree should be pre-specified
in order to define integration against such a path. More precisely, a generic path should be a path taking values in the free nilpotent
group up to a certain degree, which is related to the roughness of the
underlying path. Identifying the right topology for paths
with certain roughness to ensure continuity properties for differential equations is a fundamental contribution in the analytic
aspect of rough path theory.

The development of rough path theory leads to tremendous applications
in probability theory, due to the important fact that most of interesting
continuous stochastic processes can be regarded as rough paths in
a canonical way. It follows that a pathwise theory of stochastic differential
equations is a consequence of rough path theory. This provides a new
perspective in solving a lot of probabilistic problems, for instance
regularity of hypoelliptic Gaussian SDEs \cite{CF2010}, large deviation
principles \cite{LQZ2002}, analysis on path and loop spaces \cite{Aida2011}
and etc. Rough path theory also provides a one dimensional prototype
of M. Hairer's Fields medal work on the theory of regularity structures
for stochastic partial differential equations.

An important result in rough path theory, known as Lyons' extension
theorem, asserts that a rough path has a unique lifting to the full
tensor algebra which has the same regularity. In particular, a rough
path can also be represented by a formal tensor series of ``generalized
iterated integrals'' as in the bounded variation case. In the rough
path literature, such representation is usually known as the signature
of a rough path.

A fundamental question in rough path theory is to understand in what
sense is such representation faithful. In 1958, Chen \cite{Chen1958}
gave an answer to this question for the class of piecewise regular
and irreducible paths. After five decades, in 2010 H. Bambly and T.
Lyons \cite{HL2010} solved this problem for the class of continuous
paths with bounded variation. More precisely, they proved that a continuous
path of bounded variation is determined by its signature up to tree-like
equivalence. This result was recently extended to the rough path setting by
H. Boedihardjo, X. Geng, T. Lyons and D. Yang \cite{BGLY2014} in
2016. 

From a theoretical point of view, the uniqueness result of \cite{BGLY2014}
is important as it builds an isomorphism between the rough path space
modulo tree-like equivalence and the corresponding signature group.
In particular, it reveals a link between the geometric property of
rough paths and the algebraic property of signatures. A natural reason
of looking at the signature is that the algebraic structure is very
explicit and simple: polynomial functionals on the signature group
are always linear functionals. Therefore, regular functions on
signature are  relatively easy to study. On the practical side,
this point plays a key role in the signature approach for time series
analysis and financial data analysis (c.f. \cite{LLN2013}, \cite{LNO2014}
for instance).

On the other hand, despite of the fact that the signature map $S$
descends to an isomorphism between tree-reduced rough paths (a canonical
representative in each equivalence class) and their signatures, the
proof of the uniqueness result in \cite{BGLY2014} does not contain
any information about how one can ``see'' the trajectory of a tree-reduced
rough path from its signature. The question about reconstructing of a path
from its signature is interesting and important as it might shed light on understanding the local geometry of a path from global information (the signature). Moreover, a nice reconstruction method might be useful to understand functions on the rough path space by pulling them back to the signature group through
the inverse map $S^{-1},$ and as mentioned before, the resulting
functions on signature are generally easier to study. The reconstruction problem was first studied by T. Lyons and W. Xu
\cite{LX2015A}, \cite{LX2015B} for the class of piecewise linear
paths and the class of continuously differentiable paths in which
the modulus of continuity for the derivatives is known. 

The aim of the present paper is to study the reconstruction problem
in the general rough path setting. To be precise, we aim at reconstructing
a tree-reduced rough path from its signature in an explicit and universal
way, in the sense that it relies only on the Euclidean structure where
our underlying paths should live, and with the knowledge of any given
signature, our reconstruction produces the underlying tree-reduced
rough path. The main idea of our reconstruction is motivated from
the understanding on Y. Le Jan and Z. Qian's work \cite{LQ2013} for
Brownian motion into a completely deterministic setting (c.f. Section 3 for a
more detailed explanation). We hope that our work might give us a
constructive understanding on the uniqueness result in \cite{BGLY2014},
and in particular on the inverse of the signature map.

The present paper is organized in the following way. In Section 2,
we recall the basic notions on rough paths and the uniqueness result
in \cite{BGLY2014}. In Section 3, we explain the main idea of our
reconstruction, and provide the precise mathematical setting of our
problem. In Section 4, we present several preliminary results which
are essential for our study. In Section 5, we consider the case when
our underlying tree-reduced rough paths are non-self-intersecting.
In Section 6, we deal with the general case. In Section 7, we give
a few concluding remarks on the present work.

\section{Generalities on the Uniqueness Result for the Signature of a Rough
Path}

In this section, we present the basic notions on the signature of
rough paths and recapture the main result of \cite{BGLY2014}. We
will see that the general idea of understanding non-self-intersecting signature paths in \cite{BGLY2014} plays an important role in the present work.

Let $T((\mathbb{R}^{d}))$ be the infinite dimensional tensor algebra
over $\mathbb{R}^{d}$ consisting of formal series of tensors in each
degree. For $N\in\mathbb{N},$ let 
\[
T^{(N)}(\mathbb{R}^{d})=\bigoplus_{k=0}^{N}(\mathbb{R}^{d})^{\otimes k}
\]
be the truncated tensor algebra up to degree $N$. Here we identify
$(\mathbb{R}^{d})^{\otimes k}\cong\mathbb{R}^{d^{k}}$ with basis
$\{e_{i_{1}}\otimes\cdots\otimes e_{i_{k}}:\ 1\leqslant i_{1},\cdots,i_{k}\leqslant d\}$,
where $\{e_{1},\cdots,e_{d}\}$ is the canonical basis of $\mathbb{R}^{d}.$ 
\begin{defn}
\label{def: p-rough path}A \textit{multiplicative functional} of
degree $N\in\mathbb{N}$ is a continuous map $\mathbf{X}_{\cdot,\cdot}=(1,X_{\cdot,\cdot}^{1},\cdots,X_{\cdot,\cdot}^{N})$
from the standard $2$-simplex $\Delta=\{(s,t):\ 0\leqslant s\leqslant t\leqslant1\}$
to $T^{(N)}(\mathbb{R}^{d})$ satisfying the following so-called \textit{Chen's
identity}:
\[
\mathbf{X}_{s,u}=\mathbf{X}_{s,t}\otimes\mathbf{X}_{t,u},\ \forall0\leqslant s\leqslant t\leqslant u\leqslant1.
\]
Let $\mathbf{X},\mathbf{Y}$ be two multiplicative functionals of
degree $N.$ Define 
\[
d_{p}(\mathbf{X},\mathbf{Y})=\max_{1\leqslant i\leqslant N}\sup_{\mathcal{P}_{[0,1]}}\left(\sum_{l}\left|X_{t_{l-1},t_{l}}^{i}-Y_{t_{l-1},t_{l}}^{i}\right|^{\frac{p}{i}}\right)^{\frac{i}{p}},
\]
where $\mathcal{P}_{[0,1]}$ denotes all finite partitions of $[0,1].$
$d_{p}$ is called the $p$-\textit{variation metric}, and we say
$\mathbf{X}$ has finite (total) $p$-variation if $d_{p}(\mathbf{X},\mathbf{1})<\infty$
where $\mathbf{1}:=(1,0,\cdots,0).$ A multiplicative functional $\mathbf{X}$
of degree $\lfloor p\rfloor$ with finite  $p$-variation is called
a $p$-\textit{rough path}. The space of $p$-rough paths over $\mathbb{R}^{d}$
is denoted by $\Omega_{p}(\mathbb{R}^{d}).$
\end{defn}

A fundamental analytic property of $p$-rough paths is the following
extension theorem proved by Lyons \cite{Lyons1998}. It asserts that
some analytic version of ``iterated path integrals'' against a $p$-rough
path can be uniquely defined and has a nice factorial decay property. 
\begin{thm}
\label{thm: Lyons' extension}(Lyons' extension theorem) Let $\mathbf{X}$
be a $p$-rough path. Then for any $i\geqslant\lfloor p\rfloor+1,$
there exists a unique continuous map $X^{i}:\ \Delta\rightarrow\left(\mathbb{R}^{d}\right)^{\otimes i}$
such that 
\[
S(\mathbf{X}):=\left(1,X^{1},\cdots,X^{\lfloor p\rfloor},\cdots X^{i},\cdots\right)
\]
is a multiplicative functional in \textup{$T\left(\left(\mathbb{R}^{d}\right)\right)$
with finite  $p$-variation when restricted up to each degree. Moreover,
there exists a positive constant $C_{p}$ depending only on $p$,
such that 
\begin{equation}
\left|X_{s,t}^{i}\right|\leqslant\frac{C_{p}\omega(s,t)^{\frac{i}{p}}}{\left(\frac{i}{p}\right)!},\ \forall i\geqslant1\ \mathrm{and}\ \forall(s,t)\in\Delta,\label{eq: Lyons' extension}
\end{equation}
where $\omega$ is the control function defined by 
\[
\omega(s,t)=\sum_{i=1}^{\lfloor p\rfloor}\sup_{\mathcal{P}_{[s,t]}}\sum_{l}\left|X_{t_{l-1},t_{l}}^{i}\right|^{\frac{p}{i}},\ (s,t)\in\Delta.
\]
}
\end{thm}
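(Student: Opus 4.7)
The plan is to prove the theorem by induction on the degree $i\geq\lfloor p\rfloor+1$: assuming $X^{1},\ldots,X^{i-1}$ have been defined so that $(1,X^{1},\ldots,X^{i-1})$ is multiplicative on $\Delta$ with the factorial bound (\ref{eq: Lyons' extension}) in those degrees, I will construct $X^{i}$ as the limit of Riemann-type sums. For each partition $\mathcal{D}=\{s=t_{0}<\cdots<t_{n}=t\}$, let $\widetilde{\mathbf{X}}_{t_{k-1},t_{k}}=(1,X^{1}_{t_{k-1},t_{k}},\ldots,X^{i-1}_{t_{k-1},t_{k}},0)$ be the trivial extension to degree $i$ of each building block, form the tensor product of these in $T^{(i)}(\mathbb{R}^{d})$, and define $\Xi^{i,\mathcal{D}}_{s,t}$ to be its degree-$i$ component. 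This is the only natural candidate forced on us by the requirement that Chen's identity hold in degree $i$, so if the limit exists it is automatically the unique extension of the multiplicative structure.

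The heart of the matter is a sewing estimate: inserting an interior point $u$ into an interval $[t_{k-1},t_{k}]$ perturbs only the degree-$i$ component of the $k$-th factor (the lower degrees are unchanged, by Chen's identity already established for $X^{1},\ldots,X^{i-1}$), so $\Xi^{i,\mathcal{D}}$ changes by exactly the new cross term $\sum_{j_{1}+j_{2}=i,\,j_{1},j_{2}\geq 1}X^{j_{1}}_{t_{k-1},u}\otimes X^{j_{2}}_{u,t_{k}}$. The inductive factorial bound gives the norm estimate
\[
\sum_{j_{1}+j_{2}=i,\,j_{1},j_{2}\geq 1}\frac{C_{p}^{2}\,\omega(t_{k-1},u)^{j_{1}/p}\,\omega(u,t_{k})^{j_{2}/p}}{(j_{1}/p)!\,(j_{2}/p)!},
\]
and the main obstacle is to sum this across successive refinements. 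This is exactly what the \emph{neo-classical inequality} accomplishes: it yields a constant $c_{p}$ with $\sum_{j_{1}+j_{2}=i}a^{j_{1}/p}b^{j_{2}/p}/((j_{1}/p)!(j_{2}/p)!)\leq c_{p}(a+b)^{i/p}/(i/p)!$ for $i\geq p$. Since $i/p>1$, a pigeonhole argument using superadditivity of $\omega$ produces in any partition of $n$ intervals an interior point with $\omega(t_{k-1},t_{k+1})\leq 2\omega(s,t)/(n-1)$, so the cumulative sewing error incurred by removing points one at a time is dominated by a convergent series in $n^{-i/p}$. This yields Cauchy convergence of $\Xi^{i,\mathcal{D}}_{s,t}$ as $|\mathcal{D}|\to 0$, the bound (\ref{eq: Lyons' extension}) at level $i$ with a suitable constant, and Chen's identity at degree $i$ is inherited from the concatenation structure of the construction; joint continuity on $\Delta$ follows by the same kind of estimate applied to comparing $\Xi^{i,\mathcal{D}}_{s,t}$ and $\Xi^{i,\mathcal{D}}_{s',t'}$ for nearby endpoints.

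For uniqueness of the extension, any two continuous extensions $X^{i}$ and $\widetilde{X}^{i}$ making the lifted functional multiplicative and of finite $p$-variation at level $i$ differ by $R_{s,t}:=\widetilde{X}^{i}_{s,t}-X^{i}_{s,t}$, and Chen's identity in degree $i$ (where the lower-degree cross terms cancel between the two extensions) forces $R_{s,u}=R_{s,t}+R_{t,u}$. Hence $R$ is a continuous additive function on $\Delta$ of finite $p/i$-variation; since $i\geq\lfloor p\rfloor+1>p$ we have $p/i<1$, and the standard fact that a continuous path of finite $q$-variation with $q<1$ is constant forces $R\equiv 0$. Iterating the construction from degree $\lfloor p\rfloor+1$ upwards produces the full extension to $T((\mathbb{R}^{d}))$.
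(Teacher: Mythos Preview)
Your argument is the classical one and is correct: the degree-by-degree construction via ``almost-multiplicative'' tensor products, with the increment under point-removal controlled by the neo-classical inequality and summed via the pigeonhole choice of a small subinterval, together with the additivity-based uniqueness using $p/i<1$, is exactly Lyons' original proof in \cite{Lyons1998}.

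Note, however, that the paper does not itself prove this theorem. It is quoted as a foundational input from \cite{Lyons1998} (``the following extension theorem proved by Lyons''), and is used as a black box throughout---most crucially for the factorial decay (\ref{eq: Lyons' extension}) in Lemma~\ref{lem: picking out N}. So there is no ``paper's own proof'' to compare against; your write-up simply supplies the standard argument that the paper cites. One small remark: for the induction to start cleanly you should note that the bound (\ref{eq: Lyons' extension}) for $1\leqslant i\leqslant\lfloor p\rfloor$ is immediate from the very definition of $\omega$ (indeed $|X^{i}_{s,t}|^{p/i}\leqslant\omega(s,t)$), so the base of the induction is in place before the sewing step begins.
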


\begin{defn}
The tensor element $S(\mathbf{X})_{0,1}\in T((\mathbb{R}^{d}))$ defined
by Theorem \ref{thm: Lyons' extension} is called the \textit{signature}
of $\mathbf{X}.$
\end{defn}

In the case of $p=1,$ a $p$-rough path $\mathbf{X}$ is just the
collection of increments of the continuous path $x_{\cdot}:=\mathbf{X}_{0,\cdot}$
in $\mathbb{R}^{d}$ with bounded  variation, and Lyons' extension
of $\mathbf{X}$ reduces to the classical iterated path integrals
along $x$. 

Now let $G(\mathbb{R}^{d})$ be the group
defined by the exponential of formal Lie series in $T((\mathbb{R}^{d})).$
For $N\in\mathbb{N},$ let $G^{N}(\mathbb{R}^{d})$ be the truncation
of $G(\mathbb{R}^{d})$ up to degree $N$, which consists of the exponential
of Lie polynomials up to degree $N$ in $T^{(N)}(\mathbb{R}^{d}).$
$G^{N}(\mathbb{R}^{d})$ is usually known as the \textit{free nilpotent group}
of degree $N.$

The following result, known as the \textit{shuffle product formula}, reveals
a fundamental algebraic property for the signature of continuous paths with bounded
 variation (c.f. Reutenauer \cite{Reutenauer1993} and also Chen \cite{Chen1957}).
This property lies as a crucial base to expect that
a path with bounded variation is uniquely determined by its signature up to tree-like equivalence and also to its extension to the rough path setting.
\begin{thm}
\label{thm: shuffle product formula}(Shuffle product formula) A tensor
element $a=(1,a^{1},a^{2},\cdots)\in T((\mathbb{R}^{d}))$ belongs
to $G(\mathbb{R}^{d})$ if and only if 
\begin{equation}
a^{m}\otimes a^{n}=\sum_{\sigma\in\mathcal{S}(m,n)}\mathcal{P}^{\sigma}(a^{m+n}),\ \forall m,n\geqslant1,\label{eq: shuffle product formula}
\end{equation}
where $\mathcal{S}(m,n)$ denotes the set of $(m,n)$-shuffles in
the permutation group of order $m+n$, and $\mathcal{P}^{\sigma}:\ V^{\otimes(m+n)}\rightarrow V^{\otimes(m+n)}$
is the permutation operator given by 
\[
\mathcal{P}^{\sigma}(v_{1}\otimes\cdots\otimes v_{m+n})=v_{\sigma(1)}\otimes\cdots\otimes v_{\sigma(m+n)}.
\]

In particular, for any continuous path $x$ in $\mathbb{R}^{d}$ with
bounded variation, the signature of $x$ satisfies (\ref{eq: shuffle product formula})
and hence belongs to $G(\mathbb{R}^{d}).$
\end{thm}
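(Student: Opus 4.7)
The plan is to approach the equivalence through the standard shuffle Hopf algebra reformulation. Let $V=\mathbb{R}^{d}$. I would first introduce on the polynomial tensor algebra $T(V)$ the commutative shuffle product, defined on basis words by $u\star v:=\sum_{\sigma\in\mathcal{S}(|u|,|v|)}\mathcal{P}^{\sigma}(u\otimes v)$, making $T(V)$ a graded commutative algebra. Pairing $T(V)$ against $T((\mathbb{R}^{d}))$ via the natural bilinear form that makes distinct monomials orthogonal, this shuffle product is dual to a coproduct $\Delta:T((\mathbb{R}^{d}))\to T((\mathbb{R}^{d}))\otimes T((\mathbb{R}^{d}))$ (completed tensor product). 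A direct unfolding of this duality shows that the identity (\ref{eq: shuffle product formula}) is equivalent to $\Delta(a)=a\otimes a$, i.e.\ that $a$ is \emph{group-like}; equivalently, the linear functional $\langle a,\cdot\rangle$ on $T(V)$ is an algebra homomorphism with respect to $\star$.

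Next I would invoke Friedrichs' criterion: an element $\ell\in T((\mathbb{R}^{d}))$ with zero constant term is a formal Lie series if and only if it is \emph{primitive}, i.e.\ $\Delta(\ell)=\ell\otimes 1+1\otimes\ell$. Since $\Delta$ is an algebra homomorphism for the concatenation product on $T((\mathbb{R}^{d}))$ and since $\exp$ and $\log$ are well-defined formal operations (the degree-$n$ component of each series involves only finitely many algebraic terms), a standard Hopf-algebraic computation then shows that $\exp(\ell)$ is group-like whenever $\ell$ is primitive, and conversely every group-like element with augmentation $1$ is of the form $\exp(\ell)$ for a unique primitive $\ell$. Chaining these two equivalences yields exactly the desired statement $a\in G(\mathbb{R}^{d})\iff a$ satisfies (\ref{eq: shuffle product formula}).

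For the second assertion about paths of bounded variation, I would run the classical Fubini argument. For a continuous bounded variation path $x$, write
\[
S(x)^{m}_{0,1}\otimes S(x)^{n}_{0,1}=\int_{\Delta^{m}\times\Delta^{n}}dx_{t_{1}}\otimes\cdots\otimes dx_{t_{m}}\otimes dx_{s_{1}}\otimes\cdots\otimes dx_{s_{n}},
\]
where $\Delta^{k}$ denotes the ordered $k$-simplex in $[0,1]^{k}$. Up to a diagonal null set (negligible against $dx^{\otimes(m+n)}$ by continuity and bounded variation of $x$), the product $\Delta^{m}\times\Delta^{n}$ decomposes into a disjoint union of sub-simplices of $\Delta^{m+n}\subset[0,1]^{m+n}$ indexed precisely by the $(m,n)$-shuffles $\sigma$, each corresponding to one interleaving of the two blocks of time parameters. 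Summing these pieces and permuting the tensor factors by $\mathcal{P}^{\sigma}$ to realign with the reordered integration variables yields exactly the right-hand side of (\ref{eq: shuffle product formula}).

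The main obstacle I anticipate is the bookkeeping for the infinite-dimensional algebraic setup in the first part: $T((\mathbb{R}^{d}))$ is a space of formal series, so one must handle gradings, the completed tensor product and the duality pairing carefully in order for Friedrichs' theorem and the formal $\exp/\log$ manipulations to transfer rigorously from the polynomial algebra $T(V)$ to its completion. The bounded-variation portion is by comparison routine, the only minor delicacy being the dismissal of the diagonal coincidences in the decomposition of $\Delta^{m}\times\Delta^{n}$.
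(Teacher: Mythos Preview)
Your proposal is a correct and standard route to the result, but note that the paper does \emph{not} actually prove this theorem: it is quoted as background with a reference to Reutenauer's \textit{Free Lie Algebras} and Chen's 1957 paper, and no argument is supplied. So there is nothing to compare against in the paper itself.

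That said, your sketch is essentially the textbook proof one finds in Reutenauer: recast (\ref{eq: shuffle product formula}) as the group-like condition $\Delta(a)=a\otimes a$ for the deconcatenation coproduct, invoke Friedrichs' criterion to identify primitives with Lie series, and then use that $\exp$/$\log$ interchange primitives and group-likes because $\Delta$ is a concatenation-algebra homomorphism. The Fubini decomposition of $\Delta^{m}\times\Delta^{n}$ into shuffle-indexed simplices is likewise the classical argument for the bounded-variation signature. Your anticipated obstacle about completions is real but routine: all identities are homogeneous degree by degree, so one works in each $T^{(N)}$ and passes to the projective limit; no analytic subtlety enters.
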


Following the general setting of \cite{BGLY2014}, in the present
paper we work with a class of rough paths called weakly geometric
rough paths. It is defined on the previous algebraic structure and is the
fundamental class of paths that the theory of rough integration and
differential equations is based on.
\begin{defn}
\label{def: weakly geometric rough paths}A \textit{weakly geometric
$p$-rough path} is a $p$-rough path taking values in $G^{\lfloor p\rfloor}(\mathbb{R}^{d}).$
The space of weakly geometric $p$-rough paths over $\mathbb{R}^{d}$
is denoted by $WG\Omega_{p}(\mathbb{R}^{d}).$
\end{defn}

It can be shown (c.f. \cite{FV2010}, and also \cite{BGLY2014}) that
the signature of a weakly geometric rough path is an element in $G.$

\begin{rem}
There is an equivalent intrinsic definition of weakly geometric rough
paths in terms of the Carnot\textendash Carathéodory metric $d_{CC}$
on $G^{\lfloor p\rfloor}(\mathbb{R}^{d}).$ To be precise, a weakly geometric
$p$-rough path is a continuous path $\mathbf{X}:\ [0,1]\rightarrow G^{\lfloor p\rfloor}(\mathbb{R}^{d})$
starting at the unit such that $\mathbf{X}$ has finite  $p$-variation
with respect to the metric $d_{CC}$. This definition is equivalent
to Definition \ref{def: weakly geometric rough paths} through $\mathbf{X}_{s,t}=X_{s}^{-1}\otimes\mathbf{X}_{t}$
and $\mathbf{X}_{t}=\mathbf{X}_{0,t}$. We refer the reader to \cite{FV2010}
for a detailed discussion along this approach.
\end{rem}

From now on, unless otherwise stated we always regard a weakly geometric $p$-rough path as
an actual path in $G^{\lfloor p\rfloor}(\mathbb{R}^{d})$ instead
of a multiplicative functional defined on the simplex $\Delta.$

The main result of \cite{BGLY2014} states that a weakly geometric
$p$-rough path is uniquely determined by its signature up to tree-like
equivalence. More precisely, it was shown that:
\begin{thm}
\label{thm: uniqueness result}A weakly geometric $p$-rough path
$\mathbf{X}$ has trivial signature if and only if it is tree-like,
in the sense that there exists some real tree $\tau$ together with
a continuous loop $\alpha:\ [0,1]\rightarrow\tau$ and some continuous
map $\psi:\ \tau\rightarrow WG\Omega_{p}(\mathbb{R}^{d})$ such that
$\mathbf{X}=\psi\circ\alpha.$ 
\end{thm}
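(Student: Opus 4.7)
The plan is to prove the two implications separately, with essentially all the content in the forward direction.

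\textbf{Easy direction (tree-like $\Rightarrow$ trivial signature).} I would start by verifying the claim for loops in finite simplicial $\mathbb{R}$-trees: parametrize $\alpha$ as a piecewise geodesic, apply Chen's identity, and use the elementary fact that an edge traversal followed immediately by its reverse has trivial signature. A general real tree $\tau$ can be exhausted by finite subtrees carrying approximating loops $\alpha_n\to\alpha$, so that $\psi\circ\alpha_n\to\psi\circ\alpha$ in $p$-variation by continuity of $\psi$. The continuity of the Lyons extension (Theorem~\ref{thm: Lyons' extension}) in $p$-variation then transfers the triviality $S(\psi\circ\alpha_n)=\mathbf{1}$ to the limit, giving $S(\mathbf{X})=\mathbf{1}$.

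\textbf{Hard direction (trivial signature $\Rightarrow$ tree-like).} Assume $S(\mathbf{X})=\mathbf{1}$. Step A: construct a tree-reduced representative $\widetilde{\mathbf{X}}$ in the equivalence class of $\mathbf{X}$ by inductively collapsing maximal tree-like sub-excursions, checking via Chen's identity that this procedure preserves the signature and sends tree-like paths to constants. It therefore suffices to show that a tree-reduced rough path with trivial signature must be constant. Step B: handle the non-self-intersecting case first. Here the factorial decay estimate~\eqref{eq: Lyons' extension} combined with a suitable analytic device (in the BV case, hyperbolic development into $\mathbb{H}^n$ is used for this purpose) should extract from the signature enough geometric data to locate at least one point on the trajectory of $\widetilde{\mathbf{X}}$; bootstrapping along the time axis via Chen's identity would then recover the entire trajectory up to reparametrization, so that triviality of the signature collapses it to a point. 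Step C: for a general tree-reduced $\widetilde{\mathbf{X}}$ with self-intersections, combine the shuffle product formula (Theorem~\ref{thm: shuffle product formula}) with Chen's identity to decompose the signature along the self-intersection pattern into contributions from non-self-intersecting sub-arcs, and apply Step B to each.

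\textbf{Main obstacle.} I expect the core difficulty to sit in Step B and in its integration with Step C. Passing from the purely algebraic data of the signature to genuine pointwise geometric information requires an analytic tool beyond the bare $p$-variation estimates, and adapting the hyperbolic-development approach of Hambly--Lyons to the rough path setting is delicate because one only has a generalized notion of iterated integrals. Step C adds a combinatorial obstruction: self-intersections create shuffle-algebraic identities in the signature that superficially resemble cancellations, and distinguishing such algebraic coincidences from genuine backtracking demands a careful geometric argument. This is where the bulk of the technical work of BGLY goes, and where I would expect to invest most of the effort.
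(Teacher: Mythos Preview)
This theorem is \emph{not} proved in the present paper; it is quoted from \cite{BGLY2014} as background (see the paragraph preceding Theorem~\ref{thm: uniqueness result}). The paper does, however, sketch the actual strategy of \cite{BGLY2014} in the remark following Corollary~\ref{cor: uniqueness of simple signature paths}: one first proves that two rough paths whose \emph{signature paths} $t\mapsto S(\mathbf{X})_{0,t}$ are simple and which share the same endpoint signature must agree up to reparametrization (this is the content of Corollary~\ref{cor: uniqueness of simple signature paths}, and in \cite{BGLY2014} it is obtained via extended signatures against compactly supported one-forms, as in Proposition~\ref{prop: existence of one forms}); this yields a canonical real tree metric on the signature group $\mathfrak{S}_p$, and any $\mathbf{X}$ with trivial signature is then automatically a continuous loop in that real tree, with $\psi$ the projection.

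Your outline departs from this in a way that creates genuine gaps. Step~A is circular as stated: ``collapsing maximal tree-like sub-excursions'' presupposes you can recognise tree-like pieces of a rough path, which is exactly what you are trying to establish. In \cite{BGLY2014} the order is reversed---the tree-reduced representative is \emph{defined} to be the one whose signature path is simple, and its existence and uniqueness come out of the argument rather than being fed in. Step~B is the more serious problem: hyperbolic development is the analytic engine in the bounded-variation case of \cite{HL2010}, but it does not extend to $p>1$ because one cannot solve the development ODE pathwise without already having the higher-order information the signature encodes. The replacement in \cite{BGLY2014} is not a development at all but the extended-signature technique of Section~4--5 here (integrating sequences of one-forms supported on disjoint domains). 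Finally, Step~C's decomposition along the self-intersection pattern of $\widetilde{\mathbf{X}}$ in $G^{\lfloor p\rfloor}(\mathbb{R}^d)$ is not how \cite{BGLY2014} proceeds: instead one lifts to the full signature path in $T((\mathbb{R}^d))$, which for a tree-reduced path is \emph{automatically} simple, so no combinatorial self-intersection analysis is needed. Your identification of where the difficulty lies (extracting geometry from algebra) is correct, but the proposed toolkit would not close the argument for $p>1$.
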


In particular, let 
\[
\mathfrak{S}_{p}=\{g=S(\mathbf{X})_{0,1}:\ \mathbf{X}\in WG\Omega_{p}(\mathbb{R}^{d})\}
\]
be the signature group for weakly geometric $p$-rough paths. It was
proved that for each $g\in\mathfrak{S}_{p},$ there exists a unique
$\mathbf{X}^{g}\in WG\Omega_{p}(\mathbb{R}^{d})$ up to reparametrization,
such that $S(\mathbf{X}^{g})_{0,1}=g$ and $S(\mathbf{X}^{g})_{0,\cdot}$
is a non-self-intersecting (or simple) path in $T((\mathbb{R}^{d}))$.
$\mathbf{X}^{g}$ is known as the \textit{tree-reduced} path associated with
signature $g.$ It follows that $\mathfrak{S}_{p}$ can be equipped
with a real tree metric in a canonical way, and a weakly geometric
$p$-rough path with trivial signature factors through this real tree
with $\psi$ being the projection map.

Therefore, modulo tree-like paths the signature homomorphism 
\begin{eqnarray*}
S_{p}:\ WG\Omega_{p}(\mathbb{R}^{d}) & \rightarrow & T((\mathbb{R}^{d})),\\
\mathbf{X} & \mapsto & S(\mathbf{X})_{0,1},
\end{eqnarray*}
descents to a group isomorphism
\[
\widetilde{S}_{p}:\ WG\Omega_{p}(\mathbb{R}^{d})/_{\mathrm{tree-like}}\stackrel{\cong}{\rightarrow}\mathfrak{S}_{p}.
\]
Moreover, each equivalence class contains a unique tree-reduced path
$\mathbf{X}$ (up to reparametrization) and this isomorphism gives rise to a one-to-one
correspondence between tree-reduced paths (up to reparametrization)
and signatures.

However, the proof of Theorem \ref{thm: uniqueness result} contains
no information about how a tree-reduced path can be reconstructed
from its signature. The development of such a reconstruction in an
explicit and universal way is the main focus of the present paper.

Before studying the reconstruction problem, let us mention the following
interesting fact. In the case of $p=1,$ it is not hard to see that
(c.f. \cite{HL2010}) a path is tree-reduced if and only if it is a
reparametrization of the unique minimizer of $1$-variation (i.e.
the length) among paths parametrized by arc length with the same signature.
However, this is not true in the case of $p>1$. A tree-reduced path
certainly minimizes the $p$-variation, but a $p$-variation minimizer
might \textit{not} be tree-reduced no matter how it is parametrized.
We conclude this section by providing such an example.
\begin{example}
Consider the two dimensional case and $1<p<2.$ Let $\wideparen{AB}$
be an arc of the unit circle centered at $O\in\mathbb{R}^{2}$ with
central angle $\theta_{0}$, and let $C$ be the midpoint of $\wideparen{AB}$.
Let $D$ be a point on the extension of the radius vector $\overrightarrow{OC}$
and let $|CD|=\varepsilon.$ Consider the paths $x,y:\ [0,1]\rightarrow\mathbb{R}^{2}$
defined by the trajectories 
\[
x=\wideparen{AC}\sqcup\overrightarrow{CD}\sqcup\overrightarrow{DC}\sqcup\wideparen{CB},\ y=\wideparen{AB},
\]
respectively, where ``$\sqcup$'' means concatenation. It is easy
to see that $x,y$ have the same signature $g$ and $y$ is a tree-reduced
path. Apparently $x$ and $y$ do not differ by just a reparametrization.
Now we show that $\|x\|_{p-\mathrm{var}}=\|y\|_{p-\mathrm{var}}=\left|\overrightarrow{AB}\right|$
provided $\theta_{0}$ and $\varepsilon$ are small enough. 

In fact, let $E\in\wideparen{AB}$ and denote the central angle $\angle EOB$
by $\theta.$ Consider the function
\[
f(\theta)=\left|\overrightarrow{AE}\right|^{p}+\left|\overrightarrow{EB}\right|^{p},\ \theta\in[0,\theta_{0}],
\]
which can be written as 
\[
f(\theta)=2^{p}\left(\sin^{p}\frac{\theta}{2}+\sin^{p}\frac{\theta_{0}-\theta}{2}\right)
\]
according to Euclidean geometry. Computing the second derivative of
$f$, we obtain that
\[
f''(\theta)=\frac{p}{2^{2-p}}\left((p-1)\left(\frac{\cos^{2}\frac{\theta}{2}}{\sin^{2-p}\frac{\theta}{2}}+\frac{\cos^{2}\frac{\theta_{0}-\theta}{2}}{\sin^{2-p}\frac{\theta_{0}-\theta}{2}}\right)-\left(\sin^{p}\frac{\theta}{2}+\sin^{p}\frac{\theta_{0}-\theta}{2}\right)\right).
\]
Since $1<p<2,$ we know that when $\theta_{0}$ is small, $f''(\theta)$
is uniformly positive and hence $f$ is convex on $[0,\theta].$ Also
note that $f(0)=f(\theta_{0})=\left|\overrightarrow{AB}\right|^{p}$.
Therefore, for $\theta_{0}$ small enough $f$ obtains its maximum
on the end points and we have 
\[
\left|\overrightarrow{AE}\right|^{p}+\left|\overrightarrow{EB}\right|^{p}\leqslant\left|\overrightarrow{AB}\right|^{p},\ \forall E\in\wideparen{AB}.
\]
Now we fix such $\theta_{0}$. This already implies that $\|y\|_{p}=\left|\overrightarrow{AB}\right|$.
Moreover, by considering the symmetry of $f(\theta)$ it is easy to
see that $f$ obtains its minimum at $\theta=\theta_{0}/2.$ Set 
\[
\lambda=\left|\overrightarrow{AB}\right|^{p}-\left|\overrightarrow{AC}\right|^{p}-\left|\overrightarrow{CB}\right|^{p}>0.
\]

It remains to show that when $\varepsilon$ is small enough, $\|x\|_{p}=\left|\overrightarrow{AB}\right|$.
To this end, let 
\[
\mathcal{P}:\ 0=t_{0}<t_{1}<\cdots<t_{n}=1
\]
be a finite partition of $[0,1]$, and let $t_{k},t_{l}$ be the first
and last partition points at which $x$ is in $\overline{CD}$ respectively.
If such points do not exist, then obviously we have 
\[
\sum_{i=1}^{n}\left|x_{t_{i}}-x_{t_{i-1}}\right|^{p}\leqslant\left|\overrightarrow{AB}\right|^{p}.
\]
Otherwise, we have 
\begin{eqnarray*}
\sum_{i=1}^{n}\left|x_{t_{i}}-x_{t_{i-1}}\right|^{p} & = & \sum_{i=1}^{k-1}\left|x_{t_{i}}-x_{t_{i-1}}\right|^{p}+\left|x_{t_{k}}-x_{t_{k-1}}\right|^{p} +\sum_{i=k+1}^{l}\left|x_{t_{i}}-x_{t_{i-1}}\right|^{p}\\
 &  & +\left|x_{t_{l+1}}-x_{t_{l}}\right|^{p}+\sum_{i=l+2}^{n}\left|x_{t_{i}}-x_{t_{i-1}}\right|^{p}\\
 & \leqslant & \left|x_{t_{k-1}}-A\right|^{p}+\left|B-x_{t_{l+1}}\right|^{p}+\left|x_{t_{k}}-x_{t_{k-1}}\right|^{p}\\
 &  & +\left|x_{t_{l+1}}-x_{t_{l}}\right|^{p}+2\varepsilon^{p},
\end{eqnarray*}
where we have used the previous discussion and the fact that $\overrightarrow{CD}$
is a geodesic. It follows that
\begin{align*}
 & \sum_{i=1}^{n}\left|x_{t_{i}}-x_{t_{i-1}}\right|^{p}\\
\leqslant & \left|\overrightarrow{AC}\right|^{p}+\left|\overrightarrow{CB}\right|^{p}+\left(\left|x_{t_{k}}-x_{t_{k-1}}\right|^{p}-\left|C-x_{t_{k-1}}\right|^{p}\right)\\
 & +\left(\left|x_{t_{l+1}}-x_{t_{l}}\right|^{p}-\left|x_{t_{l+1}}-C\right|^{p}\right)+2\varepsilon^{p}\\
= & \left|\overrightarrow{AB}\right|^{p}-\left(\left|\overrightarrow{AB}\right|^{p}-\left|\overrightarrow{AC}\right|^{p}-\left|\overrightarrow{CB}\right|^{p}-\left(\left|x_{t_{k}}-x_{t_{k-1}}\right|^{p}-\left|C-x_{t_{k-1}}\right|^{p}\right)\right.\\
 & \left.-\left(\left|x_{t_{l+1}}-x_{t_{l}}\right|^{p}-\left|x_{t_{l+1}}-C\right|^{p}\right)-2\varepsilon^{p}\right).
\end{align*}
On the other hand, we have 
\begin{eqnarray*}
\left|x_{t_{k}}-x_{t_{k-1}}\right|^{p}-\left|C-x_{t_{k-1}}\right|^{p} & \leqslant & \left(\left|C-x_{t_{k-1}}\right|+\varepsilon\right)^{p}-\left|C-x_{t_{k-1}}\right|^{p}\\
 & \leqslant & \max\left\{ \left(\sqrt{\varepsilon}+\varepsilon\right)^{p},\theta_{0}^{p}\left(\left(1+\sqrt{\varepsilon}\right)^{p}-1\right)\right\} \\
 & =: & \mu(\varepsilon).
\end{eqnarray*}
The same inequality holds for $\left|x_{t_{l+1}}-x_{t_{l}}\right|^{p}-\left|x_{t_{l+1}}-C\right|^{p}.$
Therefore, we have
\begin{eqnarray*}
\sum_{i=1}^{n}\left|x_{t_{i}}-x_{t_{i-1}}\right|^{p} & \leqslant & \left|\overrightarrow{AB}\right|^{p}-\left(\lambda-2\mu(\varepsilon)-2\varepsilon^{p}\right)\leqslant\left|\overrightarrow{AB}\right|^{p},
\end{eqnarray*}
provided $\varepsilon$ is small enough so that 
\[
2\mu(\varepsilon)+2\varepsilon^{p}<\lambda.
\]
Now by taking supremum over all finite partitions of $[0,1],$ we
conclude that 
\[
\|x\|_{p}\leqslant\left|\overrightarrow{AB}\right|=\|y\|_{p}\leqslant\|x\|_{p}.
\]

Therefore, $x$ is also a $p$-variation minimizer with signature
$g.$
\end{example}

\section{Main Idea of the Reconstruction}

In this section, we explain the basic idea of our reconstruction and
present the precise mathematical setting of our problem.

The underlying strategy of our reconstruction is motivated from the probabilistic
work of Le Jan and Qian \cite{LQ2013} for Brownian motion, which was further developed
by X. Geng Z. Qian \cite{GQ2015} for hypoelliptic diffusions and
by H. Boedihardjo and X. Geng \cite{BG2015} in the non-Markov setting.
We first summarize the original idea of Le Jan and Qian for the almost-sure
reconstruction of Brownian sample paths.

(1) By knowing the (Stratonovich) signature of Brownian sample paths,
the shuffle product formula allows us to construct iterated path integrals
along any finite sequence of regular one forms (which they called
extended signatures).

(2) Given a disjoint family of nice compact
domains $\{K_{n}\}$ in space with a well-chosen one form supported
in each $K_{n},$ by evaluating extended signatures associated with
this family of one forms, with probability one we can determine the
ordered sequence of domains $K_{n}$ visited by a Brownian path.

(3) Construct a polygonal path associated with this ordered sequence
of domains $K_{n}$ visited by the underlying Brownian path. As we refine the geometric scheme, it is reasonable to expect that the polygonal approximation converges to the original Brownian path in some sense.

In the probabilistic setting, the way of choosing the ``testing''
one forms and of refining the geometric scheme to obtain convergence
depends on the a priori knowledge on the law of the underlying process, in particular on its certain non-degeneracy properties. In this respect, it gives
rise to the main obstruction of developing such idea in the deterministic
setting for arbitrary weakly geometric rough paths as we need
to treat \textit{every} path equally. We explain this point in more details. 

For simplicity, let us consider the two dimensional case in which
our underlying paths are non-self-intersecting and have bounded variation. Assume
that the plane is decomposed into disjoint squares of size order $\varepsilon$
and narrow tunnels of width order $\delta$ ($\delta<<\varepsilon$). By using extended signatures
we can determine the ordered sequence $\mathbf{m}^{\varepsilon,\delta}$
of squares visited by a path (this relies on the non-self-intersecting assumption
on our path in a crucial way), and we can construct a polygonal path
associated with this sequence according to (3) as discussed before. 

Now if we refine our geometric scheme by letting $(\varepsilon,\delta)\rightarrow0$
independently, we \textit{cannot} expect that the polygonal
approximation would converge to the original path in any sense although it
should be true generically. In general, for any given refinement of geometric schemes, one could always construct a path such that the convergence fails for this path. Heuristically, such a path should have a long excursion in the complement of each geometric scheme (i.e. the tunnels in this case) along the refinement.

The following is a simple example illustrating this situation.

\begin{example}
Figure \ref{fig: example that convergence fails} gives a simple example that the convergence of the polygonal approximation could fail in general. Here $x$ is a piecewise linear path starting from origin with two edges, going first from $(0,0)$ to $(1,0)$ and then from $(1,0)$ to $(1,1)$. In the $\varepsilon$-scale, every square is centered at $\varepsilon\boldsymbol{n}$ for some integer point $\boldsymbol{n}$ in the plane, leaving gaps between squares to the order of another independent parameter $\delta$. Now choose a subsequence $\varepsilon_n=\frac{2}{2n+1}$ and arbitrary $\delta_n$. It is then easy to see that along this subsequence $\varepsilon_n$, the polygonal approximation $x^n$ is given by the linear path joining the origin and the point $n\varepsilon_n$. In this situation we cannot expect that $x^n$ converges to the original path $x$ in any reasonable sense. 
\end{example}

\begin{figure} 
\begin{center} 
\includegraphics[scale=0.4]{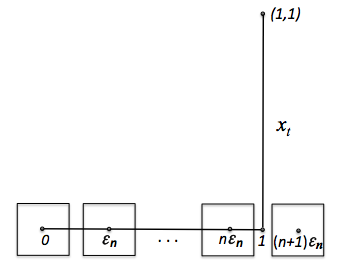}
\protect\caption{An example illustrating that the convergence of polygonal approximation can fail as $(\varepsilon,\delta)\rightarrow0.$}\label{fig: example that convergence fails}
\end{center}
\end{figure}

In a more general situation, suppose that $\{\mathcal{C}_n\}$ is any given refinement of geometric schemes in $\mathbb{R}^d$ (i.e. the size of domains goes to zero uniformly as $n\rightarrow\infty$), and let $0<r<R$. Then one can prove that there exists a subsequence ${\mathcal{C}_{l_n}}$ and a continuous path $x:\ [0,1]\rightarrow\mathbb{R}^d$, such that $|x_1|\geqslant R$ but the polygonal approximation $x^n$ of $x$ with respect to $\mathcal{C}_{l_n}$ is contained in $B(0,r)$ for all sufficiently large $n$. Therefore, one could not expect that $x^n$ converges to $x$ in any reasonable sense. To get an idea of how this works, we can first pick some geometric scheme $\mathcal{C}_{l_1}$ and construct a path $x^1$ with $|x^1_1|\geqslant R$ but which does not enter domains in $\mathcal{C}_{l_1}$ which are outside $B(0,r)$. Then we can pick some next geometric scheme ${\mathcal{C}_{l_2}}$ and modify $x^1$ to another path $x^2$ with $|x_1^2|\geqslant R$ but which does not enter domains in $\mathcal{C}_{l_1}$ \textit{and} $\mathcal{C}_{l_2}$ which are outside $B(0,r)$. By doing this inductively and keeping track of the uniform error created in each step, we can obtain a sequence of paths $x^n$ which converges uniformly to some limit path $x$ with $|x_1|\geqslant R$, and $x^n$ does not enter domains in $\mathcal{C}_{l_1},\cdots,\mathcal{C}_{l_n}$ which are outside $B(0,r)$. It is then natural to expect that the limit path $x$ does not enter domains in $\mathcal{C}_{l_n}$ which are outside $B(0,r)$ for every $n$. Therefore, $x$ will be a desired counter-example. The detailed proof of this claim is quite technical and is hence omitted.

The
main idea of overcoming  this issue is to let the signature $g$ determine
the width $\delta$ of the tunnels for each $\varepsilon$ according
to some stable quantity, so that the underlying path with signature
$g$ is not able to travel through any long narrow tunnels in the
resulting geometric scheme. Therefore the approximating polygonal
path must converge to the original path. This part is the key ingredient
of the present work.

If the underlying path is not simple, the situation is more complicated since
extended signatures might not be able to recover the discrete route
$\mathbf{m}^{\varepsilon,\delta}$. The key to overcoming this issue
is to lift the path to its truncated signature path up to certain
high degree $N$ which is determined by the signature $g$. This is
achieved again through some stabilizing property. Here we will see that the
decay of signature plays a crucial role.

Now we present the basic functional setting on which our reconstruction
is based. Throughout the rest of the present paper, the roughness
$p\geqslant1$ is fixed. 

As the signature is invariant under reparametrization, it is natural to regard the parametrization-free tree-reduced path as a single object to be reconstructed in some functional space.

\begin{defn}
\label{def: reparametrization}A \textit{reparametrization} is a continuous
and strictly increasing function $\sigma:\ [0,1]\rightarrow[0,1],$
such that $\sigma(0)=0$ and $\sigma(1)=1.$ The group of reparametrizations
is denoted by $\mathcal{R}.$
\end{defn}

Let $(E,\rho)$ be a metric space, and $W=C([0,1];E)$ be the space
of continuous paths in $E.$ We introduce an equivalence relation
``$\sim$'' on $W$ by $x\sim x'$ if and only if $x_{\cdot}=x'_{\sigma(\cdot)}$
for some $\sigma\in\mathcal{R},$ and let $W^{\sim}$ be the corresponding
quotient space. Now define a distance function $d$ on $W^{\sim}$
by
\begin{equation}
d([x],[x'])=\inf_{\sigma\in\mathcal{R}}\sup_{t\in[0,1]}\rho\left(x_{t},x'_{\sigma(t)}\right),\ [x],[x']\in W^{\sim}.\label{eq: parametrization-free metric}
\end{equation}
It is not hard to see that $d$ is well-defined, symmetric and satisfies
the triangle inequality (c.f. \cite{BG2015} for a detailed discussion).
However, it is not positively definite in general. Now let $\Gamma$
be the set of paths $x\in W$ such that there exist $0\leqslant s<t\leqslant1$
with $x_{u}=x_{s}$ for all $u\in[s,t].$ If we restrict the relation
``$\sim$'' on $W_{0}=\Gamma^{c}\subset W$ and consider the corresponding
quotient space $W_{0}^{\sim}$, it is proved in \cite{BG2015} that
$d$ is indeed a metric on $W_{0}^{\sim}.$ 

In our situation, we take $E$ to be the space $E_{\lfloor p\rfloor}=T^{\lfloor p\rfloor}(\mathbb{R}^{d})$. Let $\mathbf{X}$
be a weakly geometric $p$-rough path. If $\mathbf{X}$ stays constant
during some $[s,t]$, then its signature path would not be
simple. It follows that every tree-reduced weakly geometric $p$-rough
path can be regarded as an element in $W_{0}$ (recall that $G^{\lfloor p\rfloor}(\mathbb{R}^{d})$
is canonically embedded in $E_{\lfloor p\rfloor}$). Moreover, from the uniqueness result we know that any two tree-reduced path with the same signature differ
by a reparametrization in the sense of Definition \ref{def: reparametrization}.
Therefore, the set of tree-reduced paths modulo reparametrization
can naturally be regarded as a subset $\mathcal{T}_{p}$ of $W_{0}^{\sim}.$
Our aim is to reconstruct the unique element in $\mathcal{T}_{p}$
corresponding to each given $g\in\mathfrak{S}_{p}$.

\section{Extended Signatures for Truncated Signature Paths}

The starting point of our reconstruction is the use of extended signatures
which are iterated integrals along ordered sequence of one forms.
In this section we present the basic ingredients for this part.

\subsection{The Signature of Truncated Signature Paths}

Since we aim at reconstructing the tree-reduced path $\mathbf{X}$
as a trajectory in $E_{\lfloor p\rfloor},$ we shall integrate along
one forms defined on $E_{\lfloor p\rfloor}$ rather than those on
$\mathbb{R}^{d}.$ This requires an important fact that $\mathbf{X}$
can be regarded as the first level path of some weakly geometric $p$-rough
path $\mathbf{Y}$ over $E_{\lfloor p\rfloor}$ whose signature, which
is an element of the tensor algebra over $E_{\lfloor p\rfloor}$,
is determined by the signature of $\mathbf{X}$ explicitly. Fortunately,
such $\mathbf{Y}$ can be constructed in a canonical way. A construction
based on ordered shuffles already appears in \cite{CDLL2015}, \cite{Lyons1998}
for the study of rough integration against weakly geometric rough
paths, and also in \cite{BGLY2014} for the uniqueness problem. Here
for completeness we present an equivalent construction which does
not rely on ordered shuffles and is more convenient to use from a
combinatorial point of view.

Let $\mathbf{X}$ be an arbitrary weakly geometric $p$-rough path.

Given $N\in\mathbb{N},$ let $y_{t}=(1,X_{0,t}^{1},\cdots,X_{0,t}^{N})$
be the truncated signature path of $\mathbf{X}$ up to degree $N$.
Then we know that $y$ takes values in the Euclidean space 
\[
E_{N}:=\bigoplus_{i=0}^{N}(\mathbb{R}^{d})^{\otimes i}.
\]
Therefore, the construction of the lifting of $y$ should take place
in the tensor algebra over $E_{N},$ where for each $n\geqslant1,$
we apply the following identification:
\[
(E_{N})^{\otimes n}\cong\bigoplus_{i_{1},\cdots,i_{n}=0}^{N}(\mathbb{R}^{d})^{\otimes(i_{1}+\cdots+i_{n})}.
\]
Equivalently we need to construct $Y_{s,t}^{n;i_{1},\cdots,i_{n}}\in(\mathbb{R}^{d})^{\otimes(i_{1}+\cdots+i_{n})}$
for $n\geqslant1,$ $0\leqslant i_{1},\cdots,i_{n}\leqslant N$ and
$(s,t)\in\Delta.$ $Y^{1}$ is just the increment of $y.$

Now we consider the case when $n=2.$ We first proceed by a formal
calculation to motivate the rigorous construction. Such calculation
is based on the formal differential equation 
\[
dS(\mathbf{X})_{s,t}=S(\mathbf{X})_{s,t}\otimes dx_{t}
\]
for the signature path (c.f. \cite{FV2010}, Proposition 7.8) together
with the shuffle product formula. Note that the identity in each step
below do not make sense when $p\geqslant2.$

If $1\leqslant i_{1},i_{2}\leqslant N,$ then formally we have
\begin{align*}
 & Y_{s,t}^{2;i_{1},i_{2}}\\
\doteq & \int_{s<u<t}Y_{s,u}^{1;i_{1}}\otimes dy_{u}^{i_{2}}\\
\doteq & \int_{s<u<t}(X_{0,u}^{i_{1}}-X_{0,s}^{i_{1}})\otimes dX_{0,u}^{i_{2}}\\
\doteq & \int_{s<u<t}\left(\sum_{j_{1}=1}^{i_{1}}X_{0,s}^{i_{1}-j_{1}}\otimes X_{s,u}^{j_{1}}\right)X_{0,u}^{i_{2}-1}\otimes dx_{u}\\
\doteq & \int_{s<u<t}\left(\sum_{j_{1}=1}^{i_{1}}X_{0,s}^{i_{1}-j_{1}}\otimes X_{s,u}^{j_{1}}\right)\left(\sum_{j_{2}=0}^{i_{2}-1}X_{0,s}^{i_{2}-1-j_{2}}\otimes X_{s,u}^{j_{2}}\right)\otimes dx_{u}\\
\doteq & \sum_{j_{1}=1}^{i_{1}}\sum_{j_{2}=0}^{i_{2}-1}\int_{s<u<t}P^{\tau(i_{1}-j_{1},j_{1},i_{2}-1-j_{2},j_{2})}(X_{0,s}^{i_{1}-j_{1}}\otimes X_{0,s}^{i_{2}-1-j_{2}}\\
 & \otimes X_{s,u}^{j_{1}}\otimes X_{s,u}^{j_{2}})\otimes dx_{u}
\end{align*}
\begin{align}
\doteq & \sum_{j_{1}=1}^{i_{1}}\sum_{j_{2}=0}^{i_{2}-1}\int_{s<u<t}P^{\tau(i_{1}-j_{1},i_{1},i_{2}-1-j_{2},j_{2})}\nonumber \\
 & \left(\sum_{\substack{\sigma_{1}\in\mathcal{S}(i_{1}-j_{1},i_{2}-1-j_{2})\\
\sigma_{2}\in\mathcal{S}(j_{1},j_{2})
}
}P^{\sigma_{1}\otimes\sigma_{2}}(X_{0,s}^{i_{1}+i_{2}-j_{1}-j_{2}-1}\otimes X_{s,u}^{j_{1}+j_{2}})\right)\otimes dx_{u}\nonumber \\
\doteq & \sum_{j_{1}=1}^{i_{1}}\sum_{j_{2}=0}^{i_{2}-1}\sum_{\substack{\sigma_{1}\in\mathcal{S}(i_{1}-j_{1},i_{2}-1-j_{2})\\
\sigma_{2}\in\mathcal{S}(j_{1},j_{2})
}
}\overline{P}^{\tau(i_{1}-j_{1},i_{1},i_{2}-1-j_{2},i_{2})}\nonumber \\
 & \circ\overline{P}^{\sigma_{1}\otimes\sigma_{2}}(X_{0,s}^{i_{1}+i_{2}-j_{1}-j_{2}-1}\otimes X_{s,u}^{j_{1}+j_{2}+1}).\label{eq: the second level of signature of signature}
\end{align}
Here $P^{\tau(i_{1}-j_{1},j_{1},i_{2}-1-j_{2},j_{2})},P^{\sigma_{1}\otimes\sigma_{2}}$
are linear transformations on $(\mathbb{R}^{d})^{\otimes(i_{1}+i_{2}-1)}$
defined by 
\[
P^{\tau(i_{1}-j_{1},j_{1},i_{2}-1,j_{2})}(a\otimes b\otimes c\otimes d)=a\otimes c\otimes b\otimes d
\]
for $a\in(\mathbb{R}^{d})^{\otimes(i_{1}-j_{1})},b\in(\mathbb{R}^{d})^{\otimes(i_{2}-1-j_{2})},c\in(\mathbb{R}^{d})^{\otimes j_{1}},d\in(\mathbb{R}^{d})^{\otimes j_{2}},$
and 
\[
P^{\sigma_{1}\otimes\sigma_{2}}(a\otimes b)=P^{\sigma_{1}}(a)\otimes P^{\sigma_{2}}(b)
\]
for $a\in(\mathbb{R}^{d})^{\otimes(i_{1}+i_{2}-j_{1}-j_{2}-1)},b\in(\mathbb{R}^{d})^{\otimes(j_{1}+j_{2})}$.
$\overline{P}^{\tau(i_{1}-j_{1},i_{1},i_{2}-1-j_{2},i_{2})},$ $\overline{P}^{\sigma_{1}\otimes\sigma_{2}}$
are their extensions to $(\mathbb{R}^{d})^{(i_{1}+i_{2})}$ by fixing
the last tensor component. The notation ``$\doteq$'' means that the
identities are formal. For simplicity, we can write (\ref{eq: the second level of signature of signature})
as
\begin{equation}
Y_{s,t}^{2;i_{1},i_{2}}\doteq\sum_{j=2}^{i_{1}+i_{2}}\sum_{\sigma\in\mathcal{A}(i_{1},i_{2},j)}P^{\sigma}(X_{0,s}^{i_{1}+i_{2}-j}\otimes X_{s,t}^{j}),\label{eq: definition of second level}
\end{equation}
where $\mathcal{A}(i_{1},i_{2},j)$ is a set of permutations of order
$i_{1}+i_{2}$ which fix the last component. Of course $\mathcal{A}(i_{1},i_{2},j)$
can be written down explicitly from (\ref{eq: the second level of signature of signature})
but we are not interested in its exact expression. The crucial point
here is that the right hand side of (\ref{eq: definition of second level})
is a well-defined element in $(\mathbb{R}^{d})^{\otimes(i_{1}+i_{2})},$
and it is reasonable to take it as the definition of $Y_{s,t}^{2;i_{1},i_{2}}$.

Inductively, assume that we have defined
\begin{equation}
Y_{s,t}^{n;i_{1},\cdots,i_{n}}=\sum_{j=n}^{i_{1}+\cdots+i_{n}}\sum_{\sigma\in\mathcal{A}(i_{1},\cdots,i_{n},j)}P^{\sigma}(X_{0,s}^{i_{1}+\cdots+i_{n}-j}\otimes X_{s,t}^{j})\label{eq: definition of n-th level}
\end{equation}
for all $1\leqslant i_{1},\cdots,i_{n}\leqslant N$ and $(s,t)\in\Delta,$
where $\mathcal{A}(i_{1},\cdots,i_{n},j)$ is a set of permutations
of order $i_{1}+\cdots+i_{n}$ which fix the last component. Then
for $1\leqslant i_{1},\cdots,i_{n+1}\leqslant N$, formally we have
\begin{eqnarray*}
 &  & Y_{s,t}^{n+1;i_{1},\cdots,i_{n+1}}\\
 & = & \int_{s<u<t}Y_{s,u}^{n;i_{1},\cdots,i_{n}}\otimes dy_{u}^{i_{n+1}}\\
 & = & \sum_{j=n}^{i_{1}+\cdots+i_{n}}\sum_{\sigma\in\mathcal{A}(i_{1},\cdots,i_{n},j)}\int_{s<u<t}\overline{P}^{\sigma}(X_{0,s}^{i_{1}+\cdots+i_{n}-j}\otimes X_{s,u}^{j}\otimes X_{0,u}^{i_{n+1}-1})\otimes dx_{u},
\end{eqnarray*}
where $\overline{P}^{\sigma}$ is the extension of $P^{\sigma}$ to
$(\mathbb{R}^{d})^{\otimes(i_{1}+\cdots+i_{n+1}-1)}$ by fixing the
last $i_{n+1}-1$ tensor components. By applying the same formal calculation
as in the case when $n=2,$ we obtain that 
\begin{equation}
Y_{s,t}^{n+1;i_{1},\cdots,i_{n},i_{n+1}}\doteq\sum_{j=n+1}^{i_{1}+\cdots+i_{n+1}}\sum_{\sigma\in\mathcal{A}(i_{1},\cdots,i_{n+1},j)}P^{\sigma}(X_{0,s}^{i_{1}+\cdots+i_{n+1}-j}\otimes X_{s,t}^{j}),\label{eq: definition of level n+1}
\end{equation}
where $\mathcal{A}(i_{1},\cdots,i_{n+1},j)$ is a set of permutations
of order $i_{1}+\cdots+i_{n+1}$ which fix the last component. We
take the right hand side of (\ref{eq: definition of level n+1}) as the definition
of $Y_{s,t}^{n+1;i_{1},\cdots,i_{n+1}}.$

Therefore, for each $n\geqslant1,$ $1\leqslant i_{1},\cdots,i_{n}\leqslant N$
and $(s,t)\in\Delta,$ we have defined $Y_{s,t}^{n;i_{1},\cdots,i_{n}}\in(\mathbb{R}^{d})^{\otimes(i_{1}+\cdots+i_{n})}$
from a purely algebraic point of view. In the case when one of those
$i_{j}$ equals zero, we simply let $Y_{s,t}^{n;i_{1},\cdots,i_{n}}=0.$
It remains to verify that this definition is exactly what we need.
Such verification is a straight forward consequence of the fact that
$\mathbf{X}$ is a geometric $p'$-rough path for every $p'\in(p,\lfloor p\rfloor+1)$
(c.f. \cite{FV2010}, Corollary 8.24).
\begin{prop}
\label{prop: signature of signature}Let $\mathbf{X}\in WG\Omega_{p}(\mathbb{R}^{d})$
and $N\in\mathbb{N}$. Then the formula (\ref{eq: definition of n-th level})
together with 
\[
Y_{s,t}^{n;i_{1},\cdots,i_{n}}=0,\ \mathrm{if\ one\ of\ }i_{j}=0,
\]
defines a multiplicative functional $\mathbb{Y}$ on the infinite
dimensional tensor algebra $T((E_{N}))$ over $E_{N}$ which takes
values in the group $G(E_{N})$ of exponential Lie series over $E_{N}.$
Moreover, for each $n\geqslant1,$ 
\begin{equation}
Y_{s,t}^{(n)}:=(1,Y_{s,t}^{1},\cdots,Y_{s,t}^{n}),\ (s,t)\in\Delta,\label{eq: truncated signature path of y}
\end{equation}
has finite $p$-variation in the sense of Definition \ref{def: p-rough path}.
Therefore, $Y^{(\lfloor p\rfloor)}$ defines a weakly geometric $p$-rough
path $\mathbf{Y}$ over $E_{N}$ whose signature path is $\mathbb{Y}$
with first level path being the truncated signature path of $\mathbf{X}$
up to degree $N.$

Finally, the signature of $\mathbf{Y}$ is determined by the signature
of $\mathbf{X}$ explicitly through the following formula:
\begin{eqnarray}
 &  & S(\mathbf{Y})_{0,1}^{n;i_{1},\cdots,i_{n}}\nonumber \\
 & = & \begin{cases}
\sum_{\substack{\sigma_{1}\in\mathcal{S}(i_{1},i_{2}-1)\\
\sigma_{2}\in\mathcal{S}(i_{1}+i_{2},i_{3}-1)\\
\cdots\\
\sigma_{n-1}\in\mathcal{S}(i_{1}+\cdots+i_{n-1},i_{n}-1)
}
}P^{\sigma_{n-1}\circ\cdots\circ\sigma_{1}}(X_{0,1}^{i_{1}+\cdots+i_{n}}), & 1\leqslant i_{1},\cdots,i_{n}\leqslant N,\\
0, & i_{j}=0\ \mathrm{for\ some}\ j,
\end{cases}\label{eq: signature of y}
\end{eqnarray}
where each shuffle $\sigma_{j}$ is regarded as a permutation of order
$i_{1}+\cdots+i_{n}$ by fixing the last $i_{j+1}+\cdots+i_{n}+1$
components.\end{prop}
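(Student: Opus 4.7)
The plan is to exploit the fact, recorded in Corollary 8.24 of Friz--Victoir, that any weakly geometric $p$-rough path $\mathbf{X}$ is a geometric $p'$-rough path for every $p'\in(p,\lfloor p\rfloor+1)$; that is, $\mathbf{X}$ is the $p'$-variation limit of the canonical lifts $\mathbf{X}^{(k)}$ of a sequence of smooth paths $x^{(k)}$. For a smooth $x^{(k)}$ every step in the formal calculation leading to (\ref{eq: definition of n-th level}) is a rigorous Riemann--Stieltjes identity: the truncated signature path $y^{(k)}$ is itself a bounded-variation path in $E_N$, its iterated integrals are defined classically, and the insertion $X^{(k),i}_{0,u}=\sum_{j}P^{\tau}(X^{(k),i-j}_{0,s}\otimes X^{(k),j}_{s,u})$ is nothing but Chen's identity, while the rearrangement of products into shuffles of increments is precisely Theorem \ref{thm: shuffle product formula}. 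Hence at the smooth level (\ref{eq: definition of n-th level}) correctly produces the iterated integrals of $y^{(k)}$, so the combinatorial sets $\mathcal{A}(i_1,\ldots,i_n,j)$ are identified by this rigorous Fubini--shuffle bookkeeping.

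The second step is the passage to the limit. The right-hand side of (\ref{eq: definition of n-th level}) is a finite sum of images of tensor products $X^{i_1+\cdots+i_n-j}_{0,s}\otimes X^{j}_{s,t}$ under fixed linear permutation operators $P^\sigma$, so it is jointly continuous in $\mathbf{X}$ with respect to the $p'$-variation topology. On the other hand, the iterated integrals of the smooth approximants $y^{(k)}$ converge to those of the limiting signature path by Lyons' extension theorem (Theorem \ref{thm: Lyons' extension}) applied to $\mathbf{X}^{(k)}\to\mathbf{X}$. Matching the two limits identifies $\mathbb{Y}$ as defined by (\ref{eq: definition of n-th level}), and simultaneously transfers Chen's identity and the membership in $G(E_N)$ from the approximants to $\mathbb{Y}$, since both are closed conditions in the $p'$-variation topology.

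The third step is the $p$-variation bound. By (\ref{eq: definition of n-th level}), each component $Y^{n;i_1,\ldots,i_n}_{s,t}$ is a linear combination of $P^\sigma(X^{i_1+\cdots+i_n-j}_{0,s}\otimes X^{j}_{s,t})$ with $j\geqslant n$. The first factor is uniformly bounded (since $s\in[0,1]$) by (\ref{eq: Lyons' extension}) applied at the endpoint, and the second factor enjoys the factorial decay $|X^{j}_{s,t}|\leqslant C_p\omega(s,t)^{j/p}/(j/p)!$ with $j\geqslant n$. Summing $p/n$-th powers over any partition and using the super-additivity of $\omega$ yields $\sum_l|Y^{n;i_1,\ldots,i_n}_{t_{l-1},t_l}|^{p/n}\leqslant C\,\omega(0,1)$, which is precisely finite $p$-variation at level $n$. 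Choosing $n=\lfloor p\rfloor$ shows that $Y^{(\lfloor p\rfloor)}$ is a weakly geometric $p$-rough path $\mathbf{Y}$ over $E_N$, and by construction its first level is the truncated signature path of $\mathbf{X}$; by uniqueness of Lyons' extension $\mathbb{Y}$ must then coincide with the signature of $\mathbf{Y}$.

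Finally, to obtain (\ref{eq: signature of y}) we specialise (\ref{eq: definition of n-th level}) to $s=0$. Since $X^{k}_{0,0}=0$ for $k\geqslant1$ while $X^{0}_{0,0}=1$, only the top index $j=i_1+\cdots+i_n$ survives, producing $S(\mathbf{Y})^{n;i_1,\ldots,i_n}_{0,1}=\sum_{\sigma\in\mathcal{A}(i_1,\ldots,i_n,\,i_1+\cdots+i_n)}P^\sigma(X^{i_1+\cdots+i_n}_{0,1})$. The explicit shuffle description of $\mathcal{A}$ in (\ref{eq: signature of y}) is then read off by tracing through the inductive construction: at each iteration $n\mapsto n+1$ the formal computation absorbs one extra differential $dx_u$ by invoking the shuffle product formula, and this introduces precisely one shuffle $\sigma_j\in\mathcal{S}(i_1+\cdots+i_j,\,i_{j+1}-1)$ acting as the identity on the last $i_{j+1}+\cdots+i_n+1$ tensor slots, exactly matching the composition displayed. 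The main obstacle in the whole argument is combinatorial rather than analytic: one must verify carefully that the recursive unfolding of shuffles produces the composition $\sigma_{n-1}\circ\cdots\circ\sigma_1$ with the stated fixing conventions, and that the limiting procedure respects $p'$-variation uniformly so that the identification of the two descriptions of $\mathbb{Y}$ is valid for arbitrary weakly geometric $p$-rough paths.
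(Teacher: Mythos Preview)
Your proposal is correct and follows essentially the same route as the paper: approximate $\mathbf{X}$ by bounded-variation paths via Friz--Victoir Corollary~8.24, observe that the formal calculation is rigorous at that level, pass to the limit using continuity and closedness of $G^{n}(E_{N})$, read off the $p$-variation bound from the explicit formula together with the factorial decay, and specialise to $s=0$ for the signature formula. One minor point: where you invoke Theorem~\ref{thm: Lyons' extension} for the convergence of the iterated integrals of $y^{(k)}$, the paper (and the underlying argument) really needs the \emph{continuity} of the signature map in $p'$-variation (Lyons 1998, Theorem~2.2.2) rather than the extension theorem itself; this is implicit in your phrasing but worth citing precisely.
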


\begin{proof}
Given $p'\in(p,\lfloor p\rfloor+1)$, let $x_{n}$ be a sequence of
continuous paths with bounded  variation whose lifting to $G^{\lfloor p\rfloor}(\mathbb{R}^{d})$
converges to $\mathbf{X}$ in $p'$-variation. The existence of $x_n$ is guaranteed by \cite{FV2010}, Corollary 8.24. Since the truncated
signature path $y_{n}$ of $x_{n}$ up to degree $N$ has bounded
 variation, it follows that each step in the previous formal calculation
becomes exact equality when $\mathbf{X}$ is replaced by $x_{n}.$
In other words, the signature path of $y_{n}$ given by iterated path
integrals against $y_{n}$ coincides with the definition given in
the previous formal construction. Therefore, it defines a multiplicative
functional on $T((E_{N}))$ satisfying the shuffle product formula. 

For each $n\geqslant1,$ since the free nilpotent group $G^{n}(E_{N})$
of step $n$ over $E_{N}$ is a closed subset of $T^{(n)}(E_{N}),$
by the continuity of signature (c.f. \cite{Lyons1998}, Theorem 2.2.2)
we conclude the first part of the proposition. Moreover, from the
explicit formula (\ref{eq: definition of n-th level}), it is easy
to see that $Y_{s,t}^{n;i_{1},\cdots,i_{n}}$ has the following estimate:
\[
|Y_{s,t}^{n;i_{1},\cdots,i_{n}}|\leqslant C(N,n,\omega(0,1))\cdot\omega(s,t)^{\frac{n}{p}},
\]
where 
\[
\omega(s,t):=\sum_{i=1}^{n}\sup_{\mathcal{P}_{[s,t]}}\sum_{l}\left|X_{t_{l-1},t_{l}}^{i}\right|^{\frac{p}{i}},\ (s,t)\in\Delta,
\]
and $C(N,n,\omega(0,1))$ is a constant depending only on $N,n,\omega(0,1).$
Therefore, $Y^{(n)}$ defined by (\ref{eq: truncated signature path of y})
has finite $p$-variation and the conclusion of the second part holds.
Finally, the formula (\ref{eq: signature of y}) for the signature
of $\mathbf{Y}$ can be seen directly by letting $s=0$ in the previous
formal calculation (in this case the permutation sets $\mathcal{A}(i_{1},\cdots,i_{n},j)$
can be written down easily).
\end{proof}

\subsection{Using Extended Signatures}

As mentioned before, we are going to reconstruct the tree-reduced
path by using extended signatures. In particular, we will integrate
along compactly supported one forms with continuous derivatives up
to order $\alpha:=\lfloor p\rfloor+1$. 

Let $\mathbf{X}$ be a weakly geometric $p$-rough path, and let $(\phi^{1},\cdots,\phi^{n})$
be a finite sequence of compactly supported $C^{\alpha}$-one forms
on $\mathbb{R}^{d}.$
\begin{defn}
The first level of the iterated path integral 
\[
\int_{0<t_{1}<\cdots<t_{n}<1}\phi^{1}(d\mathbf{X}_{t_{1}})\cdots\phi^{n}(d\mathbf{X}_{t_{n}})
\]
 is called the \textit{extended signature} of $\mathbf{X}$ along
$(\phi^{1},\cdots,\phi^{n})$, and it is denoted by $[\phi^{1},\cdots,\phi^{n}](x).$ 
\end{defn}

In general, the extended signature of $\mathbf{X}$ along $(\phi^{1},\cdots,\phi^{n})$
can either be interpreted through the approximation 
\begin{equation}
[\phi^{1},\cdots,\phi^{n}](x)=\lim_{k\rightarrow\infty}\int_{0<t_{1}<\cdots<t_{n}<1}\phi^{1}(dx_{t_{1}}^{(k)})\cdots\phi^{n}(dx_{t_{n}}^{(k)}),\label{eq: extended signature via approximation}
\end{equation}
where $x^{(k)}$ is a sequence of continuous paths with bounded variation
whose lifting to $G^{\lfloor p\rfloor}(\mathbb{R}^{d})$ converges
to $\mathbf{X}$ in $p'$-variation for some $p'\in(p,\lfloor p\rfloor+1)$, or through the unique solution to the rough differential equation
\[
\begin{cases}
dx_{t}^{i}=dx_{t}^{i}, & 1\leqslant i\leqslant d,\\
dy_{t}^{j}=y_{t}^{j-1}\sum_{i=1}^{d}\phi_{i}^{j}(x_{t})dx_{t}^{i}, & 1\leqslant j\leqslant n,\\
x_{0}=0,y_{0}=0,
\end{cases}
\]
where $y_{t}^{0}:=1.$

\begin{rem}
In the notation $[\phi^{1},\cdots,\phi^{n}](x)$, we have used
small ``$x$'', which denotes the first level path of $\mathbf{X}$,
to emphasize that the integral is essentially constructed on $\mathbb{R}^{d}$
although the rigorous definition relies on the rough path nature of
$\mathbf{X}$.
Later on we will use extended signatures constructed on $E_{N}$ along
one forms over $E_{N}$ for $N\geqslant\lfloor p\rfloor.$
\end{rem}

Let $g$ be the signature of $\mathbf{X}$ and let $\phi$ be a compactly
supported $C^{\alpha}$-one form. The starting point of our reconstruction is the crucial fact that the integral $\int_{0}^{1}\phi(d\mathbf{X}_{t})$ can be explicitly
reconstructed from the knowledge of $g$ and $\phi.$

Indeed, let $n_{0}\geqslant1$ be such that $B_{n_{0}}=\{x\in\mathbb{R}^d:\ |x|\leqslant n_0\}$ contains the support
of $\phi.$ For each $n\geqslant n_{0},$ let $p_{n}$ be a polynomial
one form such that 
\[
\sup_{0\leqslant j\leqslant\alpha}\sup_{B_{n}}\left|D^{j}(\phi-p_{n})\right|\leqslant\frac{1}{n}.
\]
The existence of $p_{n}$ is guaranteed by the work of Bagby-Bos-Levenberg
\cite{BBL2002}, Theorem 1. Moreover, from their proof the construction
of $p_{n}$ is explicit. 

Since $p_{n}$ is a polynomial one form, the integral $\int_{0}^{1}p_{n}(d\mathbf{X}_{t})$
can be directly computed from $g$ using the shuffle
product formula. Now it remains to show the following simple fact.
\begin{prop}
The value of the integral $\int_{0}^{1}\phi(d\mathbf{X}_{t})$ is
given by 
\[
\int_{0}^{1}\phi(d\mathbf{X}_{t})=\lim_{n\rightarrow\infty}\int_{0}^{1}p_{n}(d\mathbf{X}_{t}).
\]
\end{prop}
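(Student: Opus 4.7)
The plan is to combine the continuity of the rough integral in its one-form argument with a cut-off that localizes the polynomial $p_n$ to a neighborhood of the path, so that it becomes compactly supported and close to $\phi$ in $C^\alpha$-norm.

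Concretely, since the first-level path $x_t=X^1_{0,t}$ is continuous on $[0,1]$, its image lies in some closed ball $B_M$; by enlarging $M$ I may assume $M\geqslant n_0$, so that also $\mathrm{supp}\,\phi\subseteq B_M$. I pick a smooth cut-off $\chi:\mathbb{R}^d\to[0,1]$ with $\chi\equiv 1$ on $B_{M+1}$ and $\mathrm{supp}\,\chi\subseteq B_{M+2}$, and set $\widetilde{p}_n:=\chi\, p_n$, which is a compactly supported $C^\alpha$ one-form. Since $\chi\phi=\phi$ and $\|\phi-p_n\|_{C^\alpha(B_{M+2})}\leqslant 1/n$ for $n\geqslant M+2$, the Leibniz rule yields
$$\|\phi-\widetilde{p}_n\|_{C^\alpha(\mathbb{R}^d)}=\|\chi(\phi-p_n)\|_{C^\alpha(\mathbb{R}^d)}\leqslant\frac{C(\chi)}{n}.$$

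Next I will establish the locality identity $\int_0^1 p_n(d\mathbf{X}_t)=\int_0^1 \widetilde{p}_n(d\mathbf{X}_t)$, which is what makes the left-hand side sensible despite $p_n$ being unbounded. I do this via the approximation definition (\ref{eq: extended signature via approximation}): taking bounded-variation paths $x^{(k)}$ whose $G^{\lfloor p\rfloor}$-lifts converge to $\mathbf{X}$ in $p'$-variation for some $p'\in(p,\lfloor p\rfloor+1)$, first-level uniform convergence forces the images of $x^{(k)}$ to lie in $B_{M+1}$ for all large $k$, where $\chi\equiv 1$ and so $p_n$ and $\widetilde{p}_n$ agree identically. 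The classical Riemann-Stieltjes integrals $\int_0^1 p_n(dx^{(k)})$ and $\int_0^1 \widetilde{p}_n(dx^{(k)})$ are thus equal for such $k$, and passing to the limit gives the claim.

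Finally I invoke the standard continuity of rough integration in the integrand (cf.\ Lyons \cite{Lyons1998}, Friz-Victoir \cite{FV2010}), which for any compactly supported $C^\alpha$ one-form $\eta$ yields $|\int_0^1\eta(d\mathbf{X}_t)|\leqslant C(p,\mathbf{X})\,\|\eta\|_{C^\alpha(\mathbb{R}^d)}$. Applied to $\eta=\phi-\widetilde{p}_n$ and combined with the locality identity,
$$\left|\int_0^1 \phi(d\mathbf{X}_t)-\int_0^1 p_n(d\mathbf{X}_t)\right|=\left|\int_0^1(\phi-\widetilde{p}_n)(d\mathbf{X}_t)\right|\leqslant\frac{C'}{n}\longrightarrow 0.$$
The main obstacle is the locality identity, since $p_n$ is neither bounded nor compactly supported and so $\int_0^1 p_n(d\mathbf{X}_t)$ itself requires justification; the approximation argument above resolves this cleanly, after which the proposition follows from a single application of the continuity estimate, which is well within the regularity budget ($\alpha=\lfloor p\rfloor+1>p$).
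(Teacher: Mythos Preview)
Your proof is correct and rests on the same core idea as the paper's: localize to a ball containing the image of the path and then invoke continuity of the rough integral in its one-form argument. The paper's version is terser: it simply observes that $x([0,1])\subset\subset B_{n_1}$ for some $n_1\geqslant n_0$, so $\|\phi-p_n\|_{C^\alpha(B_{n_1})}\leqslant 1/n$ for $n\geqslant n_1$, and then cites a localized continuity statement (Friz--Victoir, Theorem 10.47) directly, remarking that since $\mathbf{X}$ is fixed the relevant norm is $C^\alpha$ over $B_{n_1}$. Your route makes this localization explicit via the cut-off $\chi$ and the locality identity $\int_0^1 p_n(d\mathbf{X}_t)=\int_0^1 \widetilde{p}_n(d\mathbf{X}_t)$, thereby reducing to a global $C^\alpha(\mathbb{R}^d)$ continuity estimate for compactly supported integrands. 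This buys you a cleaner justification of why the unbounded polynomial $p_n$ causes no trouble, at the cost of a few extra lines; the paper effectively absorbs that step into the cited reference.
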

\begin{proof}
Since $x([0,1])$ is a compact subset of $\mathbb{R}^{d},$ we know
that $x([0,1])\subset\subset B_{n_{1}}$ for some $n_{1}\geqslant n_{0}$.
Therefore,
\[
\sup_{0\leqslant j\leqslant\alpha}\sup_{B_{n_{1}}}\left|D^{j}(\phi-p_{n})\right|<\frac{1}{n},\ \forall n\geqslant n_{1}.
\]
The result follows from the continuity of the integration map $\phi\mapsto\int_{0}^{1}\phi(d\mathbf{X}_{t})$
(c.f. \cite{FV2010}, Theorem 10.47). Note that here the path $\mathbf{X}$
is fixed, thus the continuity holds under the $C^{\alpha}$-norm over
$B_{n_{1}}.$ 
\end{proof}

The same argument applies to extended signatures. In particular, for
a given finite sequence $(\phi^{1},\cdots,\phi^{n})$ of compactly
supported $C^{\alpha}$-one forms, the extended signature $[\phi^{1},\cdots,\phi^{n}](x)$
can be reconstructed from the knowledge of $g$ and these one forms.

In our reconstruction problem, as we have pointed out before, we shall
integrate over $E_{\lfloor p\rfloor}$ rather than over $\mathbb{R}^{d}.$
More generally, given $N\geqslant\lfloor p\rfloor$, let $y$ be the
truncated signature path of $\mathbf{X}$ up to some degree $N$,
and let $\mathbf{Y}$ be the weakly geometric $p$-rough path over
$E_{N}$ defined in Proposition \ref{prop: signature of signature}.
We know that the signature of $\mathbf{Y}$ is determined by $g$
through the formula (\ref{eq: signature of y}). Therefore, given
a finite sequence $(\Phi^{1},\cdots,\Phi^{n})$ of compactly supported
$C^{\alpha}$-one forms on $E_{N},$ the extended signature $[\Phi^{1},\cdots,\Phi^{n}](y)$
can be reconstructed from the knowledge of $g$ and these one forms
on $E_{N}.$

\section{The Reconstruction: Non-self-intersecting Case}

In this section, we develop our reconstruction for the case when the
tree-reduced weakly geometric $p$-rough paths are simple. Although
we could treat the general case in one go, a good understanding
of the non-self-intersecting case is very helpful.

Recall that 
\[
E_{\lfloor p\rfloor}=\bigoplus_{i=0}^{\lfloor p\rfloor}(\mathbb{R}^{d})^{\otimes i}
\]
is an Euclidean space of dimension $D=1+d+\cdots+d^{\lfloor p\rfloor}$
with Euclidean norm given by 
\[
\|\cdot\|_{E_{\lfloor p\rfloor}}=\sum_{i=0}^{\lfloor p\rfloor}\|\cdot\|_{(\mathbb{R}^{d})^{\otimes i}}.
\]
Elements in $E_{\lfloor p\rfloor}$ are of the form $a=(a^{I})_{0\leqslant|I|\leqslant\lfloor p\rfloor}$,
where $I$ is a word over the alphabet $\{1,\cdots,d\}$. Here the order
of coordinates $a^{I}$ in a homogeneous tensor product $(\mathbb{R}^{d})^{\otimes k}$
is not important, but the increasing order with respect to the tensor
degree is important (in order to make use of the decay of signature
as we will see in Section 6).

In the rest of this section, we assume that $\mathbf{X}$ is a simple
tree-reduced weakly geometric $p$-rough path with signature $g.$

\subsection{Recovering the Discrete Route in a Given Geometric Scheme}

Let $\{K_{0},\cdots,K_{r}\}$ be a finite family of bounded domains
in $E_{\lfloor p\rfloor}$ whose closures are mutually disjoint. Suppose
that the path $\mathbf{X}$ starts in $K_{0}$.

We define $L$ to be the total number of domains $K_{i}$ visited
by $\mathbf{X}$ in order (excluding the initial one $K_{0}$). $L$
is finite from the continuity of $\mathbf{X}$ and the disjointness
of those $\overline{K_{i}}$. Let 
\[
\{(\tau_{k},m_{k})\in[0,1]\times\{0,\cdots,r\}:\ 0\leqslant k\leqslant L\}
\]
be the corresponding sequence of entry times together with domains
visited, where $\tau_{0}=0$ and $m_{0}=0.$ Here we label the domains
by their subscripts $\{0,\cdots,r\}.$ 

We should point out that revisiting the same domain before
entering other ones does not count, but revisiting after entering
some other domain does count. Moreover, we only consider entrance of these domains (which are open) but do not
consider the case of bouncing at the boundaries. 

Figure \ref{fig: new figure 1} illustrates the notions in dimension $2$. In this example $L=7$, and the discrete route of the underlying path is given by the 
word $(0,3,4,5,0,1,0,3).$

\begin{figure} 
\begin{center} 
\includegraphics[scale=0.35]{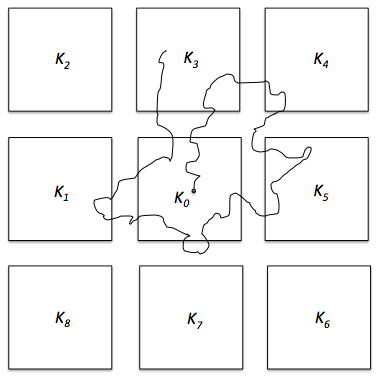}
\protect\caption{This figure illustrates the construction of the discrete route of a path in dimension $2$.
In this example $L=7$, and the discrete route of the underlying path is given by the 
word $(0,3,4,5,0,1,0,3).$}\label{fig: new figure 1}
\end{center}
\end{figure}

The next step is to show that in the non-self-intersecting case, the
ordered sequence $(m_{0},\cdots,m_{L})$ of domains visited by $\mathbf{X}$
can be reconstructed from the signature $g$ through computing extended
signatures. 

To be more precise, we have the following result. The construction
of the one forms involved is motivated from the work of \cite{BGLY2014}. 
\begin{prop}
\label{prop: existence of one forms} $Let$ $S=\{m_{0},\cdots,m_{L}\}\subset\{0,\cdots,r\}$
be the associated set of domains visited by $\mathbf{X}.$

(1) For each $m\in S,$ there exists a $C^{\alpha}$-one form $\Phi_{m}$
supported on $\overline{K_{m}},$ such that the extended signature
\[
\left[\Phi_{m_{0}},\cdots,\Phi_{m_{L}}\right](\mathbf{X})\neq0.
\]
Here we should regard $\mathbf{X}$ as the first level of the rough
path $\mathbf{Y}$ in Proposition \ref{prop: signature of signature}
with $N=\lfloor p\rfloor,$ and the extended signature is constructed
on $E_{\lfloor p\rfloor}$ for $\mathbf{Y}$ (see the last paragraph
of Section 4). 

(2) For $0\leqslant m\leqslant r,$ let $\Psi_{m}$ be any given $C^{\alpha}$-one
form supported on $\overline{K_{m}}$. Suppose that $\mathbf{n=}(n_{0}=0,n_{1},\cdots,n_{l})$
is an arbitrary word over the alphabet $\{0,\cdots,r\}$ such that
$n_{k}\neq n_{k-1}$ for $1\leqslant k\leqslant l$. If $n_{k}\notin S$
for some $1\leqslant k\leqslant l$, or if it is different from the
word $\mathbf{m}=(m_{0},\cdots,m_{L})$ when $l\geqslant L$, then
the extended signature 
\[
[\Psi_{n_{0}},\cdots,\Psi_{n_{l}}](\mathbf{X})=0.
\]
\end{prop}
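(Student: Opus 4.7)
The plan is to decompose the extended signature temporally using the entry/exit structure of the discrete route, thereby reducing both parts of the proposition to a combinatorial subsequence-matching problem.

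First I would parametrize the trajectory by its entries: let $\tau_k<\sigma_k$ denote respectively the entry and exit times of the $k$-th visit to $K_{m_k}$, so that $[0,1]$ decomposes into ``inside'' intervals $[\tau_k,\sigma_k]$ (on which $\mathbf{X}_t\in\overline{K_{m_k}}$) separated by ``outside'' intervals on which $\mathbf{X}_t\notin\bigcup_m\overline{K_m}$. Iteratively applying Chen's identity to $[\Psi_{n_0},\ldots,\Psi_{n_l}](\mathbf{X})$ expands it as a sum over non-decreasing assignments $j\mapsto k_j$ placing each integration time $t_j$ into an interval of this decomposition. Since $\operatorname{supp}(\Psi_{n_j})\subseteq\overline{K_{n_j}}$ and the $\overline{K_m}$ are pairwise disjoint, on any subinterval where $\mathbf{X}$ stays outside $\overline{K_{n_j}}$ both $\Psi_{n_j}$ and all of its derivatives vanish identically along the path, so by locality of rough integration the corresponding factor is zero. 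Only ``matching'' assignments survive, namely those with $t_j\in[\tau_{k_j},\sigma_{k_j}]$ and $m_{k_j}=n_j$ for every $j$.

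Observing next that placing two consecutive times $t_j,t_{j+1}$ in the same inside interval would force $n_j=n_{j+1}$, contradicting the no-consecutive-repetition hypothesis on $\mathbf{n}$, the surviving assignments correspond bijectively to strictly increasing subsequences $0\leqslant k_0<k_1<\cdots<k_l\leqslant L$ with $m_{k_j}=n_j$. From this part (2) is immediate: under (a), no index $k'\in\{0,\ldots,L\}$ satisfies $m_{k'}=n_k$, so no matching subsequence exists; under (b), since $\{0,\ldots,L\}$ has only $L+1$ elements, a strictly increasing subsequence of length $l+1\geqslant L+1$ can only exist when $l=L$ with $k_j=j$, forcing $\mathbf{n}=\mathbf{m}$ against the hypothesis.

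For part (1), when $\mathbf{n}=\mathbf{m}$ the unique surviving term factors as
\[
[\Phi_{m_0},\ldots,\Phi_{m_L}](\mathbf{X})=\prod_{k=0}^{L}\int_{\tau_k}^{\sigma_k}\Phi_{m_k}(d\mathbf{X}_t).
\]
Here the non-self-intersecting hypothesis is essential: each arc $\mathbf{X}|_{[\tau_k,\sigma_k]}$ is a simple nonconstant curve in $\overline{K_{m_k}}$ with distinct endpoints, and arcs associated with distinct entries into the same $K_m$ are pairwise disjoint. For each $m\in S$ I would take $\Phi_m=\sum_{k:\,m_k=m}\omega_k$, where $\omega_k$ is compactly supported in a small tubular neighborhood inside the interior of $K_m$ of a subarc of $\mathbf{X}|_{[\tau_k,\sigma_k]}$, and has the form $\omega_k=\phi_k\,df_k$ with $\phi_k$ a bump function equal to $1$ on the subarc and $f_k$ a linear functional aligned with its secant vector; this makes each factor evaluate to a nonzero quantity of the form $|\mathbf{X}_{t_+}-\mathbf{X}_{t_-}|^2$, and only the self-term survives since the $\omega_k$'s have disjoint supports.

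The main obstacle is the locality statement used to annihilate non-matching factors: one must rigorously verify that the rough integral $\int\Psi_{n_j}(d\mathbf{X})$ vanishes on time intervals where $\mathbf{X}$ avoids $\operatorname{supp}(\Psi_{n_j})$, with care taken when the path grazes $\partial K_{n_j}$ without entering the open interior. The cleanest route is through density of bounded-variation liftings in $p'$-variation (\cite{FV2010}, Corollary 8.24) combined with continuity of the rough integration map (\cite{FV2010}, Theorem 10.47): on an interval where $\mathbf{X}$ lies strictly in the open set $E_{\lfloor p\rfloor}\setminus\overline{K_{n_j}}$, compactness of the image gives positive separation, so smooth approximations are also separated for large index and their integrals vanish identically; continuity then transfers the vanishing to $\mathbf{X}$.
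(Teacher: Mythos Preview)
Your overall strategy---decomposing the extended signature along the entry-time partition and reducing both parts to a subsequence-matching problem---is exactly the approach the paper takes (it packages the factorization and vanishing statements by citing \cite{BG2015}, Lemmas 4.2 and 5.1, which contain essentially your argument). Your treatment of Part (2) is correct. One minor imprecision: your inside/outside decomposition need not exist as stated, since during the $k$-th visit the path may leave $K_{m_k}$ and re-enter it several times before entering another domain, so the claimed dichotomy ``$\mathbf{X}_t\in\overline{K_{m_k}}$ on $[\tau_k,\sigma_k]$ and $\mathbf{X}_t\notin\bigcup_m\overline{K_m}$ on the gaps'' can fail. The paper works instead with successive entry times $\tau_0<\cdots<\tau_L$ and uses only that on $[\tau_k,\tau_{k+1}]$ the path does not enter $K_{m'}$ for $m'\neq m_k$.

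The real gap is in your one-form construction for Part (1). Taking $\omega_k=\phi_k\,df_k$ with $f_k$ linear does not make the factor equal to $|\mathbf{X}_{t_+}-\mathbf{X}_{t_-}|^2$ as you assert: the path must traverse the transition zone where $\phi_k$ decays from $1$ to $0$ when entering and exiting the tubular neighborhood, and on those portions the integrand $\phi_k(\mathbf{X}_t)\langle v_k,d\mathbf{X}_t\rangle$ contributes uncontrolled terms that may cancel the main contribution. Shrinking the tube does not obviously help for a genuine rough path, since $\|\phi_k\|_{C^\alpha}$ blows up while the $p$-variation over the transition intervals shrinks at a rate governed by the path, not by you. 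The paper's construction eliminates these transition contributions exactly by taking $\Phi_m=G\,dF$ where $F$ is identically $0$ on a neighborhood $U_1$ of an initial subarc $\mathbf{X}|_{[s,s']}$ and identically $1$ on a neighborhood $U_2$ of a terminal subarc $\mathbf{X}|_{[t',t]}$ (so that $dF\equiv 0$ there), while $G$ equals $1$ on a neighborhood of the middle arc $\mathbf{X}|_{[s',t']}$ and $0$ on a neighborhood of $\mathbf{X}|_{[\tau_k,s]\cup[t,\tau_{k+1}]}$. Simpleness makes all the required separations possible, and one obtains $\int_{\tau_k}^{\tau_{k+1}}\Phi_m(d\mathbf{Y}_t)=F(\mathbf{X}_{t'})-F(\mathbf{X}_{s'})=1$ on the nose.
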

\begin{proof}
Given $m\in S,$ let $0\leqslant k\leqslant L$ be such that $m_{k}=m$.
By the definition of $\tau_{k}$, there exist $\tau_{k}<s<t<\tau_{k+1}$
such that $\mathrm{Im}(\mathbf{X}|_{[s,t]})\subset K_{m}$ (here we
set $\tau_{L+1}=1$). Let $s',t'$ satisfy $s<s'<t'<t.$ Since $\mathbf{X}$
is simple, we know that 
\[
\mathrm{Im}(\mathbf{X}|_{[s,s']})\bigcap\mathrm{Im}(\mathbf{X}|_{[t',t]})=\emptyset
\]
and 
\[
\mathrm{Im}(\mathbf{X}|_{[s',t']})\bigcap\mathrm{Im}(\mathbf{X}|_{[\tau_{k},s]\cup[t,\tau_{k+1}]})=\emptyset.
\]
It follows that there exist open neighborhoods $U_{1},U_{2},V_{1},V_{2}$
of $\mathrm{Im}(\mathbf{X}|_{[s,s']}),$ $\mathrm{Im}(\mathbf{X}|_{[t',t]}),$
$\mathrm{Im}(\mathbf{X}|_{[s',t']}),$ $\mathrm{Im}(\mathbf{X}|_{[\tau_{k},s]\cup[t,\tau_{k+1}]})$
respectively, such that 
\begin{equation}
U_{1}\bigcup V_{1}\bigcup U_{2}\subset\subset K_{m}\label{eq: neighborhoods in K_m}
\end{equation}
and
\[
U_{1}\bigcap U_{2}=V_{1}\bigcap V_{2}=\emptyset.
\]
Let $F,G$ be two $C^{\alpha}$-functions on $E_{\lfloor p\rfloor}$
such that 
\[
F=0\ \mathrm{on}\ U_{1},\ F=1\ \mathrm{on}\ U_{2},
\]
and 
\[
G=0\ \mathrm{on}\ V_{2},\ G=1\ \mathrm{on}\ V_{1},
\]
respectively. We define $\Phi_{k}=GdF.$ From the construction it
is straight forward to see that
\[
\int_{\tau_{k}}^{\tau_{k+1}}\Phi_{k}(d\mathbf{Y}_{t})=F(\mathbf{X}_{t'})-F(\mathbf{X}_{s'})=1.
\]
Moreover, the value of the integral depends only on the definition
of $\Phi_{k}$ on $U_{1}\cup U_{2}\cup V_{1}\cup V_{2}$. Together
with the fact that $\Phi_{k}=0$ on $V_{2}$ and (\ref{eq: neighborhoods in K_m}),
we can certainly modify the definition of $\Phi_{k}$ without changing
its values on $U_{1}\cup U_{2}\cup V_{1}\cup V_{2}$ so that it is
supported on $\overline{K_{m}}$. Furthermore, it follows again from
the simpleness of $\mathbf{X}$ that the family 
\[
\left\{ \mathrm{Im}(\mathbf{X}|_{[\tau_{k},\tau_{k+1}]}):\ 0\leqslant k\leqslant L\ \mathrm{with}\ m_{k}=m\right\} 
\]
are mutually disjoint. Therefore, by choosing $U_{1}\cup U_{2}\cup V_{1}\cup V_{2}$
small enough, we can define a single $C^{\alpha}$-one form $\Phi_{m}$
supported on $\overline{K_{m}},$ such that 
\[
\int_{\tau_{k}}^{\tau_{k+1}}\Phi_{m}(d\mathbf{Y}_{t})=1
\]
for every $0\leqslant k\leqslant L$ satisfying $m_{k}=m.$ The first
part of the lemma follows from the decomposition of the extended signature:
\[
\left[\Phi_{m_{0}},\cdots,\Phi_{m_{L}}\right](\mathbf{X})=\prod_{k=0}^{L}\int_{\tau_{k}}^{\tau_{k+1}}\Phi_{m_{k}}(d\mathbf{Y}_{t})
\]
given in \cite{BG2015}, Lemma 5.1 (1).

The second part follows from \cite{BG2015}, Lemma 4.2 and Lemma 5.1
(2), (3). 
\end{proof}

\begin{rem}
The second part of Proposition \ref{prop: existence of one forms}
does not depend on the non-self-intersecting assumption; it is true
for all weakly geometric $p$-rough paths.
\end{rem}
The construction of the one forms in Proposition \ref{prop: existence of one forms}
certainly depends on the trajectory of $\mathbf{X}$ in a crucial
way. However, once existence is guaranteed by the result of Proposition
\ref{prop: existence of one forms}, we actually do not need the specific
construction and it is sufficient to compute extended signatures with
respect to a pre-specified countable generating set for the space
of one forms. This is a consequence of separability and continuity.

Firstly, as $\alpha=\lfloor p\rfloor+1$ is a positive integer, it is well known that the space of $C^{\alpha}$-functions
supported on some given compact set $K$ equipped with the $C_{K}^{\alpha}$-topology
is separable. Therefore, for each domain $K_{i},$ we can specify
a countable dense subset $\{\Phi_{n}^{(i)}:\ n\geqslant1\}$ of the
space of $C^{\alpha}$-one forms supported on $\overline{K_{i}}$.
If the geometry of $K_{i}$ is simple, the construction of $\Phi_{n}^{(i)}$
can be made explicit for instance by using wavelets. In fact in our
situation the domains $K_{i}$ are just cubes or convex hulls of two
concentric cubes. 

Secondly, for given $l\geqslant0$, let $\mathcal{W}_{l}$ be the
set of words $\mathbf{n}=(n_{0}=0,n_{1},\cdots,n_{l})$ such that
$n_{k}\neq n_{k-1}$ for $1\leqslant k\leqslant l$. Given a word
$\mathbf{n}\in\mathcal{W}_{l},$ let $\{I_{m}(g;l,\mathbf{n}):\ m\geqslant1\}$
be an enumeration of all possible extended signatures of $\mathbf{Y}$
along the word $\mathbf{n}$ with respect to the previous specification
of generating one forms. Define
\begin{equation}
\chi(g;l,\mathbf{n})=\begin{cases}
1, & \mathrm{if}\ I_{m}(g;l,\mathbf{n})\neq0\ \mathrm{for\ some\ }m;\\
0, & \mathrm{otherwise.}
\end{cases}\label{eq: defining chi}
\end{equation}
Note that $\chi(g;l,\mathbf{n})$ is determined by the signature $g$
and the word $\mathbf{n}\in\mathcal{W}_{l}$. According to the continuity
of rough path integrals along one forms, a direct consequence of Proposition
\ref{prop: existence of one forms} is the following.
\begin{cor}
\label{cor: reconstructing the visiting sequence}The total number
$L$ of domains visited by $\mathbf{X}$ is given by 
\[
L=\sup\left\{ l\geqslant0:\ \chi(g;l,\mathbf{n})=1\ \mathrm{for\ some\ }\mathbf{n}\in\mathcal{W}_{l}\right\} ,
\]
Moreover, there is one and only one word $\mathbf{n}=(n_{0},\cdots,n_{L})\in\mathcal{W}_{L}$
such that $\chi(g;l,\mathbf{n})=1,$ and $\mathbf{n}$ is exactly
the word $\mathbf{m}=(m_{0},\cdots,m_{L})$ corresponding to the ordered
sequence of domains visited by $\mathbf{X}$.
\end{cor}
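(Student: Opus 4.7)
The plan is to derive the characterization directly from Proposition \ref{prop: existence of one forms} together with two standard ingredients: the density of the generating family $\{\Phi_n^{(i)}\}$ in the space of $C^\alpha$-one-forms supported on $\overline{K_i}$, and the continuity of the map $(\Phi^1,\ldots,\Phi^l)\mapsto[\Phi^1,\ldots,\Phi^l](\mathbf{Y})$ in the $C^\alpha$-topology of each argument, with $\mathbf{Y}$ fixed, which is the content of \cite{FV2010}, Theorem 10.47.

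First I would establish that $\chi(g;L,\mathbf{m})=1$. By Proposition \ref{prop: existence of one forms}(1) there is a tuple of $C^\alpha$-one-forms $(\Phi_{m_0},\ldots,\Phi_{m_L})$, with $\Phi_{m_k}$ supported on $\overline{K_{m_k}}$, such that $[\Phi_{m_0},\ldots,\Phi_{m_L}](\mathbf{X})\neq0$. By density, each $\Phi_{m_k}$ can be approximated in the $C^\alpha$-norm by some element $\Phi_{n_k}^{(m_k)}$ of the generating family for $\overline{K_{m_k}}$. By the continuity of the extended signature recalled above, taking the approximations sufficiently close ensures that $[\Phi_{n_0}^{(m_0)},\ldots,\Phi_{n_L}^{(m_L)}](\mathbf{Y})$ remains nonzero. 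This value is one of the enumerated quantities $I_m(g;L,\mathbf{m})$, so $\chi(g;L,\mathbf{m})=1$ by definition.

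Next I would show vanishing for all other candidates. Fix $l\geqslant 0$ and $\mathbf{n}\in\mathcal{W}_l$. If either some $n_k\notin S$, or if $l\geqslant L$ and $\mathbf{n}\neq\mathbf{m}$, then Proposition \ref{prop: existence of one forms}(2) asserts that for every choice of $C^\alpha$-one-forms $\Psi_{n_k}$ supported on $\overline{K_{n_k}}$ we have $[\Psi_{n_0},\ldots,\Psi_{n_l}](\mathbf{X})=0$. Specialising to the generating family gives $I_m(g;l,\mathbf{n})=0$ for every $m$, hence $\chi(g;l,\mathbf{n})=0$. Combined with the previous paragraph, this yields that the supremum in the statement equals $L$ (no $l>L$ contributes) and that $\mathbf{m}$ is the unique word in $\mathcal{W}_L$ for which $\chi=1$. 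The only slightly delicate ingredient in the whole argument is the density-and-continuity step used to pass from the abstract one-forms produced by Proposition \ref{prop: existence of one forms} to the pre-specified countable family; everything else is a direct reading of Proposition \ref{prop: existence of one forms} and the definition \eqref{eq: defining chi} of $\chi$.
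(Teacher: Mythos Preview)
Your proof is correct and follows essentially the same approach as the paper: the paper states the corollary as a direct consequence of Proposition~\ref{prop: existence of one forms} together with separability of the compactly supported $C^\alpha$-one-forms and continuity of the rough integral, and your argument is precisely a careful unpacking of that claim. The only minor remark is that your second step implicitly uses that any $\mathbf{n}\in\mathcal{W}_l$ with $l>L$ is automatically different from $\mathbf{m}$ (by length), so Proposition~\ref{prop: existence of one forms}(2) applies; this is clear but could be stated explicitly.
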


In other words, in the case when $\mathbf{X}$ is simple, the word
$\mathbf{m}$ can be reconstructed from the signature of $\mathbf{X}$. 

\begin{rem}
In contrast to the probabilistic setting, here as we have to treat every
path equally, a universal construction of a single one form for
all paths is not possible. Instead we need to compute extended signatures
along all possible ``directions'' to reveal geometric information
about the underlying path. This is the nature of the problem if we aim
at finding a universal reconstruction for the class of all tree-reduced
weakly geometric $p$-rough paths in one go.
\end{rem}

\subsection{The Key Ingredient: A Stable Quantity on Words}

Now we turn to the reconstruction of $\mathbf{X}$ by constructing
our geometric scheme in a more specific way.

Recall that $E_{\lfloor p\rfloor}$ is an Euclidean space of dimension
$D.$ For $0\leqslant j\leqslant D,$ let $V_{j}$ be the set of points
$z=(z^{I})_{0\leqslant|I|\leqslant\lfloor p\rfloor}\in E_{\lfloor p\rfloor}$
such that exactly $j$ of those $z^{I}$ are integers and the rest
are half-integers (i.e. of the form $z^{I}=n/2$ where $n$ is an
odd integer). 
\begin{defn}
For $0\leqslant j\leqslant D,$ the \textit{open} $j$\textit{-skeleton
}$K_{j}$ is defined to be the disjoint union 
\[
K_{j}=\bigcup_{z\in V_{j}}\{a\in E_{\lfloor p\rfloor}:\ \left|a^{I}-z^{I}\right|<\frac{1}{2}\ \mathrm{if\ }z^{I}\ \mathrm{is\ an\ integer\ and}\ a^{I}=z^{I}\ \mathrm{otherwise}\}
\]
of $j$-dimensional open faces with centers in $V_{j},$ and the \textit{closed
$j$-skeleton $C_{j}$ }is defined to be 
\[
C_{j}=\bigcup_{i=0}^{j}K_{i}.
\]

\end{defn}

Figure \ref{fig: new figure 2} illustrates the definition of the open and closed skeletons when $D=2$. Here $K_2$ consists of the set of open cubes centered at integer points, $K_1$ consists of their open edges and $K_0$ is the set of vertices.

\begin{figure} 
\begin{center} 
\includegraphics[scale=0.35]{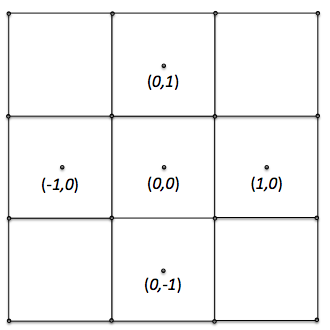}
\protect\caption{This figure illustrates the definition of the open and closed skeletons in when $D=2$. Here $K_2$ consists of the set of open cubes centered at integer points, $K_1$ consists of their open edges and $K_0$ is the set of vertices.}\label{fig: new figure 2}
\end{center}
\end{figure}

From now on, we fix $\varepsilon>0$ for the rest of this subsection,
and for simplicity in the upcoming notions the dependence on $\varepsilon$
will be omitted. Correspondingly, the skeletons $K_j$ and $C_j$ are scaled to the $\varepsilon$ order.

Let $\delta$ be a parameter with $\delta<\varepsilon$. For convenience
we should think of $\varepsilon,\delta$ as discrete parameters (for
instance we can simply take $\varepsilon=1/m,$ $\delta=1/n$ with
$n>m$). The same remark applies to other parameters to be introduced
later on.

For $z\in V_{D},$ let $H_{z}^{\delta;D}$ be the cube defined by
\[
H_{z}^{\delta;D}=\left\{ a\in E_{\lfloor p\rfloor}:\ \left|a^{I}-\varepsilon z^{I}\right|<\frac{\varepsilon-\delta}{2}\right\} ,
\]
The geometric scheme of the collection $\{\mathbf{1}+H_{z}^{\delta;D}:\ z\in V_{D}\}$
($\mathbf{1}=(1,0,\cdots,0)$ is the unit of $E_{\lfloor p\rfloor}$)
is denoted by $\mathcal{C}^{\delta;D}.$ In other words, $\mathcal{C}^{\delta;D}$
consists of cubes with edge length $\varepsilon-\delta$ and narrow
tunnels with width $\delta.$

We define $L^{\delta;D}$ and $\{\tau_{k}^{\delta;D},m_{k}^{\delta;D}:\ 0\leqslant k\leqslant L^{\delta;D}\}$
for the geometric scheme $\mathcal{C}^{\delta;D}$ in the same way
as in the last subsection. Here we label the domains in $\mathcal{C}^{\delta;D}$ by elements in $V_D$ so that $m_k^{\delta;D}\in V_D$ is the center of the $k$-th visited cube. According to the previous discussion, the word 
\[
\mathbf{m}^{\delta;D}=(m_{0}^{\delta;D},\cdots,m_{L^{\delta;D}}^{\delta;D})
\]
can be reconstructed from the signature $g$. Associated with the
word $\mathbf{m}^{\delta;D},$ we can construct a polygonal path by
connecting the centers $m_{k}^{\delta;D}$ ($0\leqslant k\leqslant L^{\delta;D}$)
in order. 

\begin{rem}
Although $\mathcal{C}^{\delta;D}$ contains countably many cubes,
the discussion in the last subsection certainly carries through in
without any difficulty since from compactness we know
that the underlying path can visit at most finitely many different
cubes.
\end{rem}

As we have pointed out in the discussion in Section
3, we cannot expect the convergence of the polygonal approximation to our underlying path in any sense when we let $\varepsilon,\delta\rightarrow0$. The main idea of overcoming this issue is to let the signature $g$
choose the ``right'' width $\delta(g)$, so that in the resulting
geometric scheme $\mathcal{C}^{\delta(g);D}$, we can conclude that
if the underlying path $\mathbf{X}$ has a long excursion in the complement of $\mathcal{C}^{\delta(g);D}$ (i.e. the tunnels) during some time period $[s,t]$, it
has to spend some time period $[s',t']\subset[s,t]$ having such a long excursion \textit{in the closed} $(D-1)$-\textit{skeleton}. This enables us
to start a recursive construction until we reach the $0$-skeleton by
adding more and more domains over the open skeleton in each dimension.
The conclusion is that in the final geometric scheme $\mathcal{C}$, the path is not able to have a long excursion in the complement of $\mathcal{C}$ and the convergence is then immediate. 

The way of choosing $\delta$ by the signature is through a stable
quantity on words.

Let $\mathcal{W}^{D}$ be the set of all finite words $\mathbf{z}=(z_{0}=0,z_{1},\cdots,z_{n})$
over the alphabet $V_{D}$ such that $z_{k}\neq z_{k-1}$ for all
$k.$
\begin{defn}
A \textit{stable quantity }over $V_{D}$ with respect to the geometric
schemes $\mathcal{C}^{\delta;D}$ ($0<\delta<\varepsilon$) is a map
$\mathfrak{s}:\ \mathcal{W}^{D}\rightarrow\mathbb{Z},$ such that
for any continuous path $x$ in $E_{\lfloor p\rfloor}$ starting at
$\mathbf{1},$ the limit 
\begin{equation}
\lim_{\delta\rightarrow0}\mathfrak{s}(\mathbf{m}^{\delta;D})\label{eq: the limit for the stablizing quantity}
\end{equation}
always exists, where $\mathbf{m}^{\delta;D}$ is the word corresponding
to the ordered sequence of cubes visited by $x$ in the geometric
scheme $\mathcal{C}^{\delta;D}$ defined as before.
\end{defn}

\begin{example}
Given a word $\mathbf{z}=(z_{0},\cdots,z_{n})\in\mathcal{W}^{D},$
let $S_{\mathbf{z}}=\{z_{0},\cdots,z_{n}\}\subset V_{D}$ be the associated
set of letters in $\mathbf{z}$. Define $\mathfrak{s}(\mathbf{z})=\sharp(S_{\mathbf{z}})$
(the total number of elements in $S_{\mathbf{z}}$). Then $\mathfrak{s}$
defines a stable quantity. Indeed, let $x$ be a continuous path $x$
starting at $\mathbf{1}.$ For $\delta_{1}<\delta_{2},$ it is easy
to see that $\mathbf{m}^{\delta_{2};D}$ is a subword of $\mathbf{m}^{\delta_{1};D}$,
and thus $\mathfrak{s}(\mathbf{m}^{\delta_{2};D})\leqslant\mathfrak{s}(\mathbf{m}^{\delta_{1};D}).$
Moreover, from the boundedness of $x$ we know that 
\[
\sup_{0<\delta<\varepsilon}\mathfrak{s}(\mathbf{m}^{\delta;D})<\infty.
\]
Therefore the limit (\ref{eq: the limit for the stablizing quantity})
exists. However, this stable quantity is not applicable for our purpose.
\end{example}

\begin{example}
For $\mathbf{z}=(z_{0},\cdots,z_{n})\in\mathcal{W}^{D},$ define $\mathfrak{s}(\mathbf{z})=n$
(the length of $\mathbf{z}$). Then $\mathfrak{s}$ is not a stable
quantity. This is simply because we can construct a path $x$, such
that the length of the associated word $\mathbf{m}^{\delta;D}$ explodes
as $\delta\rightarrow0$ (think of the topologist's sine curve in
higher dimensions and note that the distance between neighboring cubes
tends to zero).
\end{example}

Now we are going to define a stable quantity $\mathfrak{s}_{D},$
such that the width parameter 
\begin{equation}
\delta_{1}:=\frac{1}{2}\sup\{0<\delta<\varepsilon:\ \mathfrak{s}_{D}(\mathbf{m}^{\delta';D})=\mathrm{const.}\ \forall\delta'\leqslant\delta\}>0\label{eq: choice of delta_1}
\end{equation}
obtained from the stability of $\mathfrak{s}_{D}(\mathrm{m}^{\delta;D})$
as $\delta\rightarrow0$ will serve our purpose. In particular we
know that $\mathfrak{s}_{D}(\mathbf{m}^{\delta';D})=\mathfrak{s}_{D}(\mathbf{m}^{\delta_{1};D})$
for all $\delta'\leqslant\delta_{1}.$ Note that $\delta_{1}$ is
determined by the signature $g$ explicitly. 
\begin{defn}
Let $\mathbf{z}=(z_{0},\cdots,z_{n})\in\mathcal{W}^{D}.$ An \textit{admissible
chain} $c$ in $\mathbf{z}$ is a subword $(z_{i_{1}},\cdots,z_{i_{r}})$
for some $0\leqslant i_{1}<\cdots<i_{r}\leqslant n$ such that 
\[
\left|z_{i_{k}}-z_{i_{k-1}}\right|\geqslant2\sqrt{D}
\]
for all $2\leqslant k\leqslant r.$ 
\end{defn}

For $\mathbf{z}\in\mathcal{W}^{D},$ we define $\mathfrak{s}_{D}(\mathbf{z})$
to be the maximal length $r$ of all possible admissible chains in
$\mathbf{z}$ (if admissible chains do not exist, we simply define
$\mathfrak{s}_{D}(\mathbf{z})=1$). It is obvious that $\mathfrak{s}_{D}(\mathbf{z})$
is well-defined. Any admissible chain in $\mathbf{z}$ with length
$\mathfrak{s}_{D}(\mathbf{z})$ is called a \textit{maximal }admissible
chain. Note that maximal admissible chains may not be unique.
\begin{example}
Consider the case when $D=2.$ Let 
\[
\mathbf{z}=((0,0),(0,1),(1,1),(2,1),(2,2),(3,1)).
\]
 Then $\mathfrak{s}_{D}(\mathbf{z})=2,$ and $((0,0),(2,2)),$ $((0,0),(3,1)),$
$((0,1),(3,1))$ are all maximal admissible chains.
\end{example}

Now we have the following result.
\begin{lem}
\label{lem: proof of stability}$\mathfrak{s}_{D}$ is a stable quantity.\end{lem}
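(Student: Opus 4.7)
The strategy is to show that $\mathfrak{s}_D(\mathbf{m}^{\delta;D})$ is monotone non-decreasing as $\delta$ decreases and uniformly bounded above independently of $\delta$. Since $\mathfrak{s}_D$ takes values in $\mathbb{Z}_{\geqslant 1}$, these two facts force the sequence to stabilize for all sufficiently small $\delta$, which yields the existence of the limit in (\ref{eq: the limit for the stablizing quantity}).

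For monotonicity, the first step is to verify that whenever $0<\delta_{1}<\delta_{2}<\varepsilon$, the visit sequence $\mathbf{m}^{\delta_{2};D}$ is a subword of $\mathbf{m}^{\delta_{1};D}$. The point is that each $\delta_{2}$-cube $\mathbf{1}+H_{z}^{\delta_{2};D}$ is strictly contained in the corresponding $\delta_{1}$-cube $\mathbf{1}+H_{z}^{\delta_{1};D}$, so every entrance at parameter $\delta_{2}$ takes place during an existing stay inside the larger $\delta_{1}$-cube with the same center, and the ordering of first entrances (after leaving the previously visited cube) is preserved. Since admissibility of a chain $(z_{i_{1}},\cdots,z_{i_{r}})$ depends only on the $V_{D}$-distances $|z_{i_{k}}-z_{i_{k-1}}|$ between consecutive elements and not on the surrounding letters, every admissible chain in $\mathbf{m}^{\delta_{2};D}$ is also an admissible chain in $\mathbf{m}^{\delta_{1};D}$. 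Taking the maximum length over chains then gives $\mathfrak{s}_{D}(\mathbf{m}^{\delta_{2};D})\leqslant\mathfrak{s}_{D}(\mathbf{m}^{\delta_{1};D})$.

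For the uniform upper bound, I would invoke the uniform continuity of the continuous path $x$ on $[0,1]$. Let $(z_{i_{1}},\cdots,z_{i_{r}})$ be any admissible chain in $\mathbf{m}^{\delta;D}$ with corresponding entry times $\tau_{i_{k}}^{\delta;D}$. The cube centers $\mathbf{1}+\varepsilon z_{i_{k-1}}$ and $\mathbf{1}+\varepsilon z_{i_{k}}$ are Euclidean distance at least $2\sqrt{D}\varepsilon$ apart, while each cube has Euclidean diameter $\sqrt{D}(\varepsilon-\delta)<\sqrt{D}\varepsilon$; hence between times $\tau_{i_{k-1}}^{\delta;D}$ and $\tau_{i_{k}}^{\delta;D}$ the path $x$ must travel Euclidean distance at least $\sqrt{D}\varepsilon$. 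By uniform continuity, there exists $\mu=\mu(x,\varepsilon)>0$, \emph{independent of} $\delta$, such that $|t-s|<\mu$ implies $|x_{t}-x_{s}|<\sqrt{D}\varepsilon$. Consequently each such time gap is at least $\mu$, which forces $r\leqslant 1/\mu+1$ uniformly in $\delta$.

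Combining the two estimates yields the claim, since any bounded, integer-valued, monotone sequence is eventually constant. The main subtlety lies in the clean verification of the subword property, which relies on the nested structure of the cubes as the tunnel parameter shrinks; the uniform bound is then a direct consequence of the separation condition $|z_{i_{k}}-z_{i_{k-1}}|\geqslant 2\sqrt{D}$ built into the definition of admissibility. It is precisely this geometric separation that makes $\mathfrak{s}_{D}$ stable, while the plain length statistic of the previous example fails to be so.
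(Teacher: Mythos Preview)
Your proposal is correct and follows essentially the same approach as the paper's proof: both establish monotonicity of $\mathfrak{s}_D(\mathbf{m}^{\delta;D})$ as $\delta\downarrow 0$ via the subword property of the visit sequences, and both obtain a uniform upper bound by using (uniform) continuity of $x$ together with the triangle inequality to show that consecutive entry times along an admissible chain are separated by at least some $\eta=\mu(x,\varepsilon)>0$, forcing $r\leqslant 1/\eta+1$. The only differences are cosmetic: the paper phrases the continuity step contrapositively (``$|x_t-x_s|\geqslant\sqrt{D}\varepsilon\Rightarrow|t-s|\geqslant\eta$'') and bounds the distance from the entry point to the cube center by $\frac{\sqrt{D}}{2}\varepsilon$ directly, while you invoke the cube diameter.
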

\begin{proof}
Let $x$ be a continuous path starting at $\mathbf{1}.$ For $0<\delta<\varepsilon,$
let $c^{\delta}=(z_{i_{1}},\cdots,z_{i_{r}})$ be an admissible chain
of $\mathbf{m}^{\delta;D}$ such that $r=\mathfrak{s}_{D}(\mathbf{m}^{\delta;D}).$
It follows that $c^{\delta}$ is also an admissible chain of $\mathbf{m}^{\delta';D}$
for any $\delta'<\delta$ since $\mathbf{m}^{\delta;D}$ is a subword
of $\mathbf{m}^{\delta';D}$. Therefore, $\mathfrak{s}_{D}(\mathbf{m}^{\delta';D})\geqslant\mathfrak{s}_{D}(\mathbf{m}^{\delta;D}).$
On the other hand, since $x$ is continuous, there exists $\eta>0$
such that 
\[
|x_{t}-x_{s}|\geqslant\sqrt{D}\varepsilon\implies|t-s|\geqslant\eta.
\]
In the geometric scheme $\mathcal{C}^{\delta;D},$ let $\zeta_{1}<\cdots<\zeta_{r}$
be a sequence of entry times corresponding to the admissible chain
$c^{\delta}$ (they exist by definition). Then for each $k$ we have
\begin{eqnarray*}
\left|x_{\zeta_{k}}-x_{\zeta_{k-1}}\right| & \geqslant & \left|\varepsilon z_{i_{k}}-\varepsilon z_{i_{k-1}}\right|-\left|x_{\zeta_{k}}-(\mathbf{1}+\varepsilon z_{i_{k}})\right|-\left|x_{\zeta_{k-1}}-(\mathbf{1}+\varepsilon z_{i_{k-1}})\right|\\
 & \geqslant & 2\sqrt{D}\varepsilon-\frac{\sqrt{D}}{2}\varepsilon-\frac{\sqrt{D}}{2}\varepsilon\\
 & = & \sqrt{D}\varepsilon.
\end{eqnarray*}
It follows that $|\zeta_{k}-\zeta_{k-1}|\geqslant\eta$ for every
$k$, and thus $r\leqslant1/\eta+1.$ Therefore, the limit (\ref{eq: the limit for the stablizing quantity})
exists.
\end{proof}

Now we define $\delta_{1}$ by the formula (\ref{eq: choice of delta_1}). 

The next step is to develop similar construction for geometric schemes
over the open $(D-1)$-skeleton.

Suppose $0<\delta\leqslant\delta_{1}.$ For $z\in V_{D-1}$, define
\begin{eqnarray}
H_{z}^{\delta;D-1} & = & \left\{ a\in E_{\lfloor p\rfloor}:\ \left|a^{I}-\varepsilon z^{I}\right|<\frac{\varepsilon-\delta}{2}\ \mathrm{if\ }z^{I}\ \mathrm{is\ an\ integer}\right.\nonumber \\
 &  & \left.\mathrm{and}\ \left|a^{I}-\varepsilon z^{I}\right|<\frac{\delta}{4}\ \mathrm{otherwise}\right\} .\label{eq: constructing the cube}
\end{eqnarray}
Let $C_{z}^{\delta;D-1}$ be the convex hull of the two cubes $H_{z}^{\delta;D-1}$
and $H_{z}^{\delta_{1};D-1}.$ We denote the geometric scheme of $\{\mathbf{1}+C_{z}^{\delta;D-1}:\ z\in V_{D-1}\}$
by $\mathcal{C}^{\delta;D-1}.$ Figure \ref{figure 1} (excluding the $4$ cubes) illustrates the construction of $\mathcal{C}^{\delta;D-1}$ when $D=2$.

\begin{figure} 
\begin{center} 
\includegraphics[scale=0.36]{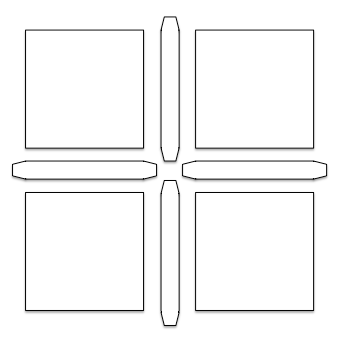}
\protect\caption{This figure illustrates the construction of $\mathcal{C}^{\delta;D-1}$ and $\mathcal{C}^\varepsilon(g)$ (to be defined later) when $D=2$.}\label{figure 1}
\end{center}
\end{figure}

\begin{lem}
\label{lem: basic feature of geometric schemes}(1) For each $\delta\leqslant\delta_{1},$
$\{\overline{C_{z}^{\delta;D-1}}:\ z\in V_{D-1}\}$ are mutually disjoint.

(2) For each $z\in V_{D-1},$ if $\delta'<\delta\leqslant\delta_{1},$
then $\overline{C_{z}^{\delta;D-1}}\subset\overline{C_{z}^{\delta';D-1}}.$\end{lem}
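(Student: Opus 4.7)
The plan is to verify the two claims separately, with most of the effort going into part (1) in the subcase where $z$ and $z'$ have their unique half-integer coordinates at different positions, since this is the situation where the axis-aligned bounding boxes of the octagonal shapes $C_z^{\delta;D-1}$ genuinely overlap, and we must use the convex-hull structure rather than mere coordinate projections.

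For part (1), since $C_z^{\delta;D-1}$ is the convex hull of two boxes $H_z^{\delta;D-1}, H_z^{\delta_1;D-1}$ both centred at $\mathbf{1}+\varepsilon z$, any $a\in C_z^{\delta;D-1}$ admits a representation $a=\lambda b+(1-\lambda)c$ with $b\in H_z^{\delta;D-1}$, $c\in H_z^{\delta_1;D-1}$, $\lambda\in[0,1]$, and similarly $a\in C_{z'}^{\delta;D-1}$ gives a parameter $\mu$. This yields coordinate-wise bounds of the form $|a^I-\varepsilon z^I|\leq \lambda(\varepsilon-\delta)/2+(1-\lambda)(\varepsilon-\delta_1)/2$ when $z^I$ is an integer and $|a^I-\varepsilon z^I|\leq \lambda\delta/4+(1-\lambda)\delta_1/4$ when $z^I$ is a half-integer. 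I would then split into cases based on where $z$ and $z'$ host their half-integer coordinates. If these indices coincide, any coordinate at which $z\neq z'$ produces an immediate contradiction via the triangle inequality: either an integer-type centre distance of at least $\varepsilon$ is bounded by at most $\varepsilon-\delta$, or a half-integer-type centre distance of at least $\varepsilon$ is bounded by at most $\delta_1/2$, both impossible since $\delta_1<\varepsilon$.

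The crucial case is when $z$ has its half-integer coordinate at some $I_1$ and $z'$ at $I_2\neq I_1$. At each of $I_1$ and $I_2$ the centre distance is at least $\varepsilon/2$, but the triangle-inequality bounds mix the two parameters: at $I_1$ they read $\varepsilon/2\leq \lambda\delta/4+(1-\lambda)\delta_1/4+\mu(\varepsilon-\delta)/2+(1-\mu)(\varepsilon-\delta_1)/2$, and analogously at $I_2$ with the roles of $\lambda$ and $\mu$ exchanged. Adding the two inequalities, setting $s=\lambda+\mu\in[0,2]$, and simplifying yields $s\delta+(2-s)\delta_1\leq 0$, which contradicts $\delta,\delta_1>0$ since the affine function $s\mapsto s\delta+(2-s)\delta_1$ is strictly positive on $[0,2]$. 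This combined two-coordinate argument is the main technical obstacle of the proof, as one must genuinely exploit how the two boxes are glued into a convex hull rather than rely on bounding-box projections alone.

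For part (2), the plan is to display $H_z^{\delta;D-1}$ as a Minkowski combination of $H_z^{\delta';D-1}$ and $H_z^{\delta_1;D-1}$. With $\alpha=(\delta_1-\delta)/(\delta_1-\delta')\in(0,1)$, a direct computation of coordinate half-widths shows that $\alpha H_z^{\delta';D-1}+(1-\alpha)H_z^{\delta_1;D-1}=H_z^{\delta;D-1}$: the integer-direction half-width is $\alpha(\varepsilon-\delta')/2+(1-\alpha)(\varepsilon-\delta_1)/2=(\varepsilon-\delta)/2$, and the half-integer-direction half-width is $\alpha\delta'/4+(1-\alpha)\delta_1/4=\delta/4$. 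Hence $H_z^{\delta;D-1}\subseteq \mathrm{conv}(H_z^{\delta';D-1}\cup H_z^{\delta_1;D-1})=C_z^{\delta';D-1}$, and since $H_z^{\delta_1;D-1}\subseteq C_z^{\delta';D-1}$ tautologically, a second convex-hull step gives $C_z^{\delta;D-1}\subseteq C_z^{\delta';D-1}$; taking closures yields the desired inclusion.
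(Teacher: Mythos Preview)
Your argument is correct, and for part (2) it is essentially identical to the paper's: the paper checks that each vertex of $\overline{H_z^{\delta;D-1}}$ is the convex combination $\frac{\delta-\delta'}{\delta_1-\delta'}a^{(0)}+\frac{\delta_1-\delta}{\delta_1-\delta'}a^{(1)}$ of the corresponding vertices of $\overline{H_z^{\delta_1;D-1}}$ and $\overline{H_z^{\delta';D-1}}$, which is exactly your Minkowski identity $\alpha H_z^{\delta';D-1}+(1-\alpha)H_z^{\delta_1;D-1}=H_z^{\delta;D-1}$ read off on vertices.

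For part (1) both proofs make the same case split according to whether the half-integer indices of $z$ and $z'$ coincide, but in the mixed case your two-coordinate summation is genuinely sharper than the paper's single-coordinate argument. The paper picks one index $I\in\mathcal{I}\cap(\mathcal{I}')^{c}$ and asserts $|a^{I}-\varepsilon(z')^{I}|\leqslant\delta/4$; however for a generic point of the convex hull $\overline{C_{z'}^{\delta;D-1}}$ the projection onto the half-integer coordinate is only bounded by $\delta_{1}/4$, not $\delta/4$, and with that bound one merely obtains $|z^{I}-(z')^{I}|\leqslant\tfrac{1}{2}-\tfrac{\delta}{2\varepsilon}+\tfrac{\delta_{1}}{4\varepsilon}$, which does not contradict $|z^{I}-(z')^{I}|\geqslant\tfrac{1}{2}$ when $\delta<\delta_{1}/2$. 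Your device of summing the triangle inequalities at \emph{both} special indices $I_{1},I_{2}$ and reducing to $s\delta+(2-s)\delta_{1}\leqslant0$ with $s=\lambda+\mu\in[0,2]$ is precisely what is needed to exploit the convex-hull structure and close the argument. So your proof follows the paper's route but repairs a slip in the mixed-index subcase.
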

\begin{proof}
(1) Assume that $a\in\overline{C_{z}^{\delta;D-1}}\cap\overline{C_{z'}^{\delta;D-1}}$
for some $z\neq z'\in V_{D-1}.$ Let $\mathcal{I},\mathcal{I}'$ be
the sets of components of $z,z'$ which are integers respectively.
If $\mathcal{I}=\mathcal{I}'$, then for any $I$ we have 
\[
\left|a^{I}-\varepsilon z^{I}\right|\leqslant\frac{\varepsilon-\delta}{2},\ \left|a^{I}-\varepsilon(z')^{I}\right|\leqslant\frac{\varepsilon-\delta}{2}
\]
if $I\in\mathcal{I}$, which implies that $z^{I}=(z')^{I},$ and 
\[
\left|a^{I}-\varepsilon z^{I}\right|\leqslant\frac{\delta}{4},\ \left|a^{I}-\varepsilon(z')^{I}\right|\leqslant\frac{\delta}{4}
\]
if $I\notin\mathcal{I}$, which also implies that $z^{I}=(z')^{I}.$
This contradicts $z\neq z'.$ If $\mathcal{I}\neq\mathcal{I}',$ there
exists some $I$ such that $I\in\mathcal{I}\cap(\mathcal{I}')^{c}.$
It follows that $|z^{I}-(z')^{I}|\geqslant1/2.$ On the other hand,
we have 
\[
\left|a^{I}-\varepsilon z^{I}\right|\leqslant\frac{\varepsilon-\delta}{2},\ \left|a^{I}-\varepsilon(z')^{I}\right|\leqslant\frac{\delta}{4}.
\]
Therefore, 
\[
|z^{I}-(z')^{I}|\leqslant\frac{1}{2}-\frac{\delta}{4\varepsilon},
\]
which is contradiction. 

(2) First note that $\overline{H_{z}^{\delta;D-1}}$ is the convex
hull of its vertices. Therefore it suffices to show that each vertex
of $\overline{H_{z}^{\delta;D-1}}$ is an element of $\overline{C_{z}^{\delta';D-1}}.$
Let $a$ be a vertex of $\overline{H_{z}^{\delta;D-1}}.$ Without
loss of generality, we may assume that 
\begin{equation}
a^{I}=\begin{cases}
\varepsilon z^{I}+\frac{\varepsilon-\delta}{2}, & \mathrm{if\ }I\in\mathcal{I};\\
\varepsilon z^{I}+\frac{\delta}{4}, & \mathrm{otherwise,}
\end{cases}\label{eq: vertex coordinates}
\end{equation}
where $\mathcal{I}$ is the set of components of $z$ which are integers.
Let $a^{(0)}$ and $a^{(1)}$ be the corresponding vertices of $\overline{H_{z}^{\delta_{1};D-1}}$
and $\overline{H_{z}^{\delta';D-1}}$ respectively (i.e. replacing
$\delta$ by $\delta_{1}$ and $\delta'$ in (\ref{eq: vertex coordinates})
respectively). From direct calculation we know that 
\[
a=\frac{\delta-\delta'}{\delta_{1}-\delta'}a^{(0)}+\frac{\delta_{1}-\delta}{\delta_{1}-\delta'}a^{(1)}.
\]
Therefore $a\in\overline{C_{z}^{\delta';D-1}}.$
\end{proof}

Lemma \ref{lem: basic feature of geometric schemes} enables us to
develop the same construction over the geometric scheme $\mathcal{C}^{\delta;D-1}$
as we have done over $\mathcal{C}^{\delta;D}$. More precisely, we
define $L^{\delta;D-1},$ $\tau_{k}^{\delta;D-1},$ $\mathbf{m}^{\delta;D-1},$
admissible chains, and the quantity $\mathfrak{s}_{D-1}$ for words
in $\mathcal{W}^{D-1}$ over the alphabet $V_{D-1}$ in the same way
as before (here we allow the empty word as the path may not necessarily
visit $\mathcal{C}^{\delta;D-1}$, and we set $\mathfrak{s}_{D-1}(\emptyset)=0$
in this case). Moreover, the same argument as in the proof of Lemma
\ref{lem: proof of stability} shows that $\mathfrak{s}_{D-1}$ is
a stable quantity over $V_{D-1}$ with respect to the geometric schemes
$\mathcal{C}^{\delta;D-1}$ ($0<\delta\leqslant\delta_{1}$). Finally
we define 
\[
\delta_{2}=\frac{1}{2}\sup\left\{ 0<\delta<\delta_{1}:\ \mathfrak{s}_{D-1}(\mathbf{m}^{\delta';D-1})=\mathrm{const.}\ \forall\delta'\leqslant\delta\right\} >0
\]
as in (\ref{eq: choice of delta_1}). 
\begin{rem}
We construct the convex hull of $H_{z}^{\delta;D-1}$ and $H_{z}^{\delta_{1};D-1}$
instead of simply using the cube $H_{z}^{\delta;D-1}$ because $\mathfrak{s}_{D-1}$
will no longer be a stable quantity if we use the latter. Moreover,
it will be  difficult to show stability for other attempts to construct
a stable quantity without property (2) of Lemma \ref{lem: basic feature of geometric schemes}. 
\end{rem}

We carry on the construction recursively. Assume that we have constructed
$\mathcal{C}^{\delta_{j};D-j+1}$ for $j=1,\cdots,i.$ For $\delta\leqslant\delta_{i}$
and $z\in V_{i},$ we construct $H_{z}^{\delta;D-i}$ by (\ref{eq: constructing the cube})
and define $C_{z}^{\delta;D-i}$ to be the convex hull of $H_{z}^{\delta;D-i}$
and $H_{z}^{\delta_{i};D-i}.$ The geometric scheme of $\{\mathbf{1}+C_{z}^{\delta;D-i}:\ z\in V_{D-i}\}$
is denoted by $\mathcal{C}^{\delta;D-i}.$ Lemma \ref{lem: basic feature of geometric schemes}
holds in exactly the same way as before. Moreover, we construct the
stable quantity $\mathfrak{s}_{D-i}$ over $V_{D-i}$ and define $\delta_{i+1}>0$
accordingly from the stability property. 

Therefore, we obtain $D$ geometric schemes $\mathcal{C}^{\delta_{i};D-i+1}$
($i=1,\cdots,D$) from the signature $g,$ where $\delta_{1}>\cdots>\delta_{D}>0.$
Let 
\[
\mathcal{C}^{\varepsilon}(g)=\bigcup_{i=1}^{D}\mathcal{C}^{\delta_{i};D-i+1}
\]
be the totality of domains in each of them. $\mathcal{C}^{\varepsilon}(g)$
will be our single geometric scheme on the $\varepsilon$-scale in
which we are going to construct the polygonal approximation. The notation
emphasizes that it is determined by the signature $g$ as well as
its dependence on $\varepsilon.$ The previous Figure \ref{figure 1} also illustrates
the construction of $\mathcal{C}^{\varepsilon}(g)$ when $D=2.$

\begin{lem}
\label{lem: disjointness of final scheme}Let $A,B\in\mathcal{C}^{\varepsilon}(g),$
then $\overline{A}\cap\overline{B}=\emptyset.$\end{lem}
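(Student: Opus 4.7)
The plan is to split into two cases according to whether $A$ and $B$ belong to the same scheme $\mathcal{C}^{\delta_i;D-i+1}$ or to two different ones. The same-scheme case is already part of the natural generalization of Lemma~\ref{lem: basic feature of geometric schemes}(1), which, as the author indicates, holds at every dimension by the same argument. So the substance lies in the case $A\in\mathcal{C}^{\delta_i;D-i+1}$ and $B\in\mathcal{C}^{\delta_j;D-j+1}$ with $i<j$ (after swapping if necessary).

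Writing $A=\mathbf{1}+C_z^{\delta_i;D-i+1}$ and $B=\mathbf{1}+C_{z'}^{\delta_j;D-j+1}$ (with the convention $C_z^{\delta_1;D}=H_z^{\delta_1;D}$), let $\mathcal{I},\mathcal{I}'$ denote the sets of indices at which $z,z'$ have integer entries. Then $|\mathcal{I}|=D-i+1>D-j+1=|\mathcal{I}'|$, so by counting $|\mathcal{I}\setminus\mathcal{I}'|\geqslant j-i\geqslant 1$. Fix any $I_0\in\mathcal{I}\setminus\mathcal{I}'$; then $z^{I_0}$ is an integer and $(z')^{I_0}$ is a half-integer, and hence $|z^{I_0}-(z')^{I_0}|\geqslant 1/2$.

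Suppose for contradiction that $a+\mathbf{1}\in\overline{A}\cap\overline{B}$. Since $I_0\in\mathcal{I}$, the definition of $H_z^{\delta;D-i+1}$ together with the convex-hull structure of $C_z^{\delta_i;D-i+1}$ (when $i\geqslant 2$) yields
\[
|a^{I_0}-\varepsilon z^{I_0}|\leqslant\tfrac{\varepsilon-\delta_i}{2},
\]
and since $I_0\notin\mathcal{I}'$ the analogous bound for $B$ reads $|a^{I_0}-\varepsilon(z')^{I_0}|\leqslant\delta_{j-1}/4$. Combining via the triangle inequality with $|z^{I_0}-(z')^{I_0}|\geqslant 1/2$,
\[
\tfrac{\varepsilon}{2}\leqslant\tfrac{\varepsilon-\delta_i}{2}+\tfrac{\delta_{j-1}}{4},
\]
which rearranges to $\delta_{j-1}\geqslant 2\delta_i$. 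But $j-1\geqslant i$ forces $\delta_{j-1}\leqslant\delta_i$, giving $2\delta_i\leqslant\delta_i$, contradicting $\delta_i>0$.

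The key subtlety is choosing $I_0$ from $\mathcal{I}\setminus\mathcal{I}'$ rather than $\mathcal{I}'\setminus\mathcal{I}$. Had one picked the latter, the analogous inequality would read $\delta_{i-1}\geqslant 2\delta_j$, which is actually satisfied: the factor of $\tfrac{1}{2}$ built into the definition of each $\delta_k$ forces $\delta_k\geqslant 2\delta_{k+1}$, and iterating yields $\delta_{i-1}\geqslant 2^{j-i+1}\delta_j$, so no contradiction would follow. It is the counting $|\mathcal{I}|>|\mathcal{I}'|$ that guarantees $\mathcal{I}\setminus\mathcal{I}'$ is always non-empty, making the ``right'' index always available.
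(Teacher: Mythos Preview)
Your proof is correct and follows essentially the same route as the paper: split into the same-scheme case (handled by Lemma~\ref{lem: basic feature of geometric schemes}) and the cross-scheme case, then pick an index $I_0$ where $z^{I_0}$ is an integer but $(z')^{I_0}$ is a half-integer and derive a contradiction via the triangle inequality and $\delta_{j-1}\leqslant\delta_i$. Your counting argument for the existence of $I_0$ and your closing remark on why the asymmetric choice $I_0\in\mathcal{I}\setminus\mathcal{I}'$ is essential are nice clarifications, but the core argument is the same as the paper's.
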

\begin{proof}
The case when $A,B$ come from the same geometric scheme $\mathcal{C}^{\delta_{i};D-i+1}$
is contained in Lemma \ref{lem: basic feature of geometric schemes}.
Now suppose that $A=C_{z}^{\delta_{i};D-i+1}$ and $B=C_{w}^{\delta_{j};D-j+1}$
for some $i<j$ with $z\in V_{D-i+1}$ and $w\in V_{D-j+1}$ respectively.
By definition there exists some $I$ such that $z^{I}$ is an integer
and $w^{I}$ is a half-integer. If $a\in\overline{A}\cap\overline{B},$
from the construction of our domains we know that 
\[
\left|a^{I}-\varepsilon z^{I}\right|\leqslant\frac{\varepsilon-\delta_{i}}{2},\ \left|a^{I}-\varepsilon w^{I}\right|\leqslant\frac{\delta_{j-1}}{4}.
\]
But since $\delta_{j-1}\leqslant\delta_{i}$, we obtain that 
\[
\left|z_{k}-w_{k}\right|\leqslant\frac{1}{2}-\frac{\delta_{i}}{4\varepsilon},
\]
which is a contradiction. Therefore, $\overline{A}\cap\overline{B}=\emptyset.$
\end{proof}

So far we have not seen how the stable quantities $\mathfrak{s}_{i}$
($i=1,\cdots,D$) play a role in our reconstruction problem because
the previous construction certainly applies to arbitrary stable quantities
with respect to the corresponding geometric schemes. This will be
clear in the next subsection.

\subsection{The Convergence}

Recall from Section 3 that $W_{0}$ is the space of continuous paths
in $E_{\lfloor p\rfloor}$ which do not stay constant over any positive
time period, and modulo reparametrization $(W_{0}^{\sim},d)$ is a
metric space. Moreover, the set of equivalence classes for tree-reduced
weakly geometric $p$-rough paths is canonically embedded in $W_{0}^{\sim}.$

Let $\mathbf{X}$ be a simple tree-reduced weakly geometric $p$-rough
path with signature $g.$ Our goal is to reconstruct $\mathbf{X}$
modulo reparametrization, i.e. the equivalence class $[\mathbf{X}]$.

As before, in the geometric scheme $\mathcal{C}^{\varepsilon}(g),$
we define the word 
\[
\mathbf{m}^{\varepsilon}=(m_{0}^{\varepsilon}=0,m_{1}^{\varepsilon},\cdots,m_{L^{\varepsilon}}^{\varepsilon})
\]
over the alphabet $V=V_{1}\cup\cdots\cup V_{D}$ corresponding to
the ordered sequence of domains in $\mathcal{C}^{\varepsilon}$ visited
by $\mathbf{X}$, which is again determined by the signature explicitly.
We also define the corresponding entry times $\{\tau_{k}^{\varepsilon}:\ 0\leqslant k\leqslant L^{\varepsilon}\},$
which is \textit{not} determined by the signature as it is invariant
under reparametrization. 

Now we construct the associated polygonal approximation of $\mathbf{X}$
by joining the points in $\varepsilon\mathbf{m}^{\varepsilon}$ in
order and parametrizing it according to the successive entry times
$\tau_{k}^{\varepsilon}$. More precisely, we define
\[
\mathbf{X}_{t}^{\varepsilon}=\frac{\tau_{k}^{\varepsilon}-t}{\tau_{k}^{\varepsilon}-\tau_{k-1}^{\varepsilon}}(\mathbf{1}+\varepsilon m_{k-1}^{\varepsilon})+\frac{t-\tau_{k-1}^{\varepsilon}}{\tau_{k}^{\varepsilon}-\tau_{k-1}^{\varepsilon}}(\mathbf{1}+\varepsilon m_{k}^{\varepsilon})
\]
for $t\in[\tau_{k-1}^{\varepsilon},\tau_{k}^{\varepsilon}]$ if $1\leqslant k\leqslant L^{\varepsilon}$
and 
\[
\mathbf{X}_{t}^{\varepsilon}=\mathbf{1}+\varepsilon m_{L^{\varepsilon}}^{\varepsilon}
\]
for $t\in\left[\tau_{L^{\varepsilon}}^{\varepsilon},1\right].$ 

Apparently the path $\mathbf{X}^{\varepsilon}$ is not determined
by the signature $g.$ However, its equivalence class $[\mathbf{X}^{\varepsilon}],$
being regarded as an element in $W_{0}^{\sim},$ is reconstructed by
$g$ since it is determined by the word $\mathbf{m}^{\varepsilon}$
only.

Our reconstruction for the non-self-intersecting case will be finished
by proving that $[\mathbf{X}^{\varepsilon}]$ converges to $[\mathbf{X}]$
in $(W_{0}^{\sim},d)$ as $\varepsilon\rightarrow0$. This is based
on the following crucial fact.
\begin{prop}
\label{prop: impossible to travel through long tunnels}Let $T^{\varepsilon}=\left(\bigcup_{C\in\mathcal{C}^{\varepsilon}}C\right)^{c}$
be the set of tunnels for the geometric scheme $\mathcal{C}^{\varepsilon}.$
Then there do not exist $s<t$ such that 
\begin{equation}
\left|\mathbf{X}_{t}-\mathbf{X}_{s}\right|\geqslant33D^{\frac{3}{2}}\varepsilon\label{eq: assuming traveling through long tunnel}
\end{equation}
 and $\mathrm{Im}(\mathbf{X}|_{[s,t]})\subset T^{\varepsilon}.$\end{prop}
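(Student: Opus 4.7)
The plan is to argue by contradiction and exploit the stability of the quantities $\mathfrak{s}_{D-i+1}$ at each skeleton level $i=1,\ldots,D$ through a cascade argument. Suppose such $s<t$ exist. The idea is to turn the long excursion of $\mathbf{X}$ in $T^{\varepsilon}$ into a sequence of long excursions at successively lower‑dimensional skeleton levels, each of which will contradict the stability of the appropriate $\mathfrak{s}_{D-i+1}$.

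First I would establish a geometric description of $T^{\varepsilon}$. Since $\mathcal{C}^{\varepsilon}(g) = \bigcup_{i=1}^{D} \mathcal{C}^{\delta_i;D-i+1}$ is the union of thickenings of the open $(D-i+1)$‑skeletons, with widths controlled by the parameters $\delta_1 > \delta_2 > \cdots > \delta_D$, any $a \in T^{\varepsilon}$ must avoid every cell $\mathbf{1}+C_z^{\delta_i;D-i+1}$. From the explicit form of the cubes $H_z^{\delta;D-i+1}$ in \eqref{eq: constructing the cube}, this forces the coordinates of $a-\mathbf{1}$ to split into ``close to integer multiples of $\varepsilon$'' and ``close to half‑integer multiples'', assigning each $a\in T^{\varepsilon}$ a natural skeleton label recording which $(D-j)$‑skeleton neighborhood it inhabits. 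With this labelling, the residual tunnel system near the closed $j$‑skeleton $C_j$ is, for each $j<D$, nested inside the one at level $j+1$ and progressively narrower in the ``half‑integer'' directions.

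The main step is the recursive descent. At level $i=1$, the assumed long displacement $\geq 33D^{3/2}\varepsilon$ would force a sub-interval $[s_1,t_1]\subset[s,t]$ on which $\mathbf{X}$ has displacement at least $2\sqrt{D}\varepsilon$ while lying in the complement of $\mathcal{C}^{\delta_1;D}$. Now refine $\delta\to 0$ in the $D$-level scheme: as $\delta$ decreases past $\delta_1$, the cubes $\mathbf{1}+H_z^{\delta;D}$ grow and must eventually absorb points of $\mathbf{X}|_{[s_1,t_1]}$; by the displacement bound, the labels of the cubes entered in $V_D$ are separated by at least $2\sqrt{D}$, producing an admissible chain in $\mathbf{m}^{\delta;D}$ strictly longer than the chain realized at $\delta_1$. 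This contradicts the stability of $\mathfrak{s}_D$ for $\delta\leq\delta_1$, which holds by the very definition \eqref{eq: choice of delta_1}. If no such sub-interval exists, the excursion must be confined to the finer tunnels surrounding $C_{D-1}$, and the same argument recurses at level $i=2$ using $\mathfrak{s}_{D-1}$, and so on. After $D$ such steps, any residual excursion is trapped in a neighborhood of a single vertex of $V_0$ of diameter $O(\varepsilon)$, again contradicting $|\mathbf{X}_t-\mathbf{X}_s|\geq 33D^{3/2}\varepsilon$.

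The main obstacle will be the quantitative bookkeeping of the recursion. At each level $i$, a sub-interval carrying displacement $\geq 2\sqrt{D}\varepsilon$ must be extracted from an excursion within a tunnel of width controlled by $\delta_i$, while preserving enough of the remaining displacement for levels $i+1,\ldots,D$. The constant $33D^{3/2}$ is calibrated to absorb these $D$ losses, accounting for the $\sqrt{D}$ overhead from the diameter of a $D$-dimensional cube of side $\varepsilon$ as well as the triangle-inequality slack needed to guarantee that the new entries produced by each refinement genuinely satisfy the admissible-chain condition $|z_k-z_{k-1}|\geq 2\sqrt{D}$ in $V_{D-i+1}$. Verifying this latter point is the most delicate part, and it relies on the monotone growth $\overline{C_z^{\delta;D-i+1}}\subset\overline{C_z^{\delta';D-i+1}}$ from Lemma \ref{lem: basic feature of geometric schemes}(2) together with the mutual disjointness of the full scheme from Lemma \ref{lem: disjointness of final scheme}, which together ensure that refining $\delta$ at one level does not interfere with the stable admissible-chain counts already fixed at previous levels.
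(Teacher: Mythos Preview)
Your overall architecture --- contradiction plus a recursive descent through skeleton levels driven by the stability of each $\mathfrak{s}_{D-i+1}$ --- matches the paper's. But there is a genuine gap at the heart of your cascade step, and the tools you invoke will not close it.

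The problem is your claim that once a refined cube $\mathbf{1}+H_z^{\delta;D}$ (for some $\delta<\delta_1$) absorbs a point of $\mathbf{X}|_{[s_1,t_1]}$, ``the labels of the cubes entered in $V_D$ are separated by at least $2\sqrt{D}$, producing an admissible chain strictly longer than the chain realized at $\delta_1$.'' This does not follow from the displacement bound. The new letter $z$ is inserted into $\mathbf{m}^{\delta;D}$ between the positions of $z_1$ (the last $D$-cube visited before $s$) and $z_1'$ (the first after $t$). To lengthen a maximal admissible chain $c$ of $\mathbf{m}^{\delta_1;D}$ by inserting $z$, you must have $|z-m_1|\geq 2\sqrt{D}$ and $|z-m_1'|\geq 2\sqrt{D}$, where $m_1,m_1'$ are the elements of $c$ flanking the insertion point. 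Nothing in your argument locates $z$ relative to $z_1,z_1'$, let alone $m_1,m_1'$: the displacement $|\mathbf{X}_t-\mathbf{X}_s|$ is between two tunnel points and says nothing about distances to previously visited cube centres. Lemma~\ref{lem: basic feature of geometric schemes}(2) and Lemma~\ref{lem: disjointness of final scheme} are about the geometry of the domains, not about this separation, and do not help.

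The paper supplies exactly the missing idea, and the constant $33D^{3/2}$ arises from it directly rather than from losses accumulated over $D$ recursive extractions. One records the $2D$ centres $z_i,z_i'$ ($1\leq i\leq D$) of the last/first domains visited at each level, and notes that they cannot occupy all of $3D$ disjoint annuli $A_{11(k-1)\sqrt{D}\varepsilon,\,11k\sqrt{D}\varepsilon}(\mathbf{X}_s)$; by pigeonhole one annulus is free of them. A \emph{single} sub-interval $[s_2,t_2]\subset[s,t]$ with displacement $\geq\sqrt{D}\varepsilon$ is then chosen inside a thin shell of that annulus, buffered by $5\sqrt{D}\varepsilon$ from either rim. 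This guarantees $|z-z_i|,|z-z_i'|\geq 4\sqrt{D}$ for any new label $z$ at every level simultaneously, which forces $|z-m_1|,|z-m_1'|\geq 2\sqrt{D}$ by the triangle inequality and yields the strict increase $\mathfrak{s}_D(\mathbf{m}^{\delta;D})>\mathfrak{s}_D(\mathbf{m}^{\delta_1;D})$. The descent then runs on the \emph{same} interval $[s_2,t_2]$ at every level, not on a shrinking nest of sub-intervals.
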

\begin{proof}
Assume on the contrary that such $s,t$ exist. Since $\mathrm{Im}(\mathbf{X}|_{[s,t]})\subset T^{\varepsilon},$
we know that $\mathbf{X}$ does not visit any domain in $\mathcal{C}^{\varepsilon}(g)$
during $[s,t].$ For $1\leqslant i\leqslant D,$ let $C_{i}$ be the
last domain in $\mathcal{C}^{\delta_{i};D-i+1}$ visited by $\mathbf{X}$
before $s$, and let $C_{i}'$ be the first domain in $\mathcal{C}^{\delta_{i};D-i+1}$
visited by $\mathbf{X}$ after $t.$ It might be possible that only
one of them exists or even both do not exist. Let $z_{i},z_{i}'$
be their centers respectively. Since there are at most $2D$ of them,
there exists some $1\leqslant k\leqslant3D,$ such that 
\[
\mathbf{1}+\varepsilon z_{i},\mathbf{1}+\varepsilon z_{i}'\notin A_{11(k-1)\sqrt{D}\varepsilon,11k\sqrt{D}\varepsilon}(\mathbf{X}_{s})
\]
for all $1\leqslant i\leqslant D$, where 
\[
A_{r,R}(a):=\{b\in E_{\lfloor p\rfloor}:\ r<|b-a|<R\}
\]
denotes the open $(r,R)$-annulus around $a.$

By continuity and (\ref{eq: assuming traveling through long tunnel}),
there exist $s_{1}<t_{1}\in[s,t]$ such that 
\[
|\mathbf{X}_{s_{1}}-\mathbf{X}_{s}|=11(k-1)\sqrt{D}\varepsilon
\]
and 
\[
|\mathbf{X}_{t_{1}}-\mathbf{X}_{s}|=11k\sqrt{D}\varepsilon.
\]
Let 
\[
t_{2}=\inf\{u\in[s_{1},t_{1}]:\ |\mathbf{X}_{u}-\mathbf{X}_{s}|\geqslant(11k-5)\sqrt{D}\varepsilon\}
\]
and
\[
s_{2}=\sup\{u\in[s_{1},t_{2}]:\ |\mathbf{X}_{u}-\mathbf{X}_{s}|\leqslant(11k-6)\sqrt{D}\varepsilon\}.
\]
It follows that 
\begin{equation}
|\mathbf{X}_{t_{2}}-\mathbf{X}_{s_{2}}|\geqslant\sqrt{D}\varepsilon\label{eq: the contradiction}
\end{equation}
and $\mathrm{Im}(\mathbf{X}|_{[s_{2},t_{2}]})\subset\overline{A}_{(11k-6)\sqrt{D}\varepsilon,(11k-5)\sqrt{D}\varepsilon}$
(the closed annulus).

Now assume that there exists some $u\in[s_{2},t_{2}]$ with $\mathbf{X}_{u}\in \mathbf{1}+\varepsilon K_{D}$
(recall that $K_{D}$ is the open $D$-skeleton). It follows that
there exists some $\delta<\delta_{1}$ such that $\mathbf{X}_{u}\in\mathbf{1}+H_{z}^{\delta;D}$
for some $z\in V_{D}.$ From the construction of $s_{2},t_{2},$ we
have 
\begin{equation}
|z-z_{1}|,|z-z_{1}'|\geqslant4\sqrt{D}.\label{eq: distance between z and z_1,z_1'}
\end{equation}
Since $\mathrm{Im}(\mathbf{X}|_{[s,t]})\subset T^{\varepsilon},$
we conclude that $\mathbf{m}^{\delta_{1};D}$ must be a proper subword
of $\mathbf{m}^{\delta;D}.$ More precisely, if we write $\mathbf{m}^{\delta_{1};D}$
in the form 
\[
\mathbf{m}^{\delta_{1};D}=(\mathbf{z}_{1},z_{1},z_{1}',\mathbf{z}_{1}'),
\]
where $\mathbf{z}_{1},\mathbf{z}_{1}'$ are the sections of $\mathbf{m}^{\delta_{1};D}$
before $z_{1}$ and after $z_{1}'$ respectively, then the word 
\[
\mathbf{w}=(\mathbf{z}_{1},z_{1},z,z_{1}',\mathbf{z}_{1}')\in\mathcal{W}^{D}
\]
is a subword of $\mathbf{m}^{\delta;D}.$ Therefore, by definition
we have $\mathfrak{s}_{D}(\mathbf{w})\leqslant\mathfrak{s}_{D}(\mathbf{m}^{\delta;D}).$

On the other hand, let $c$ be a maximal admissible chain in $\mathbf{m}^{\delta_{1};D}$,
and let $m_{1},m_{1}'$ be the last and first letters in $c\cap(\mathbf{z}_{1},z_{1})$
and $c\cap(z_{1}',\mathbf{z}_{1}')$ respectively. If $|z_{1}-m_{1}|,|z_{1}'-m_{1}'|\leqslant2\sqrt{D}$,
from (\ref{eq: distance between z and z_1,z_1'}) we know that 
\[
|z-m_{1}|,|z-m_{1}'|\geqslant2\sqrt{D},
\]
which implies that $(c\cap(\mathbf{z}_{1},z_{1}),z,c\cap(z_{1}',\mathbf{z}_{1}'))$
is an admissible chain in $\mathbf{w}$, and thus 
\[
\mathfrak{s}_{D}(\mathbf{w})\geqslant\mathfrak{s}_{D}(\mathbf{m}^{\delta_{1};D})+1.
\]
Similarly, if $|z_{1}-m_{1}|\leqslant2\sqrt{D},$ $|z_{1}'-m_{1}'|\geqslant2\sqrt{D}$
or if $|z_{1}-m_{1}|\geqslant2\sqrt{D}$, $|z_{1}'-m_{1}'|\leqslant2\sqrt{D},$
we have 
\[
\mathfrak{s}_{D}(\mathbf{w})\geqslant\mathfrak{s}_{D}(\mathbf{m}^{\delta_{1};D})+2.
\]
And if $|z_{1}-m_{1}|,|z_{1}'-m_{1}'|\geqslant2\sqrt{D},$ we have
\[
\mathfrak{s}_{D}(\mathbf{w})\geqslant\mathfrak{s}_{D}(\mathbf{m}^{\delta_{1};D})+3.
\]
In other words, we conclude that $\mathfrak{s}_{D}(\mathbf{m}^{\delta_{1};D})<\mathfrak{s}_{D}(\mathbf{w})\leqslant\mathfrak{s}_{D}(\mathbf{m}^{\delta;D}).$
But this contradicts the construction of $\delta_{1}.$ Therefore,
$\mathrm{Im}(\mathbf{X}|_{[s_{2},t_{2}]})\subset \mathbf{1}+\varepsilon C_{D-1}$ (recall
that $C_{D-1}$ is the closed $(D-1)$-skeleton). 

Based on the construction of $\delta_{2},$ the same argument shows
that $\mathrm{Im}(\mathbf{X}|_{[s_{2},t_{2}]})\subset \mathbf{1}+\varepsilon C_{D-2}.$ Recursively,
we conclude that $\mathrm{Im}(\mathbf{X}|_{[s_{2},t_{2}]})\subset \mathbf{1}+\varepsilon C_{0}.$
Note that in each step $i$ of the recursive argument, we should also
consider the cases when one of $z_{i},z_{i}'$ does not exist and
both of them do not exist, but these two cases are apparently simpler
than the general discussion before. 

Finally, since $C_{0}$ is a discrete space, from continuity we have
$\mathbf{X}|_{[s_{2},t_{2}]}=\mathrm{const},$ which contradicts (\ref{eq: the contradiction}). 

Now the proof is complete. 
\end{proof}

Finally, we are in a position to prove the following convergence result.
\begin{thm}
\label{thm: the convergence}For every $\varepsilon>0,$ we have
\begin{equation}
\sup_{t\in[0,1]}\left|\mathbf{X}_{t}^{\varepsilon}-\mathbf{X}_{t}\right|\leqslant68D^{\frac{3}{2}}\varepsilon.\label{eq: the convergence}
\end{equation}
In particular, 
\[
\lim_{\varepsilon\rightarrow0}[\mathbf{X}^{\varepsilon}]=[\mathbf{X}]\ \mathrm{in}\ (W_{0}^{\sim},d).
\]
\end{thm}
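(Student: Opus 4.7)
The plan is to combine the ``no long tunnel excursion'' principle of Proposition~\ref{prop: impossible to travel through long tunnels} with a straightforward diameter estimate on the domains of $\mathcal{C}^{\varepsilon}(g)$, and then observe that the convergence in $(W_{0}^{\sim},d)$ is immediate because $\mathbf{X}^{\varepsilon}$ is built from $\mathbf{X}$ using its own entry times $\tau_{k}^{\varepsilon}$. The diameter bound is a routine inspection of the definitions: every domain $A\in\mathcal{C}^{\varepsilon}(g)$ lies in the closed ball of radius $\tfrac{1}{2}\sqrt{D}\,\varepsilon$ about its center, since for a $D$-skeleton cube $\mathbf{1}+H_{z}^{\delta_{1};D}$ each coordinate half-width is $(\varepsilon-\delta_{1})/2\leq\varepsilon/2$, and for a convex hull $\mathbf{1}+C_{z}^{\delta_{i};D-i+1}$ on a lower skeleton the integer-coordinate half-widths are bounded by $\varepsilon/2$ while the half-integer ones are bounded by $\delta_{i-1}/4\leq\varepsilon/4$.

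Next I would fix $1\leq k\leq L^{\varepsilon}$ and $t\in[\tau_{k-1}^{\varepsilon},\tau_{k}^{\varepsilon}]$, with the head $[0,\tau_{1}^{\varepsilon}]$ and tail $[\tau_{L^{\varepsilon}}^{\varepsilon},1]$ treated identically. By the very definition of the entry times, during this interval $\mathbf{X}$ does not enter any domain of $\mathcal{C}^{\varepsilon}(g)$ other than (possibly) revisits of the one centered at $\mathbf{1}+\varepsilon m_{k-1}^{\varepsilon}$, so $\mathbf{X}_{t}$ either lies in that domain, in which case the diameter bound gives $|\mathbf{X}_{t}-(\mathbf{1}+\varepsilon m_{k-1}^{\varepsilon})|\leq\tfrac{1}{2}\sqrt{D}\,\varepsilon$; or it lies in the tunnel $T^{\varepsilon}$, in which case, letting $s$ be the last time in $[\tau_{k-1}^{\varepsilon},t]$ with $\mathbf{X}_{s}\notin T^{\varepsilon}$, continuity together with the preceding exclusion places $\mathbf{X}_{s}$ in the closure of the domain centered at $\mathbf{1}+\varepsilon m_{k-1}^{\varepsilon}$, and Proposition~\ref{prop: impossible to travel through long tunnels} applied on $[s,t]$ yields $|\mathbf{X}_{t}-\mathbf{X}_{s}|<33 D^{3/2}\varepsilon$, so altogether $|\mathbf{X}_{t}-(\mathbf{1}+\varepsilon m_{k-1}^{\varepsilon})|<\bigl(\tfrac{1}{2}\sqrt{D}+33 D^{3/2}\bigr)\varepsilon$.

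Specializing the last estimate to $t=\tau_{k}^{\varepsilon}$ and combining with the diameter bound applied at the freshly entered domain $m_{k}^{\varepsilon}$ yields $|\varepsilon m_{k}^{\varepsilon}-\varepsilon m_{k-1}^{\varepsilon}|<\bigl(\sqrt{D}+33 D^{3/2}\bigr)\varepsilon$; since $\mathbf{X}_{t}^{\varepsilon}$ lies on the segment joining these two consecutive centers, the same bound controls $|\mathbf{X}_{t}^{\varepsilon}-(\mathbf{1}+\varepsilon m_{k-1}^{\varepsilon})|$, and the triangle inequality then gives $|\mathbf{X}_{t}-\mathbf{X}_{t}^{\varepsilon}|\leq\bigl(\tfrac{3}{2}\sqrt{D}+66 D^{3/2}\bigr)\varepsilon\leq 68 D^{3/2}\varepsilon$, which is the required uniform bound. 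The convergence statement is then automatic: since $\mathbf{X}^{\varepsilon}$ uses the entry times of $\mathbf{X}$ as its parametrization, taking $\sigma=\mathrm{id}$ in (\ref{eq: parametrization-free metric}) shows $d([\mathbf{X}^{\varepsilon}],[\mathbf{X}])\leq 68 D^{3/2}\varepsilon\to 0$. The main technical point is verifying that the starting point $\mathbf{X}_{s}$ of every tunnel excursion inside $[\tau_{k-1}^{\varepsilon},\tau_{k}^{\varepsilon}]$ really does lie in the closure of the domain centered at $\mathbf{1}+\varepsilon m_{k-1}^{\varepsilon}$; this is where the convention that revisits of the current domain do not count in the definition of $\tau_{k}^{\varepsilon}$ is essential, and everything else is elementary triangle-inequality bookkeeping.
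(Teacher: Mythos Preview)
Your proof is correct and rests on the same core ingredient as the paper's, namely Proposition~\ref{prop: impossible to travel through long tunnels} combined with the elementary diameter bound $\tfrac{1}{2}\sqrt{D}\,\varepsilon$ for each domain in $\mathcal{C}^{\varepsilon}(g)$. The organization differs slightly: the paper first shows $|m_{k}^{\varepsilon}-m_{k-1}^{\varepsilon}|\leqslant 34D^{3/2}$ by a contradiction argument on $[t^{*},\tau_{k}^{\varepsilon}]$, and then runs a second contradiction argument using a ball of radius $33D^{3/2}\varepsilon$ about $\mathbf{X}_{t}$ to force $|\mathbf{X}_{t}-(\mathbf{1}+\varepsilon m_{k-1}^{\varepsilon})|\leqslant 68D^{3/2}\varepsilon$. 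You instead bound $|\mathbf{X}_{t}-(\mathbf{1}+\varepsilon m_{k-1}^{\varepsilon})|$ directly by locating the last exit from $C_{k-1}^{\varepsilon}$ before $t$ and applying the Proposition once; the bound on consecutive centers then drops out as the special case $t=\tau_{k}^{\varepsilon}$ (via a limit from the left, since $\mathbf{X}_{\tau_{k}^{\varepsilon}}$ itself lies in $C_{k}^{\varepsilon}$ rather than in $T^{\varepsilon}$). Your route is marginally more economical, avoiding the second ball-based contradiction, and yields the same constant.

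One small point of phrasing: what you call ``the last time $s$ with $\mathbf{X}_{s}\notin T^{\varepsilon}$'' is really the supremum of such times, and since $T^{\varepsilon}$ is closed this supremum actually satisfies $\mathbf{X}_{s}\in\partial C_{k-1}^{\varepsilon}\subset T^{\varepsilon}$; this is exactly what you need for $\mathrm{Im}(\mathbf{X}|_{[s,t]})\subset T^{\varepsilon}$, and your parenthetical about continuity placing $\mathbf{X}_{s}$ in the closure shows you are aware of it.
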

\begin{proof}
Consider the time interval $[\tau_{k-1}^{\varepsilon},\tau_{k}^{\varepsilon}].$
Let $C_{k-1}^{\varepsilon},C_{k}^{\varepsilon}$ be the domain in
$\mathcal{C}^{\varepsilon}(g)$ corresponding to the entry times $\tau_{k-1}^{\varepsilon},\tau_{k}^{\varepsilon}$
respectively. Define 
\[
t^{*}=\sup\{t\in[\tau_{k-1}^{\varepsilon},\tau_{k}^{\varepsilon}]:\ \mathbf{X}_{u}\in C_{k-1}^{\varepsilon}\}.
\]
If $|m_{k}^{\varepsilon}-m_{k-1}^{\varepsilon}|>34D^{3/2},$ then
we have 
\[
|\mathbf{X}_{t^{*}}-\mathbf{X}_{\tau_{k}^{\varepsilon}}|\geqslant34D^{\frac{3}{2}}\varepsilon-2\times\frac{\sqrt{D}}{2}\varepsilon\geqslant33D^{\frac{3}{2}}\varepsilon.
\]
Moreover, we know that $\mathrm{Im}(\mathbf{X}(|_{[t^{*},\tau_{k}^{\varepsilon}]})\subset T^{\varepsilon}.$
This contradicts Proposition \ref{prop: impossible to travel through long tunnels}.
Therefore, 
\begin{equation}
|m_{k}^{\varepsilon}-m_{k-1}^{\varepsilon}|\leqslant34D^{3/2}.\label{eq: neighboring visits cannot be too far}
\end{equation}

Let $t\in[\tau_{k-1}^{\varepsilon},\tau_{k}^{\varepsilon}].$ If $|\mathbf{X}_{t}-(\mathbf{1}+\varepsilon m_{k-1}^{\varepsilon})|>68D^{3/2}\varepsilon,$
from (\ref{eq: neighboring visits cannot be too far}) we know that
\[
|\mathbf{X}_{t}-(1+\varepsilon m_{k}^{\varepsilon})|\geqslant34D^{\frac{3}{2}}\varepsilon.
\]
Therefore, 
\[
\overline{B}(\mathbf{X}_{t},33D^{\frac{3}{2}}\varepsilon)\bigcap\left(\overline{C_{k-1}^{\varepsilon}}\bigcup\overline{C_{k}^{\varepsilon}}\right)=\emptyset.
\]
Define 
\[
t'=\inf\{t\in[t,\tau_{k}^{\varepsilon}]:\ \mathbf{X}_{t}\in\partial\overline{B}(\mathbf{X}_{t},33D^{\frac{3}{2}}\varepsilon)\}.
\]
It follows that $|\mathbf{X}_{t'}-\mathbf{X}_{t}|=33D^{3/2}\varepsilon$
and $\mathrm{Im}(\mathbf{X}|_{[t,t']})\subset T^{\varepsilon}.$ This
is again a contradiction to Proposition \ref{prop: impossible to travel through long tunnels}.
Therefore, 
\[
|\mathbf{X}_{t}-(\mathbf{1}+\varepsilon m_{k-1}^{\varepsilon})|\leqslant68D^{\frac{3}{2}}\varepsilon.
\]
The same argument shows that 
\[
|\mathbf{X}_{t}-(\mathbf{1}+\varepsilon m_{k}^{\varepsilon})|\leqslant68D^{\frac{3}{2}}\varepsilon.
\]

Finally, from the construction of $\mathbf{X}^{\varepsilon},$ we
obtain that 
\[
\sup_{t\in[\tau_{k-1}^{\varepsilon},\tau_{k}^{\varepsilon}]}|\mathbf{X}_{t}^{\varepsilon}-\mathbf{X}_{t}|\leqslant68D^{\frac{3}{2}}\varepsilon.
\]
The same argument shows that 
\[
\sup_{t\in[\tau_{L^{\varepsilon}}^{\varepsilon},1]}|\mathbf{X}_{t}^{\varepsilon}-\mathbf{X}_{t}|\leqslant34D^{\frac{3}{2}}\varepsilon.
\]
Therefore, (\ref{eq: the convergence}) holds.
\end{proof}

\begin{rem}
\label{rem: adjusting the approximation}From a technical point one
might observe that $\mathbf{X}^{\varepsilon}$ is actually not an
element of $W_{0}$ since it stays constant on $[\tau_{L^{\varepsilon}}^{\varepsilon},1]$. However, we can modify $\mathbf{X}^{\varepsilon}$ over $[\tau_{L^{\varepsilon}}^{\varepsilon},1]$
to be non-constant in the $\varepsilon$-scale in any arbitrary way,
so that the result of Theorem \ref{thm: the convergence} is still
valid. 
\end{rem}

\section{The Reconstruction: General Case}

Now we turn to the general case. Let $\mathbf{X}$ be a tree-reduced
weakly geometric $p$-rough path with signature $g.$ 

For each $N\geqslant\lfloor p\rfloor,$ the truncated signature path
of $\mathbf{X}$ up to degree $N$ is denoted by $X^{(N)}.$ If $I$
is a word over $\{1,\cdots,d\}$ with $|I|\leqslant N,$  $X_{t}^{I}$
denotes the $I$-th component of $X_{t}^{(N)}.$

The main ingredient here is to construct a degree $N(g)\geqslant\lfloor p\rfloor$
from the signature $g$, so that we can conclude: $|X_{t}^{I}|\leqslant1/2$
for all words $I$ with $|I|>N(g)$ and all $t\in[0,1].$ Starting
from this, the construction in the last section carries through on
the Euclidean space $E_{N(g)}$ without much difficulty. 

Now let $0<\delta<1/4$ be a parameter. Again $\delta$ should be
regarded as a discrete parameter, and only the direction $\delta\rightarrow0$
matters.

For each $N\geqslant\lfloor p\rfloor$, let $D_{N}$ be the dimension
of the Euclidean space $E_{N}.$ Define $A_{0}^{N}$ to be the set
of points $z=(z^{I})_{0\leqslant|I|\leqslant N}\in E_{N}$ such that
at least one of the components of $z$ is $\pm1/2$ and the rest are
of the form $z^{I}=n/2$ where $n\in\mathbb{Z}.$ Let $A^{N}=\{0\}\cup A_{0}^{N}.$

Given $z\in A^{N},$ let
\[
Q_{z}^{\delta;N}=\left\{ a\in E_{N}:\ |a^{I}-z^{I}|<\frac{1}{2}-\delta\ \mathrm{if}\ z^{I}\in\mathbb{Z}\ \mathrm{and}\ |a^{I}-z^{I}|<\frac{\delta}{2}\ \mathrm{otherwise}\right\} .
\]
Let $\mathcal{Q}^{\delta;N}$ be the geometric scheme on $E_{N}$
consisting of all the cubes $\mathbf{1}+Q_{z}^{\delta;N}$ ($z\in A^{N}$).
Apparently the closure of these cubes are mutually disjoint. A crucial
feature of $\mathcal{Q}^{\delta;N}$ for us is the following: 
\begin{equation}
\left\{ a\in E_{N}:\ |a^{I}|=\frac{1}{2}\ \mathrm{for\ some\ }I\right\} \subset\bigcup_{0<\delta<\frac{1}{4}}\bigcup_{z\in A_{0}^{N}}Q_{z}^{\delta;N}.\label{eq: feature of scheme in general case}
\end{equation}
Figure \ref{figure 2} illustrates the construction of $\mathcal{Q}^{\delta;N}$
when $D_{N}=2.$

\begin{figure} 
\begin{center} 
\includegraphics[scale=0.3]{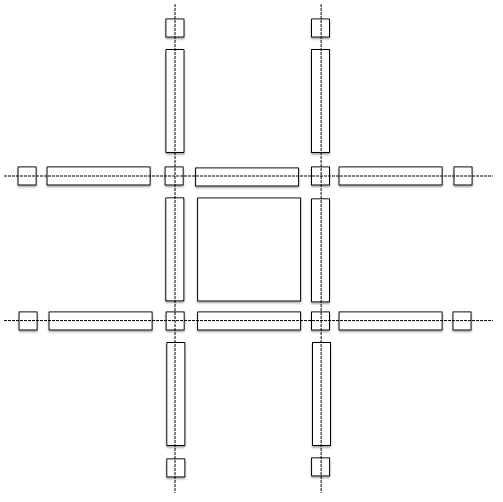}
\protect\caption{This figure illustrates the construction of $\mathcal{Q}^{\delta;N}$ when $D_N=2$.}\label{figure 2} 
\end{center}
\end{figure}

Define $L^{\delta;N}$ and the word 
\[
\mathbf{m}^{\delta;N}=(m_{0}^{\delta;N}=0,m_{1}^{\delta;N},\cdots,m_{L^{\delta;N}}^{\delta;N})
\]
corresponding to the ordered sequence of cubes in $\mathcal{Q}^{\delta;N}$
visited by the truncated signature path $X^{(N)}$ as before. 

As in Section 5.1, for $l\geqslant0$, let $\widetilde{\mathcal{W}}_{l}^{N}$
be the set of words $\mathbf{n}=(n_{0}=0,n_{1},\cdots n_{l})$ over
the alphabet $A^{N}$ such that $n_{k}\neq n_{k-1}$ for $1\leqslant k\leqslant l.$
After pre-specifying a countable family of generating one forms over
the geometric scheme $\mathcal{Q}^{\delta;N}$ on $E_{N},$ we define
$\chi(g;l,\mathbf{n})$ in the same way as (\ref{eq: defining chi}),
and also define 
\[
l^{\delta;N}=\sup\{l\geqslant0:\ \chi(g;l,\mathbf{n})=1\ \mathrm{for\ some\ }\mathbf{n}\in\widetilde{\mathcal{W}}_{l}^{N}\},
\]
where we set $l^{\delta;N}=0$ if such $\mathbf{n}$ does not exist
for every $l$. According to Proposition \ref{prop: existence of one forms}
(2), we know that $l^{\delta;N}<\infty.$ Let $\mathbf{n}^{\delta;N}$
be a word in $\widetilde{\mathcal{W}}_{l^{\delta;N}}^{N}$ such that
$\chi(g;l^{\delta;N},\mathbf{n}^{\delta;N})=1.$ Again $l^{\delta;N}$
and $\mathbf{n}^{\delta;N}$ are determined by the signature $g$
explicitly.

Now the key observation is the following: due to the decay of the homogeneous
signature path of high degree, when $N$ becomes large, the high dimensional
component of every letter in $\mathbf{n}^{\delta;N}$ becomes zero.
To be more precise, given $N<N'$, let 
\[
\pi_{N}^{N'}:\ E_{N'}\rightarrow\bigoplus_{i=N+1}^{N'}(\mathbb{R}^{d})^{\otimes i}
\]
be the projection onto the component of degree higher than $N.$ If
$\mathbf{n}$ is a word with letters in $E_{N'},$ $\pi_{N}^{N'}(\mathbf{n})$
is the word obtained by projecting every letter in $\mathbf{n}$ accordingly.
Then we have the following result.
\begin{lem}
\label{lem: picking out N}There exists some $N_{0}\geqslant\lfloor p\rfloor$
independent of $\delta,$ such that for any $N>N_{0}$ and $0<\delta<1/4$,
we have 
\begin{equation}
\pi_{N_{0}}^{N}(\mathbf{n}^{\delta;N})=(0,\cdots,0).\label{eq: zero projection}
\end{equation}
\end{lem}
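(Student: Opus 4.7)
The plan is to exploit the factorial decay of the iterated-integral components $X^{i}_{0,t}$ provided by Lyons' extension theorem in order to force the centres of every cube visited by the truncated signature path $X^{(N)}$ to have vanishing components in tensor degrees strictly greater than some threshold $N_{0}$. Once this is established, part (2) of Proposition \ref{prop: existence of one forms}, applied in the $E_{N}$-valued setting through the lifted rough path $\mathbf{Y}$ of Proposition \ref{prop: signature of signature}, will automatically force every letter of $\mathbf{n}^{\delta;N}$ to have the same property.

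The first and key step is the choice of $N_{0}$. The bound \eqref{eq: Lyons' extension} gives
\[
\bigl|X^{i}_{0,t}\bigr|\leqslant\frac{C_{p}\,\omega(0,1)^{i/p}}{(i/p)!},\quad t\in[0,1],\ i\geqslant1,
\]
and the tensor norm is chosen consistently so that each individual coordinate $|X^{I}_{0,t}|$ with $|I|=i$ is dominated by $|X^{i}_{0,t}|$. Since the factorial in the denominator beats the exponential numerator, there exists $N_{0}\geqslant\lfloor p\rfloor$, depending only on $p$ and the control of $\mathbf{X}$ (hence independent of both $\delta$ and $N$), such that $|X^{I}_{0,t}|<1/4$ for every $t\in[0,1]$ and every word $I$ with $|I|>N_{0}$.

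With $N_{0}$ fixed, I would next check that no cube $\mathbf{1}+Q^{\delta;N}_{z}$ whose centre satisfies $z^{I}\neq0$ for some $|I|>N_{0}$ can contain a point of $X^{(N)}_{0,[0,1]}$. If such a $z^{I}$ is a half-integer, then $|z^{I}|\geqslant1/2$ and the defining constraint $|X^{I}_{0,t}-z^{I}|<\delta/2<1/8$ would yield $|X^{I}_{0,t}|>3/8$, contradicting the bound above; if instead $z^{I}\in\mathbb{Z}\setminus\{0\}$, then $|z^{I}|\geqslant1$ and the constraint $|X^{I}_{0,t}-z^{I}|<1/2-\delta<1/2$ would yield $|X^{I}_{0,t}|>1/2$, again a contradiction. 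Hence every visited cube necessarily has $z^{I}=0$ whenever $|I|>N_{0}$.

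To conclude, observe that $\chi(g;l^{\delta;N},\mathbf{n}^{\delta;N})=1$ means that, for some admissible choice of generating one-forms supported on the closed cubes indexed by the letters of $\mathbf{n}^{\delta;N}$, the resulting extended signature of $\mathbf{Y}$ is non-zero. Part (2) of Proposition \ref{prop: existence of one forms} (whose proof transfers verbatim to the $E_{N}$-valued setting because it rests only on the localization property of rough integrals against compactly supported one-forms, together with the shuffle structure transported by Proposition \ref{prop: signature of signature}) then prevents any letter of $\mathbf{n}^{\delta;N}$ from being the centre of an unvisited cube. Combining with the previous step gives $\pi^{N}_{N_{0}}(\mathbf{n}^{\delta;N})=(0,\ldots,0)$, as required. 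The main conceptual content is the extraction of $N_{0}$ from \eqref{eq: Lyons' extension}; the only subtlety to monitor is the faithful transfer of the vanishing statement in Proposition \ref{prop: existence of one forms}(2) to an ambient Euclidean space of arbitrary (but finite) dimension $D_{N}$, which however is insensitive to that dimension.
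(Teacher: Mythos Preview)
Your proposal is correct and follows essentially the same approach as the paper: both use the factorial decay in Lyons' extension theorem to choose $N_{0}$ so that $|X^{I}_{0,t}|<1/4$ for $|I|>N_{0}$, then argue that any letter of $\mathbf{n}^{\delta;N}$ must index a visited cube (the paper cites \cite{BG2015}, Lemma 4.2 directly, while you invoke Proposition~\ref{prop: existence of one forms}(2), which rests on the same fact), and finally derive a contradiction from the geometry of $Q_{z}^{\delta;N}$ when $z^{I}\neq 0$. The only cosmetic difference is that the paper phrases the geometric step as a contradiction after assuming a bad letter exists, whereas you first rule out visits to bad cubes and then conclude; the numerical bounds ($3/8$ versus the paper's case split) are equivalent.
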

\begin{proof}
According to Lyons' extension theorem (Theorem \ref{thm: Lyons' extension}),
there exists some $N_{0}\geqslant\lfloor p\rfloor$ depending on the
path $\mathbf{X},$ such that $|X_{t}^{I}|\leqslant1/4$ for all $I$
with $|I|>N_{0}$ and all $t\in[0,1].$ If there exist $N>N_{0}$
and $0<\delta<1/4$ such that $\pi_{N_{0}}^{N}(\mathbf{n}^{\delta;N})\neq0,$
then there exists a letter $z\in\mathbf{n}^{\delta;N}$ such that
$z^{I}\neq0$ for some $I$ with $N_{0}<|I|\leqslant N.$ From the
construction of $\mathbf{n}^{\delta;N},$ we conclude that the truncated
signature path $X^{(N)}$ has visited the cube $Q_{z}^{\delta;N}$
(for this it is helpful to see \cite{BG2015}, Lemma 4.2). Therefore,
there exists some $t\in[0,1]$ such that 
\begin{eqnarray*}
|X_{t}^{I}| & \geqslant & |z^{I}|-|X_{t}^{I}-z^{I}|\geqslant\begin{cases}
1-\left(\frac{1}{2}-\delta\right)>\frac{1}{4}, & \mathrm{if\ }z^{I}\ \mathrm{is\ an\ integer};\\
\frac{1}{2}-\frac{\delta}{2}>\frac{1}{4}, & \mathrm{\mathrm{if}\ }z^{I}\ \mathrm{is\ a\ }\mbox{half-integer},
\end{cases}
\end{eqnarray*}
which is a contradiction.
\end{proof}

Here a big difference from the non-self-intersecting case is that
$l^{\delta;N}$ can be strictly less than $L^{\delta;N},$ and $\mathbf{n}^{\delta;N}$
can just be some proper subword of $\mathbf{m}^{\delta;N}$. Figure
\ref{figure 3} illustrates this possibility when $D_{N}=2.$ In this example, the reason is that any one form supported on the top long box integrates to zero as the underlying path cancels itself exactly inside this box. However,
from Proposition \ref{prop: existence of one forms} we know that
if $l^{\delta;N}=L^{\delta;N},$ then $\mathbf{n}^{\delta;N}$ must
coincide with $\mathbf{m}^{\delta;N}$. This is an important fact.

\begin{figure} 
\begin{center} 
\includegraphics[scale=0.45]{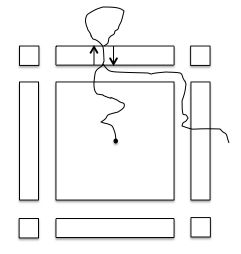}
\protect\caption{This figure illustrates the possibility that $\mathbf{n}^{\delta;N}$ is a proper subword of $\mathbf{m}^{\delta;N}$ when $D_N=2$. Here the underlying path is apparently tree-reduced. However, $\mathbf{n}^{\delta;N}=((0,0),(1/2,0))$ and $\mathbf{m}^{\delta;N}=((0,0),(0,1/2),(1/2,0))$.}\label{figure 3} 
\end{center}
\end{figure}

Let $N_{0}$ be given by Lemma \ref{lem: picking out N}. Given $N<N',$
let $l_{N}^{N'}$ be the lifting map 
\begin{eqnarray*}
l_{N}^{N'}:\ E_{N} & \rightarrow & E_{N'}=E_{N}\bigoplus\bigoplus_{i=N+1}^{N'}(\mathbb{R}^{d})^{\otimes i},\\
z & \mapsto & (z,0).
\end{eqnarray*}
If $\mathbf{n}$ is a word with letters in $E_{N},$ $l_{N}^{N'}(\mathbf{n})$
is the word obtained by lifting every letter in $\mathbf{n}$ accordingly.
The following result is important for us. It is where the tree-reduced
property comes in.
\begin{lem}
\label{lem: n=00003Dm when N large}For $N'>N\geqslant N_{0},$ we
have 
\[
\mathbf{m}^{\delta;N'}=l_{N}^{N'}(\mathbf{m}^{\delta;N})
\]
for all $0<\delta<1/4$. In other words, knowing the discrete route
of $X^{(N')}$ in the geometric scheme $\mathcal{Q}^{\delta;N'}$
is equivalent to knowing the one of $X^{(N)}$ in $\mathcal{Q}^{\delta;N}$.
Moreover, for each given $\delta$, when $N$ is large enough we have
$\mathbf{n}^{\delta;N}=\mathbf{m}^{\delta;N}.$\end{lem}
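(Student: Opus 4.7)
The proof separates into the two assertions. For the first identity, I will establish an order-preserving bijection between the cubes in $\mathcal{Q}^{\delta;N'}$ visited by $X^{(N')}$ and those in $\mathcal{Q}^{\delta;N}$ visited by $X^{(N)}$, implemented by $z\mapsto\tilde z$ where $z=l_{N}^{N'}(\tilde z)$. The essential input is Lemma \ref{lem: picking out N}, which gives $|X_{t}^{I}|\leqslant 1/4$ for every $|I|>N_{0}$ and $t\in[0,1]$. For $X^{(N')}$ to enter the cube $\mathbf{1}+Q_{z}^{\delta;N'}$ at some time $t$, each coordinate displacement $|X_{t}^{I}-z^{I}|$ must meet the cube's radius constraint ($<\delta/2$ if $z^{I}$ is a half-integer, $<1/2-\delta$ if $z^{I}$ is an integer). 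For indices of degree $|I|>N_{0}$, a nonzero half-integer $z^{I}$ would force $|X_{t}^{I}-z^{I}|\geqslant 1/4>\delta/2$ and a nonzero integer $z^{I}$ would force $|X_{t}^{I}-z^{I}|\geqslant 3/4>1/2-\delta$, both contradictions; hence $z^{I}=0$ for every $|I|>N$, so $z=l_{N}^{N'}(\tilde z)$ for some $\tilde z\in A^{N}$. Conversely, if $X^{(N)}$ enters $\mathbf{1}+Q_{\tilde z}^{\delta;N}$ at time $t$ then the first $N$ components of $X^{(N')}_{t}$ satisfy the constraints of $\mathbf{1}+Q_{l_{N}^{N'}(\tilde z)}^{\delta;N'}$ while the higher components automatically satisfy $|X_{t}^{I}|\leqslant 1/4<1/2-\delta$. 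The bijection preserves entry times, so the ordered visit words coincide after lifting.

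For the second claim, fix $\delta$ and set $L:=L^{\delta;N_{0}}$; by the first part $L^{\delta;N}=L$ for all $N\geqslant N_{0}$. Proposition \ref{prop: existence of one forms} (2), which does not require simpleness (as noted in the remark following its statement), implies that any extended signature of $\mathbf{Y}$ (the canonical lift of $X^{(N)}$ constructed in Proposition \ref{prop: signature of signature}) along a word in $\widetilde{\mathcal{W}}_{l}^{N}$ with $l\geqslant L$ vanishes unless $l=L$ and the word equals $\mathbf{m}^{\delta;N}$. Consequently $l^{\delta;N}\leqslant L$, and the conclusion $\mathbf{n}^{\delta;N}=\mathbf{m}^{\delta;N}$ reduces to producing, for $N$ large enough, some choice of one forms from the pre-specified countable dense family that yields a nonzero extended signature along $\mathbf{m}^{\delta;N}$.

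The main obstacle is that the construction of Proposition \ref{prop: existence of one forms} (1) requires the underlying rough path to be simple at each cube visit, a property which $X^{(N)}$ need not enjoy. I will bypass this by exploiting the tree-reducedness of $\mathbf{X}$, which forces the full signature path $S(\mathbf{X})_{0,\cdot}$ to be a simple path in $T((\mathbb{R}^{d}))$. For each of the $L+1$ visits, choose times $\tau_{k}<s_{k}<s'_{k}<t'_{k}<t_{k}<\tau_{k+1}$ with $X^{(N_{0})}|_{[s_{k},t_{k}]}$ contained in cube $m_{k}^{\delta;N_{0}}$; by Part 1 the same containment holds at every level $N\geqslant N_{0}$. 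Simpleness of $S(\mathbf{X})$ yields disjointness of the compact sets $S(\mathbf{X})_{0,[s_{k},s'_{k}]}$ and $S(\mathbf{X})_{0,[t'_{k},t_{k}]}$, disjointness of $S(\mathbf{X})_{0,[s'_{k},t'_{k}]}$ and $S(\mathbf{X})_{0,[\tau_{k},s_{k}]\cup[t_{k},\tau_{k+1}]}$, and pairwise disjointness of the images corresponding to distinct visits to the same cube, all inside $T((\mathbb{R}^{d}))$. Since $T((\mathbb{R}^{d}))$ carries the projective limit topology, each such pair of disjoint compacts becomes disjoint after projection to $E_{N}$ for some finite $N$ (a standard compactness argument: the decreasing family of closed sets $\{(x,y):\pi_{N}(x)=\pi_{N}(y)\}$ intersects to the diagonal, so its intersection with the compact product $K_{1}\times K_{2}$ must already be empty at some finite stage). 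Taking $N$ above the maximum of these finitely many thresholds, all the required disjointness holds in $E_{N}$, so that the construction in the proof of Proposition \ref{prop: existence of one forms} (1) carries over verbatim and produces $C^{\alpha}$-one forms $\Phi_{m}$ supported in the cubes of $\mathcal{Q}^{\delta;N}$ with $[\Phi_{m_{0}},\ldots,\Phi_{m_{L}}](X^{(N)})\neq 0$. A density argument, using continuity of rough integration in the one form under the $C^{\alpha}$-topology (\cite{FV2010}, Theorem 10.47), transfers the nonvanishing to one forms drawn from the pre-specified dense family, yielding $\chi(g;L,\mathbf{m}^{\delta;N})=1$ and therefore $\mathbf{n}^{\delta;N}=\mathbf{m}^{\delta;N}$.
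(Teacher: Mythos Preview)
Your proof is correct and follows essentially the same route as the paper: the first part is identical (forcing $z^{I}=0$ for $|I|>N$ via the decay $|X_t^I|\leqslant 1/4$), and the second part likewise fixes time points $s_k<s_k'<t_k'<t_k$ inside each visited cube and uses tree-reducedness plus compactness to obtain the separations needed to run the construction of Proposition~\ref{prop: existence of one forms}~(1) at a sufficiently high finite level $N$. The only cosmetic difference is that the paper packages the compactness step by introducing $\eta=\inf_k\{s_k'-s_k,\,t_k'-s_k',\,t_k-t_k'\}$ and invoking \cite{BGLY2014}, Lemma~19, to get the uniform statement $X^{(N)}_s\neq X^{(N)}_t$ whenever $|t-s|\geqslant\eta$, whereas you argue directly in the projective limit topology that each of the finitely many required pairs of disjoint compact images of $S(\mathbf{X})_{0,\cdot}$ already separates after projection to some $E_N$; both arguments express the same finite-intersection principle and yield the same conclusion.
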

\begin{proof}
Recall from the proof of Lemma \ref{lem: picking out N} that $|X_{t}^{I}|\leqslant1/4$
for $|I|>N_{0}$ and $t\in[0,1].$ Therefore, if $X_{t}^{(N')}\in Q_{z}^{\delta;N'}$
for some $z\in A^{N'},$ then for any $I$ with $N<|I|\leqslant N'$,
\[
|z^{I}|\leqslant|X_{t}^{I}|+|X_{t}^{I}-z^{I}|\leqslant\begin{cases}
\frac{1}{4}+\frac{1}{2}-\delta, & \mathrm{if}\ z^{I}\ \mathrm{is\ an\ integer};\\
\frac{1}{4}+\frac{\delta}{2}, & \mathrm{if\ }z^{I}\ \mathrm{is\ a\ }\mbox{half-integer}.
\end{cases}
\]
It follows that $z^{I}=0.$ Moreover, given $z\in A^{N},$ we have
$X_{t}^{(N)}\in Q_{z}^{\delta;N}$ if and only if $X_{t}^{(N')}\in Q_{l_{N}^{N'}(z)}^{\delta;N'}$
since 
\[
|X_{t}^{I}|=|X_{t}^{I}-0|\leqslant\frac{1}{4}<\frac{1}{2}-\delta
\]
for every $N<|I|\leqslant N'.$

On the other hand, for given $\delta,$ from the previous discussion
we know that $\mathbf{m}^{\delta;N}=l_{N_{0}}^{N}(\mathbf{m}^{\delta;N_{0}})$
for any $N>N_{0}.$ Define $\{\tau_{k}^{\delta;N}:\ 0\leqslant k\leqslant L^{\delta;N}\}$
to be the corresponding entry times. It is then easy to see that $\tau_{k}^{\delta;N}=\tau_{k}^{\delta;N_{0}}$
for every $0\leqslant k\leqslant L^{\delta;N}=L^{\delta;N_{0}}.$
For each $k$ we choose $\tau_{k}^{\delta;N_{0}}<s_{k}<s_{k}'<t_{k}'<t_{k}<\tau_{k+1}^{\delta;N_{0}}$
such that 
\[
\mathrm{Im}\left(X^{(N_{0})}|_{[s_{k},t_{k}]}\right)\subset Q_{m_{k}^{\delta;N_{0}}}^{\delta;N_{0}},
\]
where $\tau_{L^{\delta;N_{0}}+1}^{\delta;N_{0}}:=1.$ Note that for
$N>N_{0}$ we also have 
\[
\mathrm{Im}\left(X^{(N)}|_{[s_{k},t_{k}]}\right)\subset Q_{l_{N_{0}}^{N}(m_{k}^{\delta;N_{0}})}^{\delta;N}.
\]
Now let 
\[
\eta=\inf_{0\leqslant k\leqslant L^{\delta;N_{0}}}\{s_{k}'-s_{k},t_{k}'-s_{k}',t_{k}-t_{k}'\}.
\]
Since $\mathbf{X}$ is a tree-reduced weakly geometric $p$-rough
path, by a compactness argument (c.f. \cite{BGLY2014}, Lemma 19) we
know that there exists some $N_{1}>N_{0},$ such that $X_{s}^{(N)}\neq X_{t}^{(N)}$
for any $(s,t)\in[0,1]$ with $|t-s|\geqslant\eta$ and any $N>N_{1}$.
Keeping this separation property in mind, by applying exactly the
same argument as in the proof of Proposition \ref{prop: existence of one forms},
we know that for each $N>N_{1},$ the extended signature for $X^{(N)}$
along certain compactly supported $C^{\alpha}$-one forms over the
word $\mathbf{m}^{\delta;N}$ is nonzero. Therefore, together with
Proposition \ref{prop: existence of one forms} (2), we conclude that
$l^{\delta;N}=L^{\delta;N}$ and $\mathbf{n}^{\delta;N}=\mathbf{m}^{\delta;N}.$
\end{proof}

For each $\delta,$ we define 
\[
N(g;\delta)=\inf\{N\geqslant\lfloor p\rfloor:\ \pi_{N}^{N'}(\mathbf{n}^{\delta;N'})=(0,\cdots,0)\ \forall N'>N\},
\]
and define 
\begin{equation}\label{eqn: construction of N(g)}
N(g)=\sup_{0<\delta<\frac{1}{4}}N(g;\delta).
\end{equation}
From Lemma \ref{lem: picking out N} we have $N(g)\leqslant N_{0}$.
Note that $N(g)$ is reconstructed from the signature $g$.
Such $N(g)$ will serve our purpose. Namely, we have the following
result.
\begin{prop}
\label{prop: decay}For any $I$ with $|I|>N(g)$ and $t\in[0,1],$
we have $|X_{t}^{I}|\leqslant1/2$.\end{prop}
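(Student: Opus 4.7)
My plan is to argue by contradiction, using the covering feature (\ref{eq: feature of scheme in general case}) of the scheme $\mathcal{Q}^{\delta;N_0}$ to detect any coordinate of the truncated signature path that reaches level $\pm 1/2$, and then to propagate that detection to the signature-reconstructable word $\mathbf{n}^{\delta;N}$ via Lemma \ref{lem: n=00003Dm when N large} so as to contradict the construction of $N(g)$. Concretely, assume for contradiction that $|X_{t_0}^{I_0}|>1/2$ for some $t_0\in[0,1]$ and some multi-index $I_0$ with $|I_0|>N(g)$. The integer $N_0$ built in the proof of Lemma \ref{lem: picking out N} satisfies $|X_t^J|\leqslant 1/4$ for every $|J|>N_0$ and every $t\in[0,1]$, which forces $|I_0|\leqslant N_0$. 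Continuity of the truncated signature path together with $X_0^{I_0}=0$ then yields $s_0\in(0,t_0]$ at which $X_{s_0}^{I_0}=\pm 1/2$; without loss of generality $X_{s_0}^{I_0}=1/2$.

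Apply (\ref{eq: feature of scheme in general case}) to the shifted point $a:=X_{s_0}^{(N_0)}-\mathbf{1}\in E_{N_0}$, whose $I_0$-th coordinate equals $1/2$: this produces $\delta'\in(0,1/4)$ and $w\in A_0^{N_0}$ with $a\in Q_w^{\delta';N_0}$. The cube inequalities then pin down the value of $w^{I_0}$: no integer lies within $1/2-\delta'$ of $1/2$, so $w^{I_0}$ must be a half-integer, and $|1/2-w^{I_0}|<\delta'/2<1/8$ forces $w^{I_0}=1/2$. Since $X_0^{(N_0)}=\mathbf{1}$ sits at the center of the cube $\mathbf{1}+Q_0^{\delta';N_0}$ while $X_{s_0}^{(N_0)}\in\mathbf{1}+Q_w^{\delta';N_0}$ with $w\neq 0$, continuity guarantees a genuine entry of the latter cube during $(0,s_0]$, so $w$ appears as a letter of $\mathbf{m}^{\delta';N_0}$. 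By Lemma \ref{lem: n=00003Dm when N large}(2), for $N\geqslant N_0$ sufficiently large (depending on $\delta'$) one has $\mathbf{n}^{\delta';N}=\mathbf{m}^{\delta';N}$, and by part (1) of the same lemma $\mathbf{m}^{\delta';N}=l_{N_0}^{N}(\mathbf{m}^{\delta';N_0})$. Thus $l_{N_0}^{N}(w)\in\mathbf{n}^{\delta';N}$, and its coordinate at the index $I_0$ (with $|I_0|\leqslant N_0<N$) is still $1/2\neq 0$. Since $|I_0|>N(g)$, this yields $\pi_{N(g)}^{N}(\mathbf{n}^{\delta';N})\neq 0$, whence by definition $N(g;\delta')\geqslant |I_0|>N(g)$, contradicting (\ref{eqn: construction of N(g)}).

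The main obstacle is the middle step, not the detection at level $N_0$: in the tree-reduced but possibly self-intersecting setting the signature-reconstructable word $\mathbf{n}^{\delta;N}$ can be a strict subword of the geometric visiting word $\mathbf{m}^{\delta;N}$ (cf.\ Figure \ref{figure 3}), so the appearance of $w$ in $\mathbf{m}^{\delta';N_0}$ does not a priori translate into an appearance of (a lift of) $w$ in $\mathbf{n}^{\delta';N_0}$. Lemma \ref{lem: n=00003Dm when N large}(2) is precisely the device that bridges this gap, at the cost of passing to arbitrarily large degree $N$; this is also exactly why the construction of $N(g)$ in (\ref{eqn: construction of N(g)}) first requires the projection condition to hold for every $N'>N$ before taking the supremum over $\delta$. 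The secondary point requiring a careful coordinate-wise check is the identification $w^{I_0}=1/2$, but once the constraint $\delta'<1/4$ from (\ref{eq: feature of scheme in general case}) is in hand, this is immediate from the cube defining inequalities.
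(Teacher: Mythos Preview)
Your proof is correct and follows essentially the same route as the paper: assume a coordinate exceeds $1/2$, use continuity to hit level $1/2$, invoke the covering property (\ref{eq: feature of scheme in general case}) to place the truncated signature path in a cube whose center has a nonzero half-integer coordinate at index $I_0$, and then apply Lemma \ref{lem: n=00003Dm when N large} to transfer this letter from $\mathbf{m}^{\delta;\cdot}$ to $\mathbf{n}^{\delta;N}$ for large $N$, contradicting the definition of $N(g)$. The only cosmetic differences are that you first isolate $|I_0|\leqslant N_0$ and work at level $N_0$ (whereas the paper writes $N=|I|\vee N_0$, which equals $N_0$ anyway), and you are slightly more careful with the shift by $\mathbf{1}$ when applying (\ref{eq: feature of scheme in general case}).
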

\begin{proof}
Assume on the contrary that there exists some $I$ with $|I|>N(g)$
and some $t$ such that $|X_{t}^{I}|>1/2.$ Let $N=|I|\lor N_{0}.$
By continuity, there exists some $t'\in(0,t)$ such that $|X_{t'}^{I}|=1/2.$
Without loss of generality we assume that $X_{t'}^{I}=1/2.$ According
to (\ref{eq: feature of scheme in general case}), there exist some
$0<\delta<1/4,$ $z\in A_{0}^{N},$ such that $X_{t'}^{(N)}\in Q_{z}^{\delta;N}.$
Therefore, $z$ is a letter in $\mathbf{m}^{\delta;N}$. Moreover,
we have 
\[
\left|\frac{1}{2}-z^{I}\right|\leqslant\begin{cases}
\frac{1}{2}-\delta, & \mathrm{if}\ z^{I}\ \mathrm{is\ an\ integer};\\
\frac{\delta}{2}, & \mathrm{if}\ z^{I}\ \mathrm{is\ a\ }\mbox{half-integer},
\end{cases}
\]
which implies that $z^{I}$ must be $1/2.$ 

According to Lemma \ref{lem: n=00003Dm when N large}, we know that
$\mathbf{m}^{\delta;N'}=l_{N}^{N'}(\mathbf{m}^{\delta;N})$ and $\mathbf{n}^{\delta;N'}=\mathbf{m}^{\delta;N'}$
when $N'$ is large enough. Therefore, 
\[
\pi_{N(g)}^{N'}(\mathbf{n}^{\delta;N'})=\pi_{N(g)}^{N'}(\mathbf{m}^{\delta;N'}).
\]
But from the previous discussion we know that the right hand side of the above
identity contains a component $z^{I}=1/2,$ thus $\pi_{N(g)}^{N'}(\mathbf{n}^{\delta;N'})$
is nonzero. This contradicts the definition of $N(g).$
\end{proof}

The final step is to develop exactly the same construction as in the
non-self-intersecting case over the Euclidean space $E_{N(g)}.$ 

The only additional point is to see how to reconstruct the discrete
route of $X^{(N(g))}$ in a given geometric scheme in $E_{N(g)}$
from the signature $g$. Let $\mathcal{C}=\{C_{i}:\ i\in\mathcal{I}\}$
be a given geometric scheme and let $\mathbf{m}^{\mathcal{C}}$ be
the word over the alphabet $\mathcal{I}$ corresponding to the ordered
sequence of domains in $\mathcal{C}$ visited by $X^{(N(g))}$. As
before, in general we cannot expect that $\mathbf{m}^{\mathcal{C}}$
can be reconstructed from $g$ by computing extended signatures in
$E_{N(g)}$ only, and we need to lift the construction to higher degrees.
For each $N>N(g),$ let $\mathcal{C}^{N}$ be the geometric scheme
in $E_{N}$ defined by
\[
\mathcal{C}^{N}=\{C_{i}\times U_{N}:\ i\in\mathcal{I}\},
\]
where $U_{N}$ is the cube in $\oplus_{i=N(g)+1}^{N}(\mathbb{R}^{d})^{\otimes i}$
given by 
\[
U_{N}=\{(z^{I})_{N(g)<|I|\leqslant N}:\ |z^{I}|<1\ \mathrm{for\ every\ }I\}.
\]
From Proposition \ref{prop: decay}, we know that visiting $C_{i}$
by $X^{(N(g))}$ is equivalent to visiting $C_{i}\times U_{N}$ by
$X^{(N)}$, i.e. $\mathbf{m}^{\mathcal{C}}=\mathbf{m}^{\mathcal{C};N}$.
Moreover, the same argument as in the proof of Lemma \ref{lem: n=00003Dm when N large}
shows that $\mathbf{n}^{\mathcal{C};N}=\mathbf{m}^{\mathcal{C};N}$
when $N$ is large enough, where $\mathbf{n}^{\mathcal{C};N}$ is
defined in the same way as $\mathbf{n}^{\delta;N}$ before by computing
extended signatures in $E_{N}.$ Therefore, the word $\mathbf{n}^{\mathcal{C};N}$
is stable as $N\rightarrow\infty$ (here the alphabet is always $\mathcal{I}$
for every $N$), and we obtain that 
\begin{eqnarray*}
\mathbf{m}^{\mathcal{C}} & = & \lim_{N\rightarrow\infty}\mathbf{n}^{\mathcal{C};N}=\mathbf{n}^{\mathcal{C};N_{1}(g)},
\end{eqnarray*}
where 
\[
N_{1}(g)=\inf\{N>N(g):\ \mathbf{n}^{\mathcal{C};N'}=\mathbf{n}^{\mathcal{C};N}\ \forall N'>N\}<\infty.
\]
In particular, $\mathbf{m}^{\mathcal{C}}$ is determined by the signature
$g$ explicitly. 

Finally, if we look back into the discussion in Section 5.2 and 5.3,
once $\mathbf{m}^{\mathcal{C}}$ is known for any geometric scheme
$\mathcal{C}$ we are concerned with, the whole argument relies only
on continuity (not even the rough path nature of the underlying path).
In particular, by applying the same construction of geometric schemes
as in the non-self-intersecting case, we obtain an approximating sequence
$X^{\varepsilon}$ of polygonal paths for the truncated signature
path $X^{(N(g))}$ on $E_{N(g)}$. Let $\mathbf{X}^{\varepsilon}$
be the projection of $X^{\varepsilon}$ onto $E_{\lfloor p\rfloor}$
and let $[\mathbf{X}^{\varepsilon}]$ be the corresponding equivalence
class in $(W_{0}^{\sim},d).$ Then we have the following convergence
result. Remark \ref{rem: adjusting the approximation} applies in
the same way here.
\begin{thm}\label{thm: main reconstruction theorem}
For every $\varepsilon>0,$ we have
\[
\sup_{t\in[0,1]}\left|X_{t}^{\varepsilon}-X_{t}^{(N(g))}\right|_{E_{N(g)}}\leqslant68D_{N(g)}^{\frac{3}{2}}\varepsilon.
\]
In particular, 
\[
\lim_{\varepsilon\rightarrow0}[\mathbf{X}^{\varepsilon}]=[\mathbf{X}]\ \mathrm{in}\ (W_{0}^{\sim},d).
\]

\end{thm}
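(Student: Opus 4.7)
The plan is to transplant the construction of Section 5 onto the truncated signature path $X^{(N(g))}$, viewed as a continuous path in the Euclidean space $E_{N(g)}$, and then project back to $E_{\lfloor p\rfloor}$. The key observation, already implicit in the paragraphs preceding the statement, is that everything in Sections 5.2 and 5.3 beyond the reconstructability of discrete routes is purely a statement about arbitrary continuous paths and skeletal geometric schemes, and does not actually require the path to be simple.

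Concretely, one first repeats the construction of open and closed skeletons $K_j, C_j$, dilated cubes $H_z^{\delta;j}$, convex hull domains $C_z^{\delta;j}$, stable quantities $\mathfrak{s}_j$ and width parameters $\delta_1 > \cdots > \delta_{D_{N(g)}} > 0$ inside $E_{N(g)}$, with $D$ replaced by $D_{N(g)}$ throughout. The stability proof of Lemma 5.2 only invoked continuity of the underlying path, so $\mathfrak{s}_j$ remains a stable quantity for $X^{(N(g))}$ and each $\delta_i$ is well defined. By the lifting argument preceding the theorem (combining Proposition 6.3, Lemma 6.2 and the $N_1(g)$ construction), the discrete route of $X^{(N(g))}$ through any of the resulting schemes is reconstructed from $g$ alone; hence the total scheme $\mathcal{C}^{\varepsilon}(g)$ in $E_{N(g)}$, the word $\mathbf{m}^{\varepsilon} = (m_0^{\varepsilon}, \dots, m_{L^{\varepsilon}}^{\varepsilon})$ visited by $X^{(N(g))}$, and the associated polygonal path $X^{\varepsilon}$ (joining the centres $\mathbf{1} + \varepsilon m_k^{\varepsilon}$ in order and parametrised by the entry times $\tau_k^{\varepsilon}$) are all determined by $g$, at least modulo reparametrisation.

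Next, one reruns Proposition 5.4 verbatim for the continuous path $X^{(N(g))}$: the proof relies solely on continuity of the underlying path, the disjointness of the scheme (Lemma 5.3, which we transcribe mechanically), and the defining property of the $\delta_i$, so the same recursive descent through the skeletons shows that $X^{(N(g))}$ cannot travel a distance $\geq 33 D_{N(g)}^{3/2} \varepsilon$ while remaining entirely in the tunnel set $T^{\varepsilon} = \bigl(\bigcup_{C \in \mathcal{C}^{\varepsilon}(g)} C\bigr)^c$. Inserting this into the telescoping estimate of Theorem 5.3 (again with $D$ replaced by $D_{N(g)}$) yields the claimed bound
\[
\sup_{t \in [0,1]} \bigl| X_t^{\varepsilon} - X_t^{(N(g))} \bigr|_{E_{N(g)}} \leq 68 D_{N(g)}^{3/2}\, \varepsilon.
\]
For the convergence assertion, project onto $E_{\lfloor p\rfloor}$: by definition $\mathbf{X}^{\varepsilon}$ is the projection of $X^{\varepsilon}$ and $\mathbf{X}$ is the projection of $X^{(N(g))}$, and this projection is $1$-Lipschitz, so $\sup_t |\mathbf{X}^{\varepsilon}_t - \mathbf{X}_t| \leq 68 D_{N(g)}^{3/2}\varepsilon$. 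Choosing $\sigma = \mathrm{id}$ in the definition (3.1) of $d$ then gives $d([\mathbf{X}^{\varepsilon}], [\mathbf{X}]) \to 0$, while Remark 5.5 takes care of the terminal constancy of $\mathbf{X}^{\varepsilon}$ so that $[\mathbf{X}^{\varepsilon}]$ genuinely lives in $W_0^{\sim}$.

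The main obstacle I anticipate is auditing the proofs of Proposition 5.4 and Theorem 5.3 to confirm that the non-self-intersecting hypothesis on $\mathbf{X}$ is never quietly used in their geometric bookkeeping; all uses of simpleness there are funnelled through the reconstruction of discrete routes, which in the present setting has been replaced by the lifting device of Section 6. Once this audit is done, no genuinely new analytic input is required, and the estimate and convergence follow mechanically.
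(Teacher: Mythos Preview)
Your proposal is correct and follows essentially the same approach as the paper: the paper's argument for this theorem consists precisely of transplanting the Section 5 construction to $E_{N(g)}$, noting that the discrete route is recovered by the lifting device (Lemma \ref{lem: n=00003Dm when N large} and the $N_1(g)$ stabilisation), and then observing that Sections 5.2--5.3 (Lemma \ref{lem: proof of stability}, Proposition \ref{prop: impossible to travel through long tunnels}, Theorem \ref{thm: the convergence}) rely only on continuity of the underlying path, with Remark \ref{rem: adjusting the approximation} handling the terminal constancy. Your audit point is exactly the paper's own justification, and the projection step to $E_{\lfloor p\rfloor}$ is likewise identical.
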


It is worthwhile to point out that a direct consequence of Theorem \ref{thm: main reconstruction theorem} is the following uniqueness result for the signature of a rough path. 

\begin{cor}\label{cor: uniqueness of simple signature paths}
Let $\mathbf{X},\mathbf{Y}$ be two tree-reduced weakly geometric $p$-rough paths in the sense that their signature paths $\mathbb{X},\mathbb{Y}$ are both simple. Then $\mathbb{X}$ and $\mathbb{Y}$ are equal up to reparametrization if and only if $S(\mathbf{X})_{0,1}=S(\mathbf{Y})_{0,1}$. In particular, in this case $\mathbf{X}$ and $\mathbf{Y}$ are equal up to reparametrization.\end{cor}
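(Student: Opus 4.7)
The strategy is to obtain the corollary as an essentially free consequence of Theorem \ref{thm: main reconstruction theorem}, exploiting the fact that the entire reconstruction procedure takes the signature $g \in \mathfrak{S}_p$ as its \emph{only} input.

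The ``only if'' direction is immediate: if $\mathbb{X}_t = \mathbb{Y}_{\sigma(t)}$ for some $\sigma \in \mathcal{R}$, then evaluating at $t=1$ gives $S(\mathbf{X})_{0,1} = \mathbb{X}_1 = \mathbb{Y}_{\sigma(1)} = \mathbb{Y}_1 = S(\mathbf{Y})_{0,1}$, where we use $\sigma(1)=1$.

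For the ``if'' direction, suppose $S(\mathbf{X})_{0,1} = S(\mathbf{Y})_{0,1} = g$. Simplicity of the signature paths $\mathbb{X}, \mathbb{Y}$ means, by the discussion after Theorem \ref{thm: uniqueness result}, that $\mathbf{X}$ and $\mathbf{Y}$ are both tree-reduced, so Theorem \ref{thm: main reconstruction theorem} applies to each. The key observation is that the degree $N(g)$ defined in (\ref{eqn: construction of N(g)}), the stable quantities $\mathfrak{s}_i$, the widths $\delta_i$, the geometric scheme $\mathcal{C}^{\varepsilon}(g)$, and the resulting polygonal approximation $\mathbf{X}^{\varepsilon}$ are all constructed purely from $g$. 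Therefore applying Theorem \ref{thm: main reconstruction theorem} to $\mathbf{X}$ and to $\mathbf{Y}$ yields a \emph{single} sequence $\{[\mathbf{X}^{\varepsilon}]\}_{\varepsilon > 0}$ in $(W_0^\sim, d)$ which converges simultaneously to $[\mathbf{X}]$ and to $[\mathbf{Y}]$. Since $d$ is a genuine metric on $W_0^\sim$ (as established in \cite{BG2015}), limits are unique and we conclude $[\mathbf{X}] = [\mathbf{Y}]$. This is precisely the ``in particular'' statement that $\mathbf{X}$ and $\mathbf{Y}$ are equal up to reparametrization.

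It remains to lift this equality from the truncated paths $\mathbf{X}, \mathbf{Y}$ to the full signature paths $\mathbb{X}, \mathbb{Y}$. Suppose $\mathbf{X}_t = \mathbf{Y}_{\sigma(t)}$ for some $\sigma \in \mathcal{R}$. By the uniqueness in Lyons' extension theorem (Theorem \ref{thm: Lyons' extension}), the extension map is equivariant under reparametrization, so $\mathbb{X}_t = S(\mathbf{X})_{0,t} = S(\mathbf{Y})_{0,\sigma(t)} = \mathbb{Y}_{\sigma(t)}$ for every $t \in [0,1]$, which completes the proof. I do not anticipate a genuine technical obstacle; the one point requiring care is to verify that every ingredient entering the construction of $\mathbf{X}^{\varepsilon}$ depends only on $g$ and not on the underlying path being reconstructed, which is in fact explicit throughout Sections~5 and~6.
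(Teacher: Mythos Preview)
Your argument is correct, and it is arguably cleaner than the paper's own proof, but the two routes differ in how the passage from the truncated level to the full signature path is carried out.

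You work entirely at level $\lfloor p\rfloor$: since the equivalence class $[\mathbf{X}^{\varepsilon}]\in W_0^{\sim}$ is manufactured from $g$ alone, Theorem~\ref{thm: main reconstruction theorem} forces $[\mathbf{X}]=[\mathbf{Y}]$ by uniqueness of limits in the metric space $(W_0^{\sim},d)$, and you then lift the single reparametrization $\sigma$ to the full signature paths via the reparametrization-equivariance of Lyons' extension. The paper instead observes that the reconstruction in Section~6 produces $X^{(N)}$ up to reparametrization for \emph{every} $N>N(g)$ (not only $N=N(g)$), so that $X^{(N)}\sim Y^{(N)}$ for each such $N$; it then runs a compactness argument on the reparametrizing times $s_N$ to pass to the limit $N\to\infty$ and conclude $\mathrm{Im}(\mathbb{X})=\mathrm{Im}(\mathbb{Y})$, relying implicitly on the standard fact that two simple arcs with the same image and endpoints are reparametrizations of each other.

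Your route avoids the compactness step and the appeal to simple-arc topology, at the cost of invoking two external facts: that $d$ is positive-definite on $W_0^{\sim}$ (from \cite{BG2015}) and that Lyons' extension commutes with reparametrization (immediate from uniqueness in Theorem~\ref{thm: Lyons' extension}). The paper's route is more self-contained within the machinery of Sections~5--6 and makes visible that the reconstruction actually recovers every truncation level $N>N(g)$, which is slightly more than Theorem~\ref{thm: main reconstruction theorem} states as written. Both arguments are sound.
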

\begin{proof}
Suppose that $S(\mathbf{X})_{0,1}=S(\mathbf{Y})_{0,1}=g$. It suffices to show that $\mathbb{X}$ and $\mathbb{Y}$ have the same image.

From the proof of Theorem \ref{thm: main reconstruction theorem}, we know that for every $N>N(g)$ where $N(g)$ is defined by (\ref{eqn: construction of N(g)}), as a trajectory on $E_N$, $X^{(N)}$ (modulo reparametrization) can be reconstructed from $g$, and similarly for $Y^{(N)}$. Therefore, $X^{(N)}$ and $Y^{(N)}$ are equal up to reparametrization.

Now fix $t\in [0,1]$. It follows that for every $N>N(g)$, there exists $s_N\in[0,1]$ such that $X^{(N)}_t=Y^{(N)}_{s_N}$. From compactness we may assume without loss of generality that $s_N\rightarrow s\in[0,1]$ as $N\rightarrow\infty$. By projection we conclude that $$X^{(N)}_t=Y^{(N)}_{s_{N'}}$$ for every $N'>N>N(g)$. By sending $N'\rightarrow\infty$, we obtain that $X^{(N)}_t=Y^{(N)}_s$ for every given $N>N(g)$. This implies that $$\mathbb{X}_t=\mathbb{Y}_s\in\mathrm{Im}(\mathbb{Y}).$$
\end{proof}

\begin{rem}
Corollary \ref{cor: uniqueness of simple signature paths} does not cover the general uniqueness result in \cite{BGLY2014} (c.f. Theorem \ref{thm: uniqueness result}) as the signature is not able to detect any tree-like pieces of the underlying rough path. However, Corollary \ref{cor: uniqueness of simple signature paths} is indeed the key ingredient in proving the general uniqueness result (the necessity part: trivial signature implies being tree-like) as it immediately leads to a canonical real tree structure on the signature group $\mathfrak{S}_p$. The underlying rough path is then realized as a continuous loop in the real tree $\mathfrak{S}_p$. 
\end{rem}

\section{Final Remarks}

We give a few remarks in the following to conclude the present
paper.

1. In summary, the general idea of our reconstruction consists of
two parts: the signature determines the discrete route in \textit{any
}given geometric scheme, and the discrete route gives back our underlying
path by refining the geometric scheme in a way \textit{determined
by the signature. }The first part is an easy consequence of the algebraic
structure of signature, while the second part is the key ingredient of
the present work. If we look back into the discussion, it is
not hard to see that the development of the second part relies only
on the continuity of our underlying path. In other words, if the discrete
route of a continuous path is known for any given geometric scheme of the type in Section 5.1, our reconstruction gives back the
original path. This works for all continuous paths without any regularity
assumption.

The reason we cannot expect that the discrete route would yield
the underlying path when the geometric scheme is fine enough and the second part is necessary is that there are positive gaps among the domains
in our geometric scheme. Such construction is essential in our discussion. Removing the gaps ruins the whole argument
as the discrete route is no longer well-defined. On the other hand, one might ask
if we could use finitely many different geometric schemes to cover all the gaps
so that our argument could be simplified. But it seems difficult to
achieve because there is not a canonical way to embed two discrete routes
arising from two different geometric schemes into a single word. Nevertheless,
we expect that there might be some way to do this and our construction
is certainly not the only possibility.

2. We expect that a recursive construction of geometric schemes in
Section 5 is not necessary, and there might be a way to construct
a single geometric scheme $\mathcal{C}^{\delta}$ in one go with
one parameter $\delta$ only (recall that $\varepsilon$ is fixed).
This requires a more difficult construction of a stable quantity as
we cannot expect that every domain in $\mathcal{C}^{\delta}$ is expanding
as $\delta\rightarrow0$ (it is hard to see whether the quantity we
have constructed in Section 5.2 is stable in this case).

3. The reconstruction problem is essentially (countably) infinite
dimensional. As we are looking for a universal way to treat every
path equally, we should expect that all information of signature as
an infinite dimensional object is to be used. From a practical side,
one might wonder if in each $\varepsilon$-step, we could truncate
our construction to a situation involving only finite information
(depending on $\varepsilon$) and convergence still hold. We expect
that this is possible although we do not pursue along this direction
in our work and we are more focused on the theoretical side. 

However, we want to emphasize that even with the possibility of proving
convergence, we expect that any attempt to obtain a quantitative error
estimate for a \textit{finite} scheme reconstruction should involve
an \textit{a priori control} on the behavior of the class of paths
we are considering (for instance, an a prior uniform regularity estimate
or an a priori  control on certain geometric property). More precisely, it is unlikely
to expect the existence of a quantitative result of the following
type:
\begin{equation}
d(\xi^{\varepsilon}(g),[\mathbf{X}])\leqslant C(\varepsilon,g),\ \forall\varepsilon>0,g\in\mathfrak{S}_{p},\label{eq: non-existence of universal error estimates}
\end{equation}
where $\xi^{\varepsilon}(g)\in W_{0}^{\sim}$ is constructed by using
finitely many components of $g$ through finitely many steps of computation,
and $C(\varepsilon,g)$ is an explicit function depending only on $\varepsilon$
and $g$ such that 
\[
\lim_{\varepsilon\rightarrow0}C(\varepsilon,g)=0,\ \forall g\in\mathfrak{S}_{p}.
\]
This is the nature of the reconstruction problem. To expect an error
estimate like (\ref{eq: non-existence of universal error estimates})
for a finite scheme reconstruction, we believe that the restriction
to an a priori subset of $\mathfrak{S}_{p}$ is necessary. This leaves an
interesting question from the computational point of view, for instance
within our reconstruction method. But this is beyond the scope of the
present paper.

\section*{Acknowledgement} 
The author wishes to thank Professor Terry Lyons for his valuable suggestions on the present work. The author is supported  by the ERC grant (Grant Agreement No.291244 Esig).

\end{document}